\newcommand{\braket}[2]{\langle #1 , #2 \rangle}
\newcommand{\eps}{\varepsilon}
\def\beq{\begin{equation}}
\def\eeq{\end{equation}}
\def\bq{\begin{quote}}
\def\eq{\end{quote}}
\def\ben{\begin{enumerate}}
\def\een{\end{enumerate}}
\def\bit{\begin{itemize}}
\def\eit{\end{itemize}}
\def\ra{\rightarrow}
\def\lb{\left(}
\def\rb{\right)}
\def\lset{\lbrace}
\def\rset{\rbrace}
\def\r|{\right|}
\def\lbr{\left[}
\def\rbr{\right]}
\def\ident{\textnormal{id}}
\newcommand\R{\mathbbm{R}}
\newcommand\N{\mathbbm{N}}
\newcommand{\scalar}[2]{\langle#1,#2\rangle}
\DeclareMathOperator{\Tr}{Tr}
\DeclareMathOperator{\mdim}{dim}
\theoremstyle{plain}
\newtheorem{thm}{Theorem}[section]
\newtheorem{lem}[thm]{Lemma}
\newtheorem{cor}[thm]{Corollary}
\newtheorem{prop}[thm]{Proposition}
\newtheorem{defn}[thm]{Definition}
\theoremstyle{definition}
\newtheorem{example}{Example}
\newtheorem{rem}{Remark}[section]
\newcommand{\iy}{\infty}
\newcommand{\e}{\varepsilon}
\renewcommand{\leq}{\leqslant}
\renewcommand{\geq}{\geqslant}
\newcommand\name{tensor radius }
\newcommand\names{tensor radii }
\newcommand\Names{Tensor radii }
\begin{document}

\title{{Asymptotic Tensor Powers of Banach Spaces}}

\author{Guillaume Aubrun}
\address{\small{Institut Camille Jordan, Universit\'{e} Claude Bernard Lyon 1, 43 boulevard du 11 novembre 1918, 69622 Villeurbanne cedex, France}}
\email{aubrun@math.univ-lyon1.fr}
\author{Alexander M\"uller-Hermes}
\address{\small{Institut Camille Jordan, Universit\'{e} Claude Bernard Lyon 1, 43 boulevard du 11 novembre 1918, 69622 Villeurbanne cedex, France}}
\address{\small{Department of Mathematics, University of Oslo, P.O. box 1053, Blindern, 0316 Oslo, Norway}}
\email{muellerh@math.uio.no, muellerh@posteo.net}

\maketitle
\date{\today}
\begin{abstract}
We study the asymptotic behaviour of large tensor powers of normed spaces and of operators between them. We define the \name of a finite-dimensional normed space $X$ as the limit of the sequence $A_k^{1/k}$, where $A_k$ is the equivalence constant between the projective and injective norms on $X^{\otimes k}$. We show that Euclidean spaces are characterized by the property that their \name equals their dimension. Moreover, we compute the \name for spaces with enough symmetries, such as the spaces $\ell_p^n$. We also define the \name of an operator $T$ as the limit of the sequence $B_k^{1/k}$, where $B_k$ is the injective-to-projective norm of $T^{\otimes k}$. We show that the \name of an operator whose domain or range is Euclidean is equal to its nuclear norm, and give some evidence that this property might characterize Euclidean spaces.
\end{abstract}

\section{Introduction}
For a real or complex Banach space $X$ we denote its dual space by $X^*$ and their respective unit balls by $B_X$ and $B_{X^*}$. Two natural norms can be defined on the algebraic tensor product $X^{\otimes k}$: The \emph{injective tensor norm} is given by
\[
\|z\|_{\e_k(X)} = \sup\Big\lset |(\lambda_1\otimes \cdots \otimes \lambda_k)(z)| ~:~\lambda_1,\ldots ,\lambda_k\in B_{X^*}\Big\rset,
\]
for $z\in X^{\otimes k}$, and the \emph{projective tensor norm} by
\[
\|z\|_{\pi_k(X)} = \inf\Big\lset \sum^n_{i=1} \|x^{(1)}_i\|_X\cdots \|x^{(k)}_i\|_X ~:~n\in\N,\ z=\sum^n_{i=1} x^{(1)}_{i}\otimes\ldots \otimes x^{(k)}_{i}\Big\rset .
\]
When $X$ is finite-dimensional, the discrepancy between these norms is governed by the parameter
\[ \rho_k(X) = \lb \sup_{z\in X^{\otimes k}}\frac{\|z\|_{\pi_k(X)}}{\|z\|_{\e_k(X)}}\rb^{\frac{1}{k}}.
\]
More generally, given a linear operator $T: X \to Y$ between finite-dimensional normed spaces, we consider
\begin{equation} \label{eq:def-tau} \tau_k(T) = \| T^{\otimes k} \|^{1/k}_{\e_k(X) \to \pi_k(Y)} = \lb \sup_{z\in X^{\otimes k}}\frac{\|T^{\otimes k}z\|_{\pi_k(Y)}}{\|z\|_{\e_k(X)}}\rb^{\frac{1}{k}},
\end{equation}
such that $\rho_k(X)=\tau_k(\mathrm{id}_X)$. A standard subadditivity argument (see Lemma \ref{lem:subadditivity} below) shows the existence of the limits
\[ \rho_{\infty}(X)= \lim_{k \to \infty} \rho_k(X), \ \ 
\tau_{\infty}(T) = \lim_{k \to \infty} \tau_k(T). \]
We call these limits the \emph{\name of the space $X$} and the \emph{\name of the operator $T$}, respectively, by analogy with the spectral radius formula. The \names are motivated by questions in quantum information theory where they have recently been applied by the authors~\cite{paperA}. In this article we aim to understand the properties of \names and we begin by stating our main results.

\subsection*{Main results on \names of normed spaces:} We show that the \name is maximal precisely for Euclidean spaces:
\begin{thm}
If $X$ is an $n$-dimensional normed space, then $\sqrt{n} \leq \rho_\infty(X) \leq n$. Moreover, $\rho_{\infty}(X)=n$ if and only if $X$ is Euclidean.
\label{thm:Main1}
\end{thm}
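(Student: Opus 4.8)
The statement breaks into four pieces: the two bounds $\sqrt n\le\rho_\infty(X)\le n$, the fact that Euclidean spaces saturate the upper bound, and the rigidity statement that only Euclidean spaces do. I would handle them in that order. The first three are soft; the rigidity direction is where the real work lies.

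\textbf{The upper bound.} Fix an Auerbach basis $e_1,\dots,e_n$ of $X$ with dual basis $e_1^*,\dots,e_n^*$, so $\|e_i\|_X=\|e_i^*\|_{X^*}=1$. Writing $z\in X^{\otimes k}$ as $z=\sum_{\mathbf i}z_{\mathbf i}\,e_{i_1}\otimes\cdots\otimes e_{i_k}$ with $z_{\mathbf i}=(e_{i_1}^*\otimes\cdots\otimes e_{i_k}^*)(z)$, one has $|z_{\mathbf i}|\le\|z\|_{\e_k(X)}$ directly from the definition of the injective norm, while the displayed monomial expansion is a legal projective decomposition, so $\|z\|_{\pi_k(X)}\le\sum_{\mathbf i}|z_{\mathbf i}|\le n^k\|z\|_{\e_k(X)}$. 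Hence $\rho_k(X)\le n$ for every $k$, and therefore $\rho_\infty(X)\le n$.

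\textbf{Euclidean spaces and the general lower bound.} I would first compute $\rho_\infty(\ell_2^n)=n$. The inequality $\le n$ is the previous paragraph. For the reverse, recall that $(\ell_2^n)^{\otimes k}$ carries the Hilbert--Schmidt inner product, for which $\e_k(\ell_2^n)$ and $\pi_k(\ell_2^n)$ are in duality. Choosing $z\in(\ell_2^n)^{\otimes k}$ to be a sign tensor one has $\|z\|_{\mathrm{HS}}^2=n^k$, whereas a standard $\e$-net argument gives $\|z\|_{\e_k(\ell_2^n)}\le C\sqrt{nk\log n}$ with high probability; pairing $z$ with itself yields $\|z\|_{\pi_k(\ell_2^n)}\ge\|z\|_{\mathrm{HS}}^2/\|z\|_{\e_k(\ell_2^n)}$, hence $\rho_k(\ell_2^n)^k\ge n^k/(C\sqrt{nk\log n})^2$, so $\rho_k(\ell_2^n)\to n$ and (using that $\rho_\infty=\sup_k\rho_k$, which comes out of the proof of Lemma~\ref{lem:subadditivity} since $\e$ and $\pi$ are cross norms and tensor products of near-optimal tensors are near-optimal) $\rho_\infty(\ell_2^n)=n$. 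For a general $n$-dimensional $X$, John's theorem gives an isomorphism $u\colon\ell_2^n\to X$ with $\|u\|\,\|u^{-1}\|=d_{\mathrm{BM}}(X,\ell_2^n)\le\sqrt n$. Factoring $\mathrm{id}_{\ell_2^n}=u^{-1}\circ\mathrm{id}_X\circ u$ and using that $\tau_\infty$ is submultiplicative under pre- and post-composition with bounded operators (because $S^{\otimes k}$ has norm $\|S\|^k$ as a map $\pi_k\to\pi_k$, and likewise $R^{\otimes k}$ as a map $\e_k\to\e_k$), one obtains $n=\rho_\infty(\ell_2^n)\le\|u^{-1}\|\,\rho_\infty(X)\,\|u\|$, i.e. $\rho_\infty(X)\ge n/d_{\mathrm{BM}}(X,\ell_2^n)\ge\sqrt n$, with equality $\rho_\infty(X)=n$ whenever $X$ is Euclidean.

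\textbf{Rigidity, and the main obstacle.} It remains to show that $\rho_\infty(X)=n$ forces $X$ to be Euclidean; equivalently, that $d_{\mathrm{BM}}(X,\ell_2^n)>1$ implies $\rho_\infty(X)<n$. The inequality of the previous paragraph controls the deficit $n-\rho_\infty(X)$ from the wrong side: what is needed is an \emph{upper} bound on $\rho_\infty(X)$ that drops strictly below $n$ as soon as $B_X$ is not an ellipsoid, and the Auerbach estimate never sees this since it returns $n$ for every $X$ — the gain must come from the large-$k$ asymptotics. One natural route is to redo the random-tensor computation in an Auerbach basis for general $X$: for a random sign tensor $z$ the quantity $\|z\|_{\e_k(X)}$ concentrates around a $k$-th power of a Gaussian-mean-width functional of $B_{X^*}$, while pairing against a random $w$ again gives $\|z\|_{\pi_k(X)}\approx n^k$, so that $\rho_\infty(X)$ is essentially $n^2$ divided by the product of suitable mean widths of $B_X$ and $B_{X^*}$, which an Urysohn/isoperimetric-type inequality bounds by $n$ with equality precisely for ellipsoids. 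An alternative route is to prove that $\rho_\infty$ is monotone under passing to subspaces, so that $\rho_\infty(X)=\dim X$ descends to every two-dimensional subspace, to classify the planes with $\rho_\infty=2$ (only $\ell_2^2$), and to conclude by the parallelogram law. In the first route the hard part is controlling the sub-exponential-in-$k$ error terms uniformly and pinning down the exact extremal body; in the second, it is establishing the subspace monotonicity, which is delicate because restricting to a subspace keeps the injective norm fixed but can only raise the projective norm. This rigidity estimate is the crux of the theorem, and everything else is routine by comparison.
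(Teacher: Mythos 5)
The first three quarters of your argument (the Auerbach upper bound, $\rho_\infty(\ell_2^n)=n$ via random tensors and the Hilbert--Schmidt duality $\|z\|_{\mathrm{HS}}^2\leq\|z\|_{\pi_k}\|z\|_{\e_k}$, and the lower bound $\rho_\infty(X)\geq n/\mathrm{d}(X,\ell_2^n)\geq\sqrt n$ via John's theorem) are correct and essentially identical to the paper's proof. But the rigidity direction --- $\rho_\infty(X)=n$ forces $X$ Euclidean --- is not proved: you correctly identify it as the crux and then offer only two speculative routes, neither carried out. The mean-width/Urysohn route is a heuristic; the claim that $\rho_\infty(X)$ is ``essentially $n^2$ divided by a product of mean widths'' with ellipsoids as the exact equality case is not established and does not obviously survive the sub-exponential error terms you mention (note also that the extremal quantity cannot literally be mean width, since e.g.\ $\rho_\infty(\ell_1^n)=\sqrt n$ must come out right). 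The subspace-monotonicity route fails for the reason you yourself flag: passing to a subspace fixes the injective norm but raises the projective norm, so the inequality goes the wrong way for $\rho_\infty(X')\leq\rho_\infty(X)$; and even granting it, classifying the planes with $\rho_\infty=2$ is just the rigidity problem in dimension two, not an easier statement.

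The missing idea is an upper bound on $\rho_\infty$ that, unlike the Auerbach estimate, is sensitive to the geometry of $B_X$. The paper gets this from its factorization lemma (Lemma \ref{lem:UpperBound}): for any factorization of $\ident_X$ through $\ell_2^n$, in particular the trivial one $Q_1=Q_2=\ident_{\R^n}$ after placing the Loewner ellipsoid of $X$ at $B_2^n$, one has
\[
\rho_\infty(X)\;\leq\;\|\ident_{\R^n}\|^{1/2}_{N(X\to X^*)}\,\|\ident_{\R^n}\|^{1/2}_{N(X^*\to X)}.
\]
The second factor equals $\sqrt n$ by the John decomposition of the identity (equation \eqref{eq:nuclear-norm-john}). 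For the first factor, if $X$ is not Euclidean there is a vector $x_1$ with $\|x_1\|_2=1$ and $\|x_1\|_{X^*}<1$; completing it to an orthonormal basis $(x_i)$ and writing $\ident_{\R^n}=\sum_i x_i\langle x_i,\cdot\rangle$ gives $\|\ident_{\R^n}\|_{N(X\to X^*)}\leq\sum_i\|x_i\|_{X^*}^2<n$, hence $\rho_\infty(X)<n$. This two-line nuclear-norm computation is what replaces both of your proposed routes; without it (or something equivalent) the theorem is not proved.
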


We will state the proof of Theorem \ref{thm:Main1} in Section \ref{sec:proof-th1}. For a large class of normed spaces, including the spaces $\ell_p^n$ and their noncommutative analogues, we compute the \name as a function of the Banach--Mazur distance (see Section~ \ref{subsection:Bm-distance}) to the Euclidean space. Our argument applies to normed spaces with proportional John and Loewner ellipsoids (see Section~\ref{subsection:john}), which includes all normed spaces with enough symmetries (see Section \ref{subsec:enough-symmetries}). 

\begin{thm} \label{thm:enough-symmetries}
If $X$ is an $n$-dimensional normed space with proportional John and Loewner ellipsoids, then 
\[\rho_{\infty}(X) = \frac{n}{\mathrm{d}(X,\ell_2^n)} .\]
\end{thm}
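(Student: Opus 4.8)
The plan is to prove the two bounds $\rho_\infty(X)\ge n/\mathrm{d}(X,\ell_2^n)$ and $\rho_\infty(X)\le n/\mathrm{d}(X,\ell_2^n)$ separately; the first will hold for every finite-dimensional $X$, and only the second will use the hypothesis on the ellipsoids. First I would apply a linear change of coordinates making the John ellipsoid of $X$ equal to the Euclidean unit ball $B_{\ell_2^n}$; proportionality then forces the Loewner ellipsoid to be $d\,B_{\ell_2^n}$ for some $d\ge 1$, so that $B_{\ell_2^n}\subseteq B_X\subseteq d\,B_{\ell_2^n}$. A volume comparison (the John ellipsoid has maximal volume among inscribed ellipsoids and the Loewner ellipsoid minimal volume among circumscribed ones) shows this $d$ equals $\mathrm{d}(X,\ell_2^n)$, so it suffices to prove $\rho_\infty(X)=n/d$.

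For the lower bound I would note that the inclusions above give $\tfrac1d\|\cdot\|_2\le\|\cdot\|_X\le\|\cdot\|_2$ on $\R^n$ and dually $B_{X^*}\subseteq B_{\ell_2^n}$; comparing the tensor norms factor by factor yields $\|z\|_{\pi_k(X)}\ge d^{-k}\|z\|_{\pi_k(\ell_2^n)}$ and $\|z\|_{\e_k(X)}\le\|z\|_{\e_k(\ell_2^n)}$ for all $z\in X^{\otimes k}$, hence $\rho_k(X)\ge d^{-1}\rho_k(\ell_2^n)$ for every $k$. Letting $k\to\infty$ (Lemma~\ref{lem:subadditivity}) and using $\rho_\infty(\ell_2^n)=n$ from Theorem~\ref{thm:Main1} gives $\rho_\infty(X)\ge n/d$.

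The upper bound is the heart of the argument. The natural approach — factoring $\mathrm{id}_X$ through a Hilbert space and bounding one factor by its operator norm and the other by its nuclear norm — is doomed, since trace duality forces the relevant product to be at least $|\Tr(\mathrm{id}_X)|=n$, and this route delivers only $\rho_\infty(X)\le nd$. Instead I would prove two Hilbertian estimates comparing the tensor norms to the Hilbert--Schmidt norm $\|\cdot\|_2$, one contributed by each ellipsoid. Applying the decomposition of identity associated with the Loewner ellipsoid $d\,B_{\ell_2^n}$ of $X$ (see Section~\ref{subsection:john}) produces contact points $v_i\in\partial B_X$ with $\|v_i\|_2=d$ and weights $c_i>0$ with $\sum_i c_i v_iv_i^{\mathsf T}=d^2\,\id$ and $\sum_i c_i=n$; then $\mu=\sum_i\tfrac{c_i}{n}\delta_{v_i}$ is a probability measure on $B_X$ with $\int\eta\,\eta^{\mathsf T}\,d\mu(\eta)=\tfrac{d^2}{n}\id$. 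Testing the injective norm of $w\in(X^*)^{\otimes k}$ against $\eta_1\otimes\cdots\otimes\eta_k$ with $\eta_j$ i.i.d.\ from $\mu$ — admissible since $X^{**}=X$ — and using $\sup\ge$ quadratic mean gives $\|w\|_{\e_k(X^*)}^2\ge\E\,|(\eta_1\otimes\cdots\otimes\eta_k)(w)|^2=\langle w,(\tfrac{d^2}{n}\id)^{\otimes k}w\rangle=(\tfrac{d^2}{n})^k\|w\|_2^2$, so that $\|w\|_{\e_k(X^*)}\le 1\Rightarrow\|w\|_2\le(\sqrt n/d)^k$; since $(\pi_k(X))^*=\e_k(X^*)$ this yields $\|z\|_{\pi_k(X)}=\sup\{|\langle z,w\rangle|:\|w\|_{\e_k(X^*)}\le 1\}\le(\sqrt n/d)^k\|z\|_2$. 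Next, the Loewner ellipsoid of $X^*$ is the polar of the John ellipsoid of $X$, hence equals $B_{\ell_2^n}$; the same construction for $X^*$ gives a probability measure on $B_{X^*}$ with second-moment matrix $\tfrac1n\id$, and the same averaging argument yields $\|z\|_{\e_k(X)}\ge n^{-k/2}\|z\|_2$ for all $z\in X^{\otimes k}$. Dividing, $\rho_k(X)^k=\sup_z\|z\|_{\pi_k(X)}/\|z\|_{\e_k(X)}\le(\sqrt n/d)^k/n^{-k/2}=(n/d)^k$ for every $k$, so $\rho_\infty(X)\le n/d$.

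The main obstacle I anticipate is conceptual rather than computational: recognising that the target $n/d$ lies strictly below what any Hilbert-space factorisation of $\mathrm{id}_X$ can achieve (trace duality confines such estimates to $[n,nd]$), so the proof must be split across the two ellipsoids, and — the point at which the hypothesis is genuinely used — that ``proportional John and Loewner ellipsoids'' is exactly the condition ensuring a single linear position in which the John ellipsoid is $B_{\ell_2^n}$ and the Loewner ellipsoid a concentric ball, so that the decomposition-of-identity inputs for $X$ and for $X^*$ are simultaneously available. The remaining ingredients (the quadratic-mean lower bound for injective norms, the duality $(\pi_k(X))^*=\e_k(X^*)$, and the volume comparison identifying $d$ with $\mathrm{d}(X,\ell_2^n)$) are routine.
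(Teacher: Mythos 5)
Your proof is correct and follows essentially the same route as the paper: the lower bound via $\rho_\infty(\ell_2^n)=n$ and the Banach--Mazur distance, and the upper bound by placing $X$ in John position, invoking the volume argument of Lemma~\ref{lem:john-loewner-distance}, and using John's decompositions for both ellipsoids to compare $\pi_k(X)$ and $\e_k(X)$ to the Hilbert--Schmidt norm. Your direct second-moment averaging over the contact-point measures is just an unpacked version of the paper's Lemma~\ref{lem:UpperBound} applied with $Q_1=Q_2=\ident_{\R^n}$ together with the nuclear-norm estimates $\|\ident\|_{N(X\to X^*)}=n$ and $\|\ident\|_{N(X^*\to X)}\le n/d^2$, which encode exactly the same quantitative content.
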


The proof of Theorem \ref{thm:enough-symmetries} is given in Section \ref{sec:proof-th2}. In particular, we see that $\rho_{\infty}(\ell_{\infty}^n)=\sqrt{n}$, showing that the lower bound in Theorem \ref{thm:Main1} is sharp.

\subsection*{Main results on \names of linear operators:} Let $T : X \to Y$ be a linear operator between finite-dimensional normed spaces and let $\|T\|_{N(X \to Y)}$ denote its nuclear norm. It is elementary to show (see Section \ref{sec:tau}) that
\begin{equation} \label{eq:tau-inequalities}\|T\|_{X \to Y} = \tau_1(T)  \leq \tau_{\infty}(T) \leq \|T\|_{N(X \to Y)}.\end{equation}
In order to understand when the upper bound in \eqref{eq:tau-inequalities} is sharp, we introduce the following property:
\begin{defn}\label{defn:NTP}
A pair of finite-dimensional normed spaces $(X,Y)$ is said to have the \emph{nuclear tensorization property} (NTP) if $\tau_\infty(T) = \|T\|_{N(X \to Y)}$ holds for every operator $T : X \to Y$.
\end{defn}
 
 It is an elementary fact (see \eqref{eq:nuclear-norm-id} below) that $\|\ident_X\|_{N(X\ra X)}=n$ for the identity operator $\ident_X:X\ra X$ on any $n$-dimensional normed space $X$. Since $\tau_\infty(\ident_X)=\rho_\infty(X)$, Theorem \ref{thm:Main1} implies that the pair $(X,X)$ fails to have the NTP whenever $X$ is not Euclidean. The example of $(\ell^2_\infty,\ell^2_\infty)$ is elementary and quite instructive. We will state it here, before developing the general theory later:
 
 \begin{example} \label{example:hadamard}
In the following example, all spaces are over the reals. Using the isometric isomorphism between $\ell_1^2$ and $\ell_\iy^2$, computing $\rho_\infty(\ell_\infty^2)$ is equivalent to computing $\tau_\infty(H)$ of the Hadamard matrix
\[
H = \frac 12 \begin{pmatrix} 1 & 1 \\ 1 & -1 \end{pmatrix},
\]
seen as an operator from $\ell_\infty^2$ to $\ell_1^2$. We have $\tau_1(H)=\|H\|_{\ell_\infty^2\ra \ell_1^2}=1$ and it is elementary to compute $\|H\|_{N(\ell_\infty^2 \to \ell_1^2)}=2$ (e.g., using \cite[Proposition 8.7]{tomczak1989banach}). Consider $k\in\N$ and identify $\e_k(\ell_\infty^2)$ with $\ell_\infty^{2^k}$, as well as their dual spaces. We find
\begin{equation} \label{eq:hadamard} \tau_k(H)^k = \sup_{\alpha,\beta \in \{-1,1\}^{2^k}} \langle \alpha , H^{\otimes k} \beta \rangle \leq \left(\sqrt{2}\right)^k, \end{equation}
by the Cauchy--Schwarz inequality and the fact that $2^{k/2}H^{\otimes k}$ is an orthogonal matrix. Consider the vector $x=(1,1,1,-1)$, which is an eigenvector for $H^{\otimes 2}$ with eigenvalue $1/2$. When $k=2p$ is even, the choice $\alpha=\beta=x^{\otimes p}$ shows that $\tau_k(H)=\sqrt{2}$. When $k$ is odd, the inequality in \eqref{eq:hadamard} is strict (the left-hand side is a rational number and the right-hand side is irrational) and therefore $\tau_k(H) < \sqrt{2}$. We have $\tau_\infty(H) = \sqrt{2}  < 2 = \|H\|_{N(\ell_\infty^2 \to \ell_1^2)}$ and $(\ell^2_\infty , \ell^2_1)$ does not have the NTP.
\end{example}
 
In Section \ref{sec:SpacesWithoutNTP} we find that many natural examples of pairs of normed spaces do not have the NPT. However, there are pairs of distinct normed spaces $(X,Y)$ which have the NTP. For example, this is the case when either $X$ or $Y$ is Euclidean:

\begin{thm} \label{thm:main-nuclear-norm}
If $X$, $Y$ are finite-dimensional normed spaces and one of them is Euclidean, then $(X,Y)$ has the NTP.
\end{thm}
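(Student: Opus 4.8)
The plan is to reduce the statement, by soft duality arguments, to the single inequality $\tau_\infty(D)\ge\operatorname{tr} D$ for a positive semidefinite diagonal operator $D$ on $\ell_2^n$, and then to prove that inequality with a random tensor.

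\emph{Reduction to a Euclidean domain, then to a diagonal operator.} Since $\|T\|_{N(X\to Y)}=\|T^*\|_{N(Y^*\to X^*)}$ and, using the duality between injective and projective tensor norms together with $(T^{\otimes k})^*=(T^*)^{\otimes k}$, also $\tau_k(T)=\tau_k(T^*)$ for all $k$, the pair $(X,Y)$ has the NTP if and only if $(Y^*,X^*)$ does. So we may assume $X=\ell_2^n$, and by \eqref{eq:tau-inequalities} only the bound $\tau_\infty(T)\ge\|T\|_{N(\ell_2^n\to Y)}$ remains. Choose, by trace duality, a contraction $R\colon Y\to\ell_2^n$ with $\operatorname{tr}(RT)=\|T\|_{N(\ell_2^n\to Y)}$. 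As $R^*\colon\ell_2^n\to Y^*$ is a contraction, $(R^*)^{\otimes k}$ sends the unit ball of $\e_k(\ell_2^n)$ into that of $\e_k(Y^*)=\pi_k(Y)^*$; feeding dual vectors of the form $(R^*)^{\otimes k}\phi$ into the dual description of $\tau_k(T)$ and using $\e_k(\ell_2^n)^*=\pi_k(\ell_2^n)$ yields
\[
\tau_k(T)^k\ \ge\ \sup\bigl\{\,\big|\langle (RT)^{\otimes k}z,\phi\rangle\big| : \|z\|_{\e_k(\ell_2^n)}\le 1,\ \|\phi\|_{\e_k(\ell_2^n)}\le 1\,\bigr\}\ =\ \tau_k(RT)^k ,
\]
so $\tau_\infty(T)\ge\tau_\infty(RT)$. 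If the pair $(\ell_2^n,\ell_2^n)$ has the NTP this finishes the proof, since then $\tau_\infty(RT)=\|RT\|_{N(\ell_2^n\to\ell_2^n)}=\|RT\|_{S_1}\ge\operatorname{tr}(RT)=\|T\|_N$. Finally, the singular value decomposition and the fact that unitaries on $\ell_2^n$ act isometrically on $\e_k(\ell_2^n)$ and on $\pi_k(\ell_2^n)$ reduce the NTP for $(\ell_2^n,\ell_2^n)$ to showing $\tau_\infty(D)\ge\sum_i d_i$ for every $D=\operatorname{diag}(d_1,\dots,d_n)$ with $d_i\ge 0$ (the reverse inequality being \eqref{eq:tau-inequalities} once more).

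\emph{The random tensor.} If $\operatorname{rank} D\le 1$ the operator is rank one and the NTP is immediate, so assume $\operatorname{rank} D\ge 2$ and set $\nu=\sum_i d_i$. Fix $k$ and let $z$ be a (real or complex) Gaussian vector in $(\ell_2^n)^{\otimes k}\cong\R^{n^k}$ with i.i.d.\ standard coordinates in the standard basis. Then $\E\langle D^{\otimes k}z,z\rangle=\operatorname{tr}(D^{\otimes k})=\nu^k$ and $\operatorname{Var}\langle D^{\otimes k}z,z\rangle=2\bigl(\sum_i d_i^2\bigr)^k$, while an $\varepsilon$-net argument with $\varepsilon\asymp 1/k$ (keeping $(1+\varepsilon)^k$ bounded) gives $\E\|z\|_{\e_k(\ell_2^n)}\le\mu_k$ with $\mu_k\le C\sqrt{nk\log k}$, so $\mu_k^{1/k}\to 1$. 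Because $\operatorname{rank} D\ge 2$ we have $\sum_i d_i^2<\nu^2$, hence $\operatorname{Var}\langle D^{\otimes k}z,z\rangle/\nu^{2k}\to 0$; by Chebyshev and Markov, for large $k$ the events $\{\langle D^{\otimes k}z,z\rangle\ge\tfrac12\nu^k\}$ and $\{\|z\|_{\e_k(\ell_2^n)}\le 2\mu_k\}$ each have probability $>\tfrac12$ and therefore intersect. Pairing such a $z$ against $z/\|z\|_{\e_k(\ell_2^n)}$ and using $\pi_k(\ell_2^n)=\e_k(\ell_2^n)^*$ and $D^{\otimes k}\ge 0$,
\[
\tau_k(D)^k\ \ge\ \frac{\|D^{\otimes k}z\|_{\pi_k(\ell_2^n)}}{\|z\|_{\e_k(\ell_2^n)}}\ \ge\ \frac{\langle D^{\otimes k}z,z\rangle}{\|z\|_{\e_k(\ell_2^n)}^2}\ \ge\ \frac{\nu^k}{8\mu_k^2},
\]
whence $\tau_\infty(D)\ge\tau_k(D)\ge\nu\,(8\mu_k^2)^{-1/k}\to\nu$ as $k\to\infty$. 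This gives $\tau_\infty(D)\ge\nu$ and completes the proof.

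\emph{The main obstacle.} The crux is the last step. The obvious explicit candidates fail: a ``GHZ-type'' tensor $\sum_i c_i e_i^{\otimes k}$ only yields $\tau_\infty(D)\ge\|D\|_{S_\infty}$, and a ``maximally entangled'' tensor $\bigl(\sum_i e_i\otimes e_i\bigr)^{\otimes k/2}$ only yields $\tau_\infty(D)\ge\|D\|_{S_2}$; for $D=\mathrm{id}$ these give $1$ and $\sqrt n$ rather than the required $n$. One is forced to use a generic tensor, and the technical heart is then the estimate that the injective norm of a Gaussian tensor over $\ell_2^n$ grows only subexponentially in the number $k$ of factors (precisely, $\mu_k^{1/k}\to 1$), which requires a chaining/net argument delicate enough to avoid the factor $2^k$ lost by the naive successive-approximation bound. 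A secondary, purely bookkeeping, difficulty is tracking the duality identifications $\e_k(\,\cdot\,)^*=\pi_k(\,\cdot^*\,)$ and the isometric invariances used in the reductions.
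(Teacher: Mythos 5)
Your proof is correct, and although it shares the paper's overall skeleton --- dualize so that the Euclidean space becomes the domain, use trace duality \eqref{eq:trace-duality} to reduce the lower bound $\tau_\infty(T)\geq\|T\|_N$ to lower-bounding $\tau_\infty$ of an operator $\ell_2^n\to\ell_2^n$ obtained by composing $T$ with a contraction, and finish with a Gaussian random tensor --- the key middle step is genuinely different. The paper averages $Q\circ T$ over the orthogonal group: the twirling identity \eqref{eq:twirling} turns it into the scalar operator $(\Tr[QT]/n)\,\ident_{\ell_2^n}$, the triangle inequality applied to the integral representation of $R^{\otimes k}$ gives $\tau_k(R)\leq\tau_k(Q\circ T)$, and the only probabilistic input needed is the single statement $\rho_\infty(\ell_2^n)=n$ (Theorem \ref{thm:euclidean}). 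You instead diagonalize by the singular value decomposition and prove $\tau_\infty(D)\geq\Tr D$ directly by testing $D^{\otimes k}$ against a Gaussian tensor, which requires a second-moment concentration computation for $\langle D^{\otimes k}z,z\rangle$ on top of the injective-norm estimate $\mu_k^{1/k}\to 1$; that estimate is precisely the content of Lemma \ref{lem:random-entanglement}, which the paper also quotes rather than proves, so you are not held to a higher standard there. The paper's route buys modularity ($\rho_\infty(\ell_2^n)=n$ is isolated as a result of independent interest, and the twirling argument transfers verbatim to the infinite-dimensional Theorem \ref{thm:infinite-dim}); your route eliminates the group-averaging step at the cost of a slightly more general, but still elementary, probabilistic estimate. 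Two harmless quibbles: Markov's inequality at threshold $2\mu_k$ gives probability at most $1/2$ rather than strictly less (take $3\mu_k$), and your hands-on derivation of $\tau_\infty(T)\geq\tau_\infty(RT)$ is just the ideal property of Lemma \ref{lem:ideal-property} applied with $B=R$.
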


We prove Theorem \ref{thm:main-nuclear-norm} in Section \ref{sec:tau=nuclear-for-euclidean} and in Section \ref{sec:infinite} we generalize it to infinite-dimensional Banach spaces. It is a natural question, whether there exist pairs of distinct non-Euclidean spaces with the NTP. We leave this question open.

\subsection*{Structure of the paper}

Section \ref{sec:notation} gathers background from Banach spaces theory. Section \ref{sec:tau} discusses elementary properties of the \names. A crucial ingredient to all our arguments is Lemma \ref{lem:UpperBound}, which gives an upper bound on the \name of an operator in terms of its factorization through Euclidean spaces. This lemma is especially powerful when combined with the John and/or Loewner ellipsoids of normed spaces. 

In Section \ref{sec:rho}, we focus on the \name of a space and prove Theorems \ref{thm:Main1} and \ref{thm:enough-symmetries}. The first step is Theorem \ref{thm:euclidean}, which states that the \name of the space $\ell_2^n$ equals $n$. We obtain this result by showing that a uniformly chosen random vector is typically ``very entangled". 

In Section \ref{sec:tau=nuclear-for-euclidean} we study the \names of linear operators and focus on when they coincide with the nuclear norm. In Section \ref{sec:ProofofMain-nuclear-norm} we show that the \name for a pair $(X,Y)$ of normed spaces coincides with the nuclear norm when either $X$ or $Y$ is Euclidean (thereby proving Theorem \ref{thm:main-nuclear-norm}). In Section \ref{sec:SpacesWithoutNTP} we we identify examples of spaces where the \name does not coincide with the nuclear norm. 

In Section \ref{sec:FurtherQuestions} we study some natural questions about the \name including whether it is a continuous or a norm (see Section \ref{sec:Norm}), whether it is multiplicative (see Section \ref{sec:submultiplicativity}), or for which spaces it attains its minimal possible value (see Section \ref{sec:Minimality}). While we answer some of these questions, we leave open many questions for future research. In Section \ref{sec:infinite} we discuss extensions of our work to infinite dimensional Banach spaces.

\section{Notation and preliminaries} \label{sec:notation}

In most of the paper, we restrict to finite-dimensional normed spaces. Extensions to infinite dimensions are briefly discussed in Section \ref{sec:infinite}. Let $X$ be a finite-dimensional real or complex normed space $X$. We denote its unit ball by $B_X$ and its dual space by $X^*$. If $Y$ is another finite-dimensional normed space, we denote by $L(X,Y)$ the space of linear operators from $X$ to $Y$. The operator norm of $T \in L(X,Y)$ is denoted $\|T\|_{X \to Y}$ or simply $\|T\|$.

\subsection{Tensor norms}

Let $X$, $Y$ be finite-dimensional normed spaces. A \emph{crossnorm} is a norm $\|\cdot\|$ on  $X \otimes Y$ satisfying the conditions
\[ \| x \otimes y \| = \|x\|_X \|y\|_Y  \textnormal{ and } \| x^* \otimes y^* \|_* = \|x^*\|_{X^*}  \|y\|_{Y^*}\]
for every $x \in X$, $y \in Y$, $x^* \in X^*$, $y^* \in Y^*$, where $\|\cdot\|_*$ is the dual norm to $\|\cdot\|$. Important examples of crossnorms are the \emph{injective norm} given by
\[
\|z\|_{\e} = \sup\Big\lset |(x^* \otimes y^*)(z)| ~:~x^* \in B_{X^*}, \ y^* \in B_{Y^*} \Big\rset,
\]
for $z\in X \otimes Y$, and the projective norm, defined as
\[
\|z\|_\pi = 
\inf\Big\lset \sum^n_{i=1} \|x_i\|_X \|y_i\|_Y ~:~n\in\N, z=\sum^n_{i=1} x_{i} \otimes y_{i}\Big\rset .
\]
We denote by $X \otimes_\e Y$ (resp., $X \otimes_\pi Y$) the space $X \otimes Y$ equipped with the injective (resp., projective). These norms are in duality: $(X \otimes_\pi Y)^*$ identifies with $X^* \otimes_\e Y^*$ and $(X \otimes_\e Y)^*$ identifies with $X^* \otimes_\pi Y^*$. All these definitions and properties have natural extensions to the tensor product of more than two spaces, leading to the definition of $\|\cdot\|_{\e_k(X)}$ and $\|\cdot\|_{\pi_k(X)}$ given in the introduction. We will denote by $\e_k(X)$ and $\pi_k(X)$ the space $X^{\otimes k}$ equipped with the injective and projective norm, respectively.

In the special case of Euclidean spaces, another crossnorm plays a special role: If $X, Y$ are Euclidean spaces, equipped with inner products $\langle \cdot , \cdot \rangle_X$ and $\langle \cdot , \cdot \rangle_Y$, we may define uniquely an inner product on $X \otimes Y$ by the formula
\[\langle x_1 \otimes y_1, x_2 \otimes y_2 \rangle = \langle x_1,x_2 \rangle_X \langle y_1,y_2 \rangle_Y\]
for $x_1$, $x_2$ in $X$ and $y_1$, $y_2$ in $Y$. The corresponding Euclidean norm, called the Hilbert--Schmidt norm and denoted $\|\cdot\|_{\mathrm{HS}}$, is a crossnorm. This definition extends immediately to $k \geq 2$ factors and we denote by $\|\cdot\|_{\mathrm{HS}_k(X)}$ the Hilbert--Schmidt norm on $X^{\otimes k}$.

A \emph{tensor norm} $\alpha$ is the data, for each pair $(X,Y)$ of finite-dimensional normed spaces, of a crossnorm $\|\cdot\|_{X \otimes_\alpha Y}$ on $X \otimes Y$, satisfying the following axiom called the \emph{metric mapping property}: 
if $X$, $X'$, $Y$, $Y'$ are finite-dimensional normed space and $S \in L(X,X')$, $T \in L(Y,Y')$, then 
\[ \|S \otimes T\|_{X \otimes_\alpha Y \to X' \otimes_\alpha Y' } =  \|S\| \cdot \|T\|.\]
Both the injective norm $\e$ and the projective norm $\pi$ are tensor norms.

\subsection{Operators, nuclear norm and trace duality}

Given finite-dimensional normed spaces $X, Y$, we denote by $L(X,Y)$ the space of linear operators from $X$ to $Y$. The \emph{operator norm} of an operator $T \in L(X,Y)$ is given by
\[ \|T\|_{X \to Y} = \sup_{x \in B_X} \|Tx\|_Y ,\]
and its \emph{nuclear norm} by
\[ \|T\|_{N(X \to Y)} = \inf\, \sum_{i} \|y_i\|_Y\|x^*_i\|_{X^*} ,
\]
where the infimum is over so-called nuclear decompositions $T(\cdot)=\sum_{i} x^*_i(\cdot)y_{i}$ with $y_1,\ldots ,y_n\in Y$ and $x^*_1,\ldots , x^*_n\in X^*$. We may write $\|T\|_N$ instead of $\|T\|_{N(X \to Y)}$ if there is no ambiguity. The operator and nuclear norms satisfy the \emph{ideal property}: given operators $T \in L(X_0,X)$, $S \in L(X,Y)$, $R \in L(Y,Y_0)$, we have
\[ \|RST\|_{X_0 \to Y_0} \leq \|R\|_{Y \to Y_0} \|S\|_{X \to Y} \|T\|_{X_0 \to X}, \]
\[ \|RST\|_{N(X_0 \to Y_0)} \leq \|R\|_{Y \to Y_0} \|S\|_{N(X \to Y)} \|T\|_{X_0 \to X}. \]
The space $L(X,Y)$ can be canonically identified with $X^* \otimes Y$; under this identification, the operator and nuclear norms on $L(X,Y)$ correspond respectively to the injective and projective norms on $X^* \otimes Y$. Then, the duality between injective and projective norms translates into the \emph{trace duality}: for any $T \in L(X,Y)$, we have
\begin{equation} 
\label{eq:trace-duality}
\|T\|_{N(X \to Y)} = \sup_{\|Q\|_{Y \to X} \leq 1} \Tr \lbr Q \circ T \rbr.
\end{equation}

\subsection{John and Loewner ellipsoids} \label{subsection:john}

Here, we review standard facts about John and Loewner ellipsoids and refer to \cite[\S 15]{tomczak1989banach} for details.
Let $X$ be an $n$-dimensional normed space. An ellipsoid in $X$ is a linear image of the Euclidean unit ball $B_2^n \subset \R^n$. Among all ellipsoids contained in $B_X$, there is a unique ellipsoid of maximal volume called the \emph{John ellipsoid} of $X$. Let $\|\cdot\|_J$ be the Euclidean norm induced by the John ellipsoid; it satisfies $\|\cdot\|_X \leq \|\cdot\|_J$. A \emph{John contact point} of $X$ is a vector $x \in X$ such that $\|x\|=\|x\|_J=1$.

Among all ellipsoids containing $B_X$, there is a unique ellipsoid of minimal volume called the \emph{Loewner ellipsoid} of $X$. Let $\|\cdot\|_L$ be the Euclidean norm induced by the John ellipsoid; it satisfies $\|\cdot\|_X \geq \|\cdot\|_L$. A \emph{Loewner contact point} of $X$ is a vector $x \in X$ such that $\|x\|=\|x\|_L=1$.

John's theorem asserts that any normed space contains sufficiently many John contact points to allow for a decomposition of the identity. It is most conveniently formulated for normed spaces $(\R^n,\|\cdot\|)$ whose John ellipsoid equals $B_2^n$. This is not a restriction since any $n$-dimensional normed space is isometric to such a space.

\begin{thm} \label{thm:john-lowner}
Let $\|\cdot\|$ be a norm on $\R^n$ such that $B_2^n$ is the John ellipsoid for $(\R^n,\|\cdot\|)$. Then, there are $c_i > 0$ and John contact points $v_i \in \R^n$ such that
\[ x = \sum_{i} c_i \scalar{v_i}{x} v_i, \]
for every $x \in \R^n$.
\end{thm}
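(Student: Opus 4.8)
The plan is to combine the volume--maximality of the John ellipsoid with a Hahn--Banach separation argument, which keeps the proof self-contained. Write $\|\cdot\|$ for $\|\cdot\|_X$ and $\langle\cdot,\cdot\rangle$ for the standard inner product on $\R^n$; by hypothesis $B_2^n\subseteq B_X$ is the maximal-volume ellipsoid inside $B_X$, equivalently $\|x\|_X\leq\|x\|_2$ for all $x$, equivalently $B_{X^*}\subseteq B_2^n$. First I would record that the two natural notions of contact point coincide: if $\|v\|_X=\|v\|_2=1$ and $y\in B_{X^*}$ satisfies $\langle y,v\rangle=\|v\|_X=1$, then $\|y\|_2\leq\|y\|_{X^*}\leq 1=\|v\|_2$ forces $y=v$ by the equality case of Cauchy--Schwarz, so $\|v\|_{X^*}=1$; the converse is identical using $X^{**}=X$. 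Hence the set $\mathcal{C}:=\{v\in S^{n-1}:\|v\|_X=1\}$ of John contact points also equals $\{v\in S^{n-1}:\|v\|_{X^*}=1\}$; it is compact, and it is nonempty, since otherwise a slightly dilated Euclidean ball would still sit inside $B_X$, contradicting maximality.

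The goal is the matrix identity $\tfrac1n\,\id\in\operatorname{conv}\{vv^{\top}:v\in\mathcal{C}\}$: by Carath\'eodory's theorem in the space of symmetric matrices this produces finitely many $v_i\in\mathcal{C}$ and weights $c_i>0$ with $\id=\sum_i c_i v_i v_i^{\top}$, and evaluating this at $x\in\R^n$ yields precisely $x=\sum_i c_i\langle v_i,x\rangle v_i$. Suppose the inclusion fails. The set $C:=\operatorname{conv}\{vv^{\top}:v\in\mathcal{C}\}$ is compact (continuous image of the compact $\mathcal{C}$, then convex hull), so the separation theorem provides a symmetric matrix $H$ with $\langle H,\tfrac1n\,\id\rangle<\langle H,M\rangle$ for every $M\in C$ in the Hilbert--Schmidt inner product, that is, $\tfrac1n\operatorname{tr}(H)<v^{\top}Hv$ for all $v\in\mathcal{C}$. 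Replacing $H$ by $H-\tfrac1n\operatorname{tr}(H)\,\id$ (this shifts both sides by the same constant, as $\|v\|_2=1$) we may assume $\operatorname{tr}(H)=0$ and $v^{\top}Hv>0$ for all $v\in\mathcal{C}$; in particular $H\neq 0$.

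From such an $H$ I would manufacture an ellipsoid of strictly larger volume than $B_2^n$ still lying in $B_X$, contradicting the John property. For small $t>0$ put $M_t:=\id+tH\succ 0$ and $\mathcal{E}_t:=\{x:x^{\top}M_t x\leq 1\}$, an ellipsoid of volume $(\det M_t)^{-1/2}\operatorname{vol}(B_2^n)$. Since $\log\det M_t=\operatorname{tr}\log(\id+tH)=-\tfrac{t^2}{2}\|H\|_{\mathrm{HS}}^2+O(t^3)$ by $\operatorname{tr}(H)=0$, one has $\det M_t<1$ and so $\operatorname{vol}(\mathcal{E}_t)>\operatorname{vol}(B_2^n)$ for all small $t>0$. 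On the other hand $\mathcal{E}_t\subseteq B_X$ is equivalent, via the support-function computation $\sup_{x\in\mathcal{E}_t}\langle y,x\rangle=(y^{\top}M_t^{-1}y)^{1/2}$ combined with $\|x\|_X=\sup_{y\in B_{X^*}}\langle y,x\rangle$, to $y^{\top}M_t^{-1}y\leq 1$ for all $y\in B_{X^*}$; expanding $M_t^{-1}=\id-tH+O(t^2)$ uniformly on the bounded set $B_{X^*}$, this reads $\|y\|_2^2-t\,y^{\top}Hy+O(t^2)\leq 1$.

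It remains to verify this inequality uniformly for small $t>0$, which is the heart of the argument. For $y\in B_{X^*}$ with $\|y\|_2^2\leq 1-\eta$ it is immediate once $t$ is small. For the rest I would use a compactness lemma: for each $\varepsilon>0$ there is $\eta>0$ such that $y\in B_{X^*}$ and $\|y\|_2\geq 1-\eta$ imply $\operatorname{dist}(y,\mathcal{C})<\varepsilon$ (otherwise a limit point of a bad sequence would be a contact point at distance $\geq\varepsilon$ from $\mathcal{C}$). Since $v\mapsto v^{\top}Hv$ is continuous and strictly positive on the compact set $\mathcal{C}$, for $\varepsilon$ small enough $y^{\top}Hy\geq c_0>0$ on the $\varepsilon$-neighbourhood of $\mathcal{C}$; hence for the remaining $y$ the left-hand side is $\leq 1-c_0 t+O(t^2)\leq 1$ once $t$ is small enough, independently of $y$. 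Thus $\mathcal{E}_t\subseteq B_X$ for small $t>0$, the desired contradiction. The main obstacle is exactly this last uniform containment estimate — it is where the precise structure of the John ellipsoid (the description of $\mathcal{C}$ and the inclusion $B_{X^*}\subseteq B_2^n$) enters; everything else is determinant bookkeeping and an application of Carath\'eodory's theorem.
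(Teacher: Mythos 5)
Your proof is correct: the reduction to the matrix identity $\tfrac{1}{n}\,\id\in\operatorname{conv}\{vv^{\top}:v\in\mathcal{C}\}$, the separation argument producing a traceless $H$ positive on the contact points, and the volume-increasing perturbation $\id+tH$ of the ellipsoid (with the uniform containment check split according to whether $\|y\|_2$ is close to $1$) together constitute the classical proof of John's theorem. The paper itself states this result as standard background and gives no proof, deferring to \cite[\S 15]{tomczak1989banach}; your argument is essentially the one found there, so there is nothing to reconcile. The only spot worth tightening is the ``converse is identical'' claim identifying John contact points of $X$ with Loewner contact points of $X^*$: the quick way is to note that $\|v\|_{X^*}=\|v\|_2=1$ gives $1=\langle v,v\rangle\leq\|v\|_{X^*}\|v\|_X=\|v\|_X\leq\|v\|_2=1$, which is what your compactness lemma actually uses.
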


Consider a normed space $X=(\R^n,\|\cdot\|)$. We identify the dual space $X^*$ with $(\R^n,\|\cdot\|_*)$, where
$\|x\|_* = \sup \{ |\scalar{x}{y}| \, ; \, \|y\|\leq 1 \}$. Assume that $B_2^n$ is the John ellipsoid of $X$. Then it is also the Loewner ellipsoid of $X^*$. Moreover, John contact points of $X$ coincide with Loewner contact points of $X^*$. If we decompose $\ident_X$ as $\sum_i c_i \scalar{v_i}{\cdot}v_i$ for John/Loewner contact points $v_i$, we have $\Tr\lbr \ident_X\rbr=\sum_{i}c_i = n$. Using this decomposition and trace-duality, together with the fact that $\|\ident_{\R^n}\|_{X^*\ra X}\leq 1$, shows that
\begin{equation} \label{eq:nuclear-norm-john} \|\ident_{\R^n}\|_{N(X \to X^*)} = n. \end{equation}
A related consequence is the fact that for every $n$-dimensional normed space $X$, 
\begin{equation} \label{eq:nuclear-norm-id} \|\ident_X\|_{N(X \to X)} = n .\end{equation}

\subsection{Banach--Mazur distance} \label{subsection:Bm-distance}

Let $X$, $Y$ be Banach spaces. The Banach--Mazur distance between $X$ and $Y$ is given by
\[
\text{d}(X,Y)=\inf\lset \|U\|_{X\ra Y}\|U^{-1}\|_{Y\ra X}~:~U:X\ra Y\text{ linear bijection}\rset .
\]

For $1 \leq p \leq \infty$ and an integer $n \geq 1$, denote by $\ell_p^n$ the space $(\R^n,\|\cdot\|_p)$ where $\|\cdot\|_p$ is the usual $p$-norm. For an $n$-dimensional normed space $X$, we set
\[
d_X := \mathrm{d}(X,\ell_2^n).
\]
A standard estimate, which can be deduced from Theorem \ref{thm:john-lowner}, is
that 
\[ d_X \leq \sqrt{n} \]
for every $n$-dimensional normed space $X$.

\subsection{Spaces with enough symmetries} \label{subsec:enough-symmetries}

Let $X$ be a finite-dimensional normed space. We say that an invertible linear map $U : X \to X$ is a \emph{symmetry} of $X$ if it satisfies $\|Ux\|_X = \|x\|_X$ for every $x \in X$. The symmetries of $X$ form a compact group which we denote by $G(X)$. 

Following \cite[\S 16]{tomczak1989banach}, we say that $X$ \emph{has enough symmetries} if every linear map $T:X\ra X$ satisfying $T\circ U= U\circ T$ for each symmetry $U\in G(X)$ is a multiple of $\ident_X$. The class of spaces with enough symmetries includes the spaces $\ell_p^n$, and normed spaces obtained by equipping the space of matrices with a unitarily invariant norm such as the Schatten $p$-norms.

In a space with enough symmetries, there is (up to a positive scalar multiple) a unique inner product $\langle \cdot , \cdot \rangle$ which is invariant, i.e., such that $\langle U(x), U(y) \rangle = \langle x,y \rangle$ for every $x, y \in X$ and $U \in G(X)$. Since the inner product associated to either the John or the Loewner ellipsoid are invariant, it follows that for a space with enough symmetries, the John and Loewner ellipsoids are proportional. 

Let $X$ be a space with enough symmetries, and $\mathrm{d}U$ the Haar measure on $G(X)$. For any linear operator $T: X \to X$, we have
\begin{equation} \label{eq:twirling}
\int_{G(X)} U^{-1}\circ T\circ U \text{d}U = \Tr\lbr T\rbr \frac{\ident_X}{\text{dim}(X)},
\end{equation}
where $\Tr$ denotes the trace on $L(X,X)$.

\section{Basic properties of \texorpdfstring{$\tau_k$}{tau k} and \texorpdfstring{$\tau_\infty$}{tau infinity}} \label{sec:tau}

\subsection{Existence of \texorpdfstring{$\tau_\infty$}{tau infinity} and behaviour under transformations}

We now investigate more systematically the properties of the quantities $\tau_k$ and $\tau_\infty$ (see \eqref{eq:def-tau} for the definition). We first show that
\begin{equation} \label{eq:tau-basic-bound} \|T\|_{X \to Y} \leq \tau_k(T) \leq \|T\|_{N ( X \to Y)}. \end{equation}
The left inequality follows by restricting the supremum in \eqref{eq:def-tau} to tensors of the form $z=x^{\otimes k}$ for $x \in X$. To prove the right-hand side, consider a nuclear decomposition of the form $T(\cdot) = \sum_i x_i^*(\cdot) y_i$ with $x^*_1,\dots,x^*_n\in X^*$ and $y_1,\dots,y_n\in Y$. We have
\begin{eqnarray*} \left\| T^{\otimes k}(z) \right\|_{\pi_k(Y)} & \leq & \sum_{i_1,\dots,i_k}
\left| (x^*_{i_1} \otimes \dots \otimes x^*_{i_k})(z) \right| \cdot \|y_{i_1} \otimes \cdots \otimes y_{i_k}\|_{\pi_k(Y)} \\
& \leq & \sum_{i_1,\dots,i_k} \|x_{i_1}^*\| \dots \|x_{i_k}^*\| \cdot \|z\|_{\e_k(X)} \|y_{i_1} \| \dots \|y_{i_k}\| \\
& = & \left( \sum_i \|x_i^*\| \cdot \|y_i\| \right)^k \|z\|_{\e_k(X)},
\end{eqnarray*}
for every $z \in X^{\otimes k}$, and the result follows by optimizing over decompositions of $T$.

\begin{lem} \label{lem:subadditivity}
Let $X, Y$ be finite-dimensional normed spaces and $T:X \to Y$ a linear map. The limit of the sequence $(\tau_k(T))_{k \in \N}$ exists, and
\[ \tau_\infty(T):=\lim_{k \to \infty} \tau_k(T) = \sup_{k \geq 1} \tau_k(T). \]
\end{lem}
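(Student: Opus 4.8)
The plan is to prove that the sequence $a_k := \tau_k(T)^k = \|T^{\otimes k}\|_{\e_k(X) \to \pi_k(Y)}$ is \emph{supermultiplicative}, i.e. $a_{k+l} \geq a_k a_l$ for all $k,l \geq 1$, and then to invoke the superadditive version of Fekete's lemma for the sequence $(\log a_k)_k$. If $T = 0$ everything vanishes and there is nothing to prove, so assume $T \neq 0$; then $a_1 = \|T\|_{X\to Y} > 0$ and supermultiplicativity forces $a_k \geq a_1^k > 0$, so that $\log a_k \in \R$.

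The key ingredient is the behaviour of the injective and projective norms under a regrouping of tensor factors. First I would record that for $z_1 \in X^{\otimes k}$ and $z_2 \in X^{\otimes l}$ one has
\[
\|z_1 \otimes z_2\|_{\e_{k+l}(X)} \leq \|z_1\|_{\e_k(X)}\,\|z_2\|_{\e_l(X)},
\]
which is immediate from the definition of the injective norm: for $\lambda_1,\dots,\lambda_{k+l} \in B_{X^*}$ we have $(\lambda_1 \otimes \cdots \otimes \lambda_{k+l})(z_1 \otimes z_2) = (\lambda_1 \otimes \cdots \otimes \lambda_k)(z_1)\cdot(\lambda_{k+1}\otimes\cdots\otimes\lambda_{k+l})(z_2)$, and the supremum over the two blocks of functionals can be taken independently. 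Applying the same inequality to the dual space $Y^*$ and using the duality $(\pi_m(Y))^* = \e_m(Y^*)$, I would then deduce the reverse inequality for the projective norm: for $w_1 \in Y^{\otimes k}$ and $w_2 \in Y^{\otimes l}$,
\[
\|w_1 \otimes w_2\|_{\pi_{k+l}(Y)} \geq \|w_1\|_{\pi_k(Y)}\,\|w_2\|_{\pi_l(Y)};
\]
indeed, choosing functionals $\mu_i$ on $Y^{\otimes \bullet}$ of $\e$-norm $\leq 1$ that nearly attain $\|w_i\|_{\pi}$ via trace duality, the tensor $\mu_1 \otimes \mu_2$ still has $\e$-norm $\leq 1$ by the previous display (applied to $Y^*$) and pairs with $w_1 \otimes w_2$ to give $\langle \mu_1, w_1\rangle \langle \mu_2, w_2 \rangle$. (Alternatively, both facts are instances of the associativity of the tensor norms $\e$ and $\pi$, but I prefer the self-contained verification.)

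Granting these two facts, supermultiplicativity is straightforward: given $\eps > 0$, pick nonzero $z_1 \in X^{\otimes k}$ and $z_2 \in X^{\otimes l}$, normalized so that $\|z_1\|_{\e_k(X)} = \|z_2\|_{\e_l(X)} = 1$, with $\|T^{\otimes k} z_1\|_{\pi_k(Y)} \geq (1-\eps) a_k$ and $\|T^{\otimes l} z_2\|_{\pi_l(Y)} \geq (1-\eps) a_l$. Since $T^{\otimes(k+l)}(z_1 \otimes z_2) = (T^{\otimes k} z_1) \otimes (T^{\otimes l} z_2)$, testing the definition of $a_{k+l}$ on $z_1 \otimes z_2$ and using the two displays above gives $a_{k+l} \geq (1-\eps)^2 a_k a_l$; letting $\eps \to 0$ yields $a_{k+l} \geq a_k a_l$. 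Consequently $(\log a_k)_k$ is superadditive, and it is bounded above by $k \log \|T\|_{N(X \to Y)}$ thanks to \eqref{eq:tau-basic-bound}; Fekete's lemma then gives $\lim_{k} \tfrac1k \log a_k = \sup_k \tfrac1k \log a_k \in \R$, and exponentiating (the exponential being continuous and increasing) yields $\tau_\infty(T) = \lim_k a_k^{1/k} = \sup_k a_k^{1/k} = \sup_{k \geq 1} \tau_k(T)$. I do not expect any serious obstacle: the only point requiring a little care is the multiplicativity of $\e$ and $\pi$ under regrouping of factors, which I verify directly from the definitions as above.
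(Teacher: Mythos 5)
Your proof is correct and follows essentially the same route as the paper: supermultiplicativity of $\tau_k(T)^k$ obtained by restricting the supremum to product tensors $z_1\otimes z_2$, followed by Fekete's lemma applied to $k\log\tau_k(T)$. The only difference is that you spell out the two regrouping inequalities for $\e$ and $\pi$ (the latter via duality) that the paper leaves implicit, and you handle the degenerate case $T=0$ explicitly; both verifications are accurate.
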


\begin{proof} For any integers $k_1, k_2 \geq 1$, we have
\begin{equation} \label{eq:tau-superadditive} (\tau_{k_1+k_2}(T))^{k_1+k_2} \geq (\tau_{k_1}(T))^{k_1}(\tau_{k_2}(T))^{k_2}, \end{equation}
as can be seen by restricting the supremum defining $\tau_{k_1+k_2}(T)$ to elements of the form $z_1 \otimes z_2$, for $z_1 \in X^{\otimes k_1}$, $z_2 \in X^{\otimes k_2}$. The conclusion follows by applying Fekete's lemma \cite{fekete1923verteilung} to the sequence $\lb t_k\rb_k$ given by $t_k = k\log (\tau_k(T))$. \end{proof}

By \eqref{eq:tau-superadditive} the inequality $\tau_k(T) \leq \tau_l(T)$ holds whenever $k$ divides $l$, but we have seen in Example \ref{example:hadamard} that the sequence $(\tau_k(T))_k$ is in general not monotonically increasing. An immediate consequence of \eqref{eq:tau-basic-bound} is the inequality
\begin{equation} \tau_{\iy}(T) \leq \|T\|_{N(X \to Y)}
\label{equ:tauInfUpperLower} .\end{equation}

The next lemma shows that the \name behaves nicely under adjoints.

\begin{lem} \label{lem:tau-duality}
Let $X$, $Y$ be finite-dimensional normed spaces, and $T \in L(X,Y)$. Then 
$\tau_k(T) = \tau_k(T^*)$ for any $k \in \N \cup \{\iy\}$.
\end{lem}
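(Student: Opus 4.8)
The plan is to use the duality between the injective and projective tensor norms together with trace duality for the operator and nuclear norms. Recall that $(\e_k(X))^* = \pi_k(X^*)$ and $(\pi_k(Y))^* = \e_k(Y^*)$ isometrically, and that $(T^*)^{\otimes k} = (T^{\otimes k})^*$ under the canonical identifications. The quantity $\tau_k(T)^k = \|T^{\otimes k}\|_{\e_k(X) \to \pi_k(Y)}$ is the norm of an operator between the normed spaces $\e_k(X)$ and $\pi_k(Y)$; the general fact that $\|S\|_{V \to W} = \|S^*\|_{W^* \to V^*}$ for operators between finite-dimensional normed spaces then gives
\[ \|T^{\otimes k}\|_{\e_k(X) \to \pi_k(Y)} = \|(T^{\otimes k})^*\|_{(\pi_k(Y))^* \to (\e_k(X))^*} = \|(T^*)^{\otimes k}\|_{\e_k(Y^*) \to \pi_k(X^*)} = \tau_k(T^*)^k. \]
Taking $k$-th roots yields $\tau_k(T) = \tau_k(T^*)$ for finite $k$, and the case $k = \infty$ follows by taking the limit $k \to \infty$ (using Lemma \ref{lem:subadditivity}).

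First I would recall the identification $L(X^{\otimes k}, Y^{\otimes k}) \cong (X^{\otimes k})^* \otimes Y^{\otimes k}$, and verify the identity $(T^{\otimes k})^* = (T^*)^{\otimes k}$ as an operator from $(Y^{\otimes k})^*$ to $(X^{\otimes k})^*$; this is a routine check on elementary tensors $\lambda_1 \otimes \cdots \otimes \lambda_k$. Next I would invoke the isometric identifications $(X^{\otimes k})^* \otimes_{\e_k} = \e_k(X^*)$ on the dual of $\pi_k$ and $(\cdot)^* \otimes_{\pi_k} = \pi_k(X^*)$ on the dual of $\e_k$, which were stated in Section \ref{sec:notation}. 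The last ingredient is the elementary fact that for any linear operator $S$ between finite-dimensional normed spaces $V, W$ one has $\|S\|_{V \to W} = \|S^*\|_{W^* \to V^*}$; this is immediate from the definition of the dual norm and the Hahn--Banach theorem (in finite dimensions it is just bidualjity, $V^{**} = V$). Assembling these gives the displayed chain of equalities.

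I do not expect any real obstacle here: the statement is a formal consequence of the duality relations already established in Section \ref{sec:notation}. The only point requiring a modicum of care is bookkeeping with the canonical identifications — making sure that the isometries $(\e_k(X))^* \cong \pi_k(X^*)$ and $(\pi_k(Y))^* \cong \e_k(Y^*)$ are applied to the correct side of $(T^{\otimes k})^*$, and that no spurious transpose or conjugation is introduced. For the case $k = \infty$, one simply notes that $\tau_\infty(T) = \lim_k \tau_k(T) = \lim_k \tau_k(T^*) = \tau_\infty(T^*)$.
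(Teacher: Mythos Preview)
Your proposal is correct and follows essentially the same approach as the paper: both arguments invoke the duality $(\e_k(X))^*=\pi_k(X^*)$, $(\pi_k(Y))^*=\e_k(Y^*)$ together with $(T^{\otimes k})^*=(T^*)^{\otimes k}$ and the equality $\|S\|=\|S^*\|$ for operators between finite-dimensional spaces, then pass to the limit for $k=\infty$. The paper simply compresses this into a single sentence.
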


\begin{proof}
For any $k \geq 1$, we have $\|T^{\otimes k}\|_{\e_k(X) \to \pi_k(Y)} = 
\|(T^*)^{\otimes k}\|_{\e_k(Y^*) \to \pi_k(X^*)}$ by the duality between the injective and projective norms. The result follows.
\end{proof}

By Lemma \ref{lem:tau-duality}, a pair $(X,Y)$ of finite-dimensional normed spaces has the NTP (see Definition \ref{defn:NTP}) if and only if $(Y^*,X^*)$ has the NTP. \Names also satisfy the ideal property, in the following sense.

\begin{lem} \label{lem:ideal-property}
Let $X$, $X'$, $Y$, $Y'$ be finite-dimensional normed spaces and $T \in L(X,Y)$. For $A \in L(X',X)$, $B \in L(Y,Y')$ and every $k \in \N \cup \{\iy\}$ we have
\[ \tau_k(BTA) \leq \|B\| \tau_k(T) \|A\| .\]
\end{lem}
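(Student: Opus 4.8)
The plan is to reduce everything to the finite-level statement $\tau_k(BTA) \leq \|B\|\,\tau_k(T)\,\|A\|$ for each fixed $k \in \N$, since the case $k = \iy$ follows immediately by taking $k \to \infty$ on both sides (the right-hand side converges to $\|B\|\,\tau_\infty(T)\,\|A\|$ by Lemma \ref{lem:subadditivity}, and $\tau_k(BTA) \to \tau_\infty(BTA)$). So from now on fix $k \in \N$.

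First I would observe that $(BTA)^{\otimes k} = B^{\otimes k} \circ T^{\otimes k} \circ A^{\otimes k}$, so that as an operator we have
\[ (BTA)^{\otimes k} : \e_k(X') \xrightarrow{A^{\otimes k}} \e_k(X) \xrightarrow{T^{\otimes k}} \pi_k(Y) \xrightarrow{B^{\otimes k}} \pi_k(Y'). \]
The key inputs are the metric mapping property of the injective and projective tensor norms (both $\e$ and $\pi$ are tensor norms, as recalled in the excerpt): applied to $A$ seen as a map $X' \to X$, the metric mapping property of $\e$ gives $\|A^{\otimes k}\|_{\e_k(X') \to \e_k(X)} = \|A\|^k$, and similarly the metric mapping property of $\pi$ gives $\|B^{\otimes k}\|_{\pi_k(Y) \to \pi_k(Y')} = \|B\|^k$. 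Then I would chain these three operators together, using submultiplicativity of operator norms:
\[ \|(BTA)^{\otimes k}\|_{\e_k(X') \to \pi_k(Y')} \leq \|B^{\otimes k}\|_{\pi_k(Y) \to \pi_k(Y')} \cdot \|T^{\otimes k}\|_{\e_k(X) \to \pi_k(Y)} \cdot \|A^{\otimes k}\|_{\e_k(X') \to \e_k(X)}. \]
Substituting the three evaluations gives $\|(BTA)^{\otimes k}\|_{\e_k(X') \to \pi_k(Y')} \leq \|B\|^k \cdot \tau_k(T)^k \cdot \|A\|^k$, and taking $k$-th roots yields $\tau_k(BTA) \leq \|B\|\,\tau_k(T)\,\|A\|$.

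There is really no serious obstacle here; this is an immediate consequence of the metric mapping property combined with submultiplicativity of the operator norm under composition. The only points worth being slightly careful about are: (i) $A^{\otimes k}$ is used as a map between the \emph{injective} tensor powers (not projective), so one must invoke the metric mapping property for $\e$ rather than $\pi$, and likewise $B^{\otimes k}$ between \emph{projective} tensor powers; and (ii) for the $k = \iy$ case one must justify passing to the limit, which is legitimate because all three sequences $\tau_k(BTA)$, $\tau_k(T)$ converge (the constants $\|A\|$, $\|B\|$ being $k$-independent). Since $\|BTA\|_{X' \to Y'} \leq \|B\|\,\|T\|\,\|A\|$ also holds (the ideal property for the operator norm), the inequality at $k = 1$ is consistent and one could alternatively note that $\tau_k(BTA)^k$ is the supremum of $\|(BTA)^{\otimes k} z\|_{\pi_k(Y')} / \|z\|_{\e_k(X')}$ and bound the numerator directly; but the operator-norm composition phrasing above is cleanest.
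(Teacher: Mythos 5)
Your proof is correct and follows essentially the same route as the paper: factor $(BTA)^{\otimes k}=B^{\otimes k}\circ T^{\otimes k}\circ A^{\otimes k}$, apply the metric mapping property of $\e$ to $A^{\otimes k}$ and of $\pi$ to $B^{\otimes k}$, and combine via submultiplicativity of the operator norm, with the $k=\iy$ case obtained by passing to the limit. No issues.
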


\begin{proof}
It suffices to prove the result for finite $k$. We combine the ideal property of the operator norm 
and the metric mapping property of the injective and projective norms to obtain
\[ \tau_k(BTA) \leq \|A^{\otimes k}\|^{1/k}_{\e_k(X') \to \e_k(X)} \tau_k(T) 
\|B^{\otimes k}\|^{1/k}_{\pi_k(Y) \to \pi_k(Y')} = \|A\| \tau_k(T) \|B\|. \qedhere \]
\end{proof}

\subsection{Upper bound by factorization through \texorpdfstring{$\ell_2$}{ell 2}}

The following lemma is a useful tool to find upper bounds on $\tau_\infty(T)$ by considering factorizations of the operator $T$ through Euclidean spaces. 

\begin{lem} \label{lem:UpperBound}
Let $X$ and $Y$ denote finite-dimensional normed spaces. For any $d\in\N$ and any pair of linear operators 
\[
Q_1:\ell^d_2\ra Y \quad\text{ and }\quad Q_2:X\ra \ell^d_2 ,
\]
we have
\[
\tau_\infty\lb Q_1\circ Q_2\rb \leq \|Q_1\circ Q^*_1\|^{\frac{1}{2}}_{N(Y^*\ra Y)}\|Q^*_2\circ Q_2\|^{\frac{1}{2}}_{N(X\ra X^*)} .
\]
\end{lem}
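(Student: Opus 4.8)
The plan is to reduce everything to the Euclidean case via the ideal property (Lemma~\ref{lem:ideal-property}) and the known value $\tau_\infty(\ident_{\ell_2^d})=\rho_\infty(\ell_2^d)=d$ from Theorem~\ref{thm:euclidean}. Write $T=Q_1\circ Q_2:X\to Y$. The key observation is that we should factor $T$ \emph{symmetrically} through $\ell_2^d$ by absorbing the ``distortion'' of $Q_1$ and $Q_2$ into self-adjoint positive operators. Concretely, on $\ell_2^d$ consider the positive semidefinite operators $P_1:=Q_1^*\circ Q_1$ (acting on $\ell_2^d$, using the identification $Y^*\cong Y$ only through the inner product on $\ell_2^d$; more precisely $Q_1^*:Y^*\to\ell_2^d$ and we regard $Q_1^*Q_1$ after identifying $\ell_2^d$ with its dual) and $P_2:=Q_2\circ Q_2^*$. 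Actually the cleanest route: use polar-type decompositions to write $Q_1 = U_1 |Q_1|$ and $Q_2 = |Q_2^*|\, U_2$ where $U_1,U_2$ are co-isometries/isometries between Euclidean spaces (hence $\tau_\infty(U_i)\le 1$ by Lemma~\ref{lem:ideal-property} since $\|U_i\|\le1$, or even $=1$), and $|Q_1|,|Q_2^*|$ are positive operators on $\ell_2^d$. Then $T = U_1\,|Q_1|\,|Q_2^*|\,U_2$ up to the bookkeeping, and by the ideal property $\tau_\infty(T)\le \tau_\infty\!\big(|Q_1|\,|Q_2^*|\big)$.

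Next I would handle the middle operator, which is now a product of two positive operators on the \emph{same} Euclidean space $\ell_2^d$. For positive operators $R,S$ on $\ell_2^d$ one has the multiplicativity of $\tau_\infty$ on commuting-enough pieces — but in general we cannot split a product. Instead the trick is: $\tau_\infty(RS)=\tau_\infty\big((RS)^*\big)=\tau_\infty(SR)$ by Lemma~\ref{lem:tau-duality}, and more usefully $\tau_\infty(RS)^2 = \tau_\infty(RS)\,\tau_\infty(SR)$; combined with sub/supermultiplicativity this should pin it to $\tau_\infty(R^{1/2}S R^{1/2})$ or similar. The slick version: write $|Q_1|\,|Q_2^*| = A B$ with $A=|Q_1|$, $B=|Q_2^*|$, and note $AB$ is similar to $A^{1/2}BA^{1/2}$, a positive operator, and for a positive operator $C$ on a Euclidean space, $\tau_\infty(C) = \mathrm{Tr}(C)$ — this is exactly the statement that $(\ell_2,\ell_2)$ has the NTP (Theorem~\ref{thm:main-nuclear-norm}), whose proof one should not circularly invoke; but here the NTP for the Euclidean--Euclidean pair itself follows from Theorem~\ref{thm:euclidean} applied to $\ident$ plus the ideal property sandwiching $C$ between $\|C\|^{1/2}\ident$ scalings after diagonalizing. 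In any case, $\tau_\infty(A^{1/2}BA^{1/2}) \le \|A^{1/2}BA^{1/2}\|_{N(\ell_2^d\to\ell_2^d)} = \mathrm{Tr}(A^{1/2}BA^{1/2}) = \mathrm{Tr}(AB)$.

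Finally I would translate $\mathrm{Tr}(AB) = \mathrm{Tr}(|Q_1||Q_2^*|)$ back into the claimed nuclear norms. We have $\mathrm{Tr}(|Q_1||Q_2^*|) \le \|\,|Q_1|^2\,\|_N^{1/2}\,\|\,|Q_2^*|^2\,\|_N^{1/2}$ by Cauchy--Schwarz for the Hilbert--Schmidt inner product together with $\|M\|_N = \mathrm{Tr}|M|$ and $\|M\|_{\mathrm{HS}}^2 = \mathrm{Tr}(M^2) \le \|M\|_N^2$ for positive $M$ — wait, more directly: $\mathrm{Tr}(|Q_1||Q_2^*|)\le \||Q_1|\|_{\mathrm{op}}... $ no; the right inequality is $\mathrm{Tr}(RS)\le \mathrm{Tr}(R)^{1/2}... $ is false in general, so instead use $\mathrm{Tr}(RS) = \langle R, S\rangle_{\mathrm{HS}} \le \|R\|_{\mathrm{HS}}\|S\|_{\mathrm{HS}} = \mathrm{Tr}(R^2)^{1/2}\mathrm{Tr}(S^2)^{1/2} = \mathrm{Tr}(Q_1^*Q_1 Q_1^*Q_1)^{1/2}\cdots$. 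Now $\mathrm{Tr}(|Q_1|^2) = \mathrm{Tr}(Q_1^*Q_1) = \mathrm{Tr}(Q_1 Q_1^*) = \|Q_1\circ Q_1^*\|_{N(Y^*\to Y)}$ since $Q_1Q_1^*$ is a positive operator on the Euclidean... no, $Q_1 Q_1^*: Y^*\to Y$ need not be ``positive'' in a way making nuclear norm $=$ trace. Here is the actual point, and the \textbf{main obstacle}: one must verify $\|Q_1 Q_1^*\|_{N(Y^*\to Y)} = \mathrm{Tr}(Q_1 Q_1^*)$ where the trace is computed via any identification — this holds because $Q_1 Q_1^*$ factors as a positive operator $Q_1^*: Y^*\to \ell_2^d$ followed by its adjoint, so trace duality (\ref{eq:trace-duality}) gives $\|Q_1Q_1^*\|_N \ge \mathrm{Tr}(\ident\circ Q_1 Q_1^*)$ with a contraction $\ident$, and the decomposition $Q_1Q_1^* = \sum_j (Q_1 e_j)\langle Q_1 e_j,\cdot\rangle$ over an orthonormal basis shows $\le$. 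So the calculation closes, but the delicate bookkeeping of which space carries which inner product, and ensuring the intermediate operator is genuinely positive so that ``nuclear norm $=$ trace'' applies, is where care is needed; the substance is all in Theorem~\ref{thm:euclidean} and the ideal property, and the rest is linear-algebraic massaging of factorizations.
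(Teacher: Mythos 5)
Your approach has genuine gaps and, as written, cannot be repaired to give the stated bound. (1) The polar decomposition $Q_1=U_1|Q_1|$ is not available: since $Y$ is a general normed space, $Q_1^*\circ Q_1$ is not an operator on $\ell_2^d$ (the composition is ill-typed, as $Q_1^*$ maps $Y^*$ to $\ell_2^d$), so defining $|Q_1|$ forces you to choose an auxiliary Euclidean structure on $Y$; the resulting $U_1$ is then an isometry only for that auxiliary norm, and its operator norm into $(Y,\|\cdot\|_Y)$ can be arbitrarily large, so Lemma~\ref{lem:ideal-property} does not let you discard it. (2) Replacing $AB$ by the similar positive operator $A^{1/2}BA^{1/2}$ costs a factor $\|A^{1/2}\|\cdot\|A^{-1/2}\|$ under the ideal property; similarity does not preserve $\tau_\infty$. (3) Most importantly, the quantity your computation produces is a trace over $\ell_2^d$ in the auxiliary Euclidean structure, whereas the right-hand side of the lemma is $\|Q_1\circ Q_1^*\|_{N(Y^*\to Y)}$, which depends on $\|\cdot\|_Y$. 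For the rank-one map $Q_1=\langle e,\cdot\rangle\, y_0$ one has $\|Q_1\circ Q_1^*\|_{N(Y^*\to Y)}=\|y_0\|_Y^2$, which no Hilbert--Schmidt computation on $\ell_2^d$ can recover; with the John ellipsoid of $Y$ as auxiliary structure your intermediate bound is in general strictly larger than the lemma's right-hand side, so the final conversion cannot close. Your own attempt to prove $\|Q_1Q_1^*\|_{N(Y^*\to Y)}=\Tr(Q_1Q_1^*)$ founders on exactly this point: the map $\ident:Y\to Y^*$ you would test against in trace duality is not a contraction.

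The paper's proof is elementary and entirely different. It factors $(Q_1\circ Q_2)^{\otimes k}$ through $\mathrm{HS}_k(\ell_2^d)$ and bounds the two factors separately: for $\|z\|_{\e_k(X)}\leq 1$ one writes $\|Q_2^{\otimes k}(z)\|_{\mathrm{HS}}^2=[(Q_2^*\circ Q_2)^{\otimes k}(z)](z)$ and estimates this by $\|Q_2^*\circ Q_2\|_{N(X\to X^*)}^{k}$ using an arbitrary nuclear decomposition of $Q_2^*\circ Q_2$ together with the definition of the injective norm of $z$; the companion bound $\|Q_1^{\otimes k}\|_{\mathrm{HS}_k(\ell_2^d)\to\pi_k(Y)}\leq\|Q_1\circ Q_1^*\|_{N(Y^*\to Y)}^{k/2}$ then follows by duality. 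This keeps the norms of $X$ and $Y$ in play throughout, which is precisely what your reduction to positive operators on $\ell_2^d$ loses. It also needs neither Theorem~\ref{thm:euclidean} nor the NTP for Euclidean pairs (both are proved independently of Lemma~\ref{lem:UpperBound}, so invoking them would not be circular, merely unnecessary and, per the above, insufficient).
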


In Lemma \ref{lem:UpperBound}, the adjoint of $Q_1 = \sum \langle e_i,\cdot \rangle y_i$, where $(e_i)$ is the canonical basis of $\ell_2^d$ and $(y_i) \subset Y$, should be understood as $Q_1^* =
\sum y_i(\cdot) e_i : Y^* \to \ell_2^d$.

\begin{proof}
For any $k\in \N$, we have
\[ \|(Q_1 \circ Q_2)^{\otimes k}\|_{\e_k(X)\ra \pi_k(Y)} \leq \|Q_2^{\otimes k}\|_{\e_k(X) \to \mathrm{HS}_k(\ell_2^d)} \|Q_1^{\otimes k}\|_{\mathrm{HS}_k(\ell_2^d) \to \pi_k(Y)}
.\]
Fix an element $z \in X^{\otimes k}$ with $\|z\|_{\e_k(X)} \leq 1$, together with a nuclear decomposition
$Q^*_2\circ Q_2( \cdot) = \sum_{i} \tilde{x}_i(\cdot) x_i$ with $x_i,\tilde{x}_i\in X^*$. We find that
\begin{align*}
\|Q^{\otimes k}_2(z)\|^2_{\mathrm{HS}_k(\ell_2^d)} &= [(Q^*_1 \circ Q_1)^{\otimes k}(z)](z) \\
&\leq \sum_{i_1,\ldots ,i_k} |(x_{i_1}\otimes\cdots \otimes x_{i_k})(z)|\cdot |(\tilde{x}_{i_1}\otimes\cdots \otimes \tilde{x}_{i_k})(z)|\\
&\leq \lb\sum_i \|\tilde{x}_i\|_{X^*}\|x_i\|_{X^*}\rb^k.
\end{align*}

Optimizing over $z$ and over nuclear decompositions of $Q^*_2\circ Q_2$, we conclude  that
\[
\|Q^{\otimes k}_2\|_{\e_k(X) \to \mathrm{HS}_k(\ell_2^d)} \leq \|Q^*_2\circ Q_2\|^{k/2}_{N(X\ra X^*)}.
\]
By taking adjoints and using the previous inequality, we obtain 
\[ \|Q_1^{\otimes k}\|_{\mathrm{HS}_k(\ell_2^d) \to \pi_k(Y)} = 
\|(Q_1^*)^{\otimes k}\|_{\e_k(Y^*) \to \mathrm{HS}_k(\ell_2^d)}
\leq \|Q_1\circ Q_1^*\|^{k/2}_{N(Y^*\ra Y)}.
\]
Combining the previous bounds and taking the limit $k\ra \infty$ we have
\[
\tau_\infty\lb Q_1\circ Q_2\rb \leq \|Q_1\circ Q^*_1\|^{\frac{1}{2}}_{N(Y^*\ra Y)}\|Q^*_2\circ Q_2\|^{\frac{1}{2}}_{N(X\ra X^*)},
\]
and the proof is finished.
\end{proof}

\begin{rem}
Here is a reformulation of Lemma \ref{lem:UpperBound}: given a map $T : X \to Y$, we have
\[ \tau_\iy(T) \leq \inf \left\{ \left\| \sum x_i^*(\cdot) x_i^* \right\|^{1/2}_{N(X \to X^*)} 
\left\| \sum y_i(\cdot) y_i \right\|^{1/2}_{N(Y^* \to Y)}
\right\} \leq \|T\|_{N(X \to Y)}, \]
where the infimum is taken over decompositions $T(\cdot) = \sum x_i^*(\cdot) y_i$, with $(x_i^*) \subset X^*$ and $(y_i) \in Y$.
\end{rem}

We will often apply Lemma \ref{lem:UpperBound} by choosing $Q_1$ or $Q_2$ to be formal identity operators between normed spaces of the same dimension. In this case, techniques involving the John and/or Loewner ellipsoids can be used to obtain upper bounds that are often tight. We will use this approach in the following sections.

\section{\Names of normed spaces} \label{sec:rho}

In this section, we study the \name $\rho_\infty(X) = \tau_\infty(\mathrm{id}_X)$ of a normed space $X$. We first gather some elementary properties. 

\begin{prop} \label{prop:properties-rho}
For $n$-dimensional normed spaces $X$, $Y$ we have: 
\begin{enumerate}
    \item $\rho_k(X)=\rho_k(X^*)$ for every $k \in \N \cup \{\iy\}$.
    \item $\rho_k(X) \leq \mathrm{d}(X,Y) \rho_k(Y)$ for every $k \in \N \cup \{\iy\}$.
    \item $\rho_k(X) \leq n^{1-1/k}$ for every $k \in \N$, and $\rho_\infty(X)\leq n$.
\end{enumerate}
\end{prop}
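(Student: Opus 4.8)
The plan is to prove the three items of Proposition~\ref{prop:properties-rho} by specializing the already-established general facts about $\tau_k$ to the case $T = \mathrm{id}_X$, where $\rho_k(X) = \tau_k(\mathrm{id}_X)$.

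\emph{Item (1).} First I would apply Lemma~\ref{lem:tau-duality} to $T = \mathrm{id}_X$. Since $(\mathrm{id}_X)^* = \mathrm{id}_{X^*}$, that lemma gives $\tau_k(\mathrm{id}_X) = \tau_k(\mathrm{id}_{X^*})$ for every $k \in \N \cup \{\iy\}$, which is exactly $\rho_k(X) = \rho_k(X^*)$. Nothing more is needed here.

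\emph{Item (2).} Let $U : Y \to X$ be a linear bijection. Then $\mathrm{id}_X = U \circ \mathrm{id}_Y \circ U^{-1}$ as a map $X \to X$, but to apply the ideal property cleanly I would instead write $\mathrm{id}_X = U \circ \mathrm{id}_Y \circ U^{-1}$ where $U^{-1} : X \to Y$ and $U : Y \to X$; the ``middle'' operator $\mathrm{id}_Y : Y \to Y$. By Lemma~\ref{lem:ideal-property}, $\tau_k(\mathrm{id}_X) = \tau_k(U \circ \mathrm{id}_Y \circ U^{-1}) \leq \|U\|_{Y \to X}\, \tau_k(\mathrm{id}_Y)\, \|U^{-1}\|_{X \to Y}$. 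Taking the infimum over all such $U$ yields $\rho_k(X) \leq \mathrm{d}(X,Y)\, \rho_k(Y)$, for every $k \in \N \cup \{\iy\}$.

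\emph{Item (3).} For the bound $\rho_k(X) \leq n^{1-1/k}$ I would use \eqref{eq:tau-basic-bound}, namely $\tau_k(T) \leq \|T\|_{N(X \to Y)}$; wait --- that only gives $\rho_k(X) \leq \|\mathrm{id}_X\|_{N(X\to X)} = n$ by \eqref{eq:nuclear-norm-id}, with no gain of the factor $n^{-1/k}$. To get the sharper bound I would instead go back to the definition: $\rho_k(X)^k = \sup_z \|z\|_{\pi_k(X)} / \|z\|_{\e_k(X)}$, and use the standard comparison $\|z\|_{\pi_k(X)} \leq n^{k-1} \|z\|_{\e_k(X)}$ valid for any $n$-dimensional normed space. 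This last inequality is itself a consequence of the John/Loewner machinery (or can be proved directly): writing $\mathrm{id}_X = U \circ \mathrm{id}_{\ell_2^n} \circ V$ with $\|U\|\|V\| \le \sqrt n$ (Banach--Mazur distance to $\ell_2^n$ is at most $\sqrt n$), and combining with the easy fact that $\rho_k(\ell_2^n) \le n^{1-1/k}$ --- or more simply, observing $\|z\|_{\pi_k(X)} \le \dim(X^{\otimes k}) / (\text{something})$... Actually the cleanest route: combine item (2) with $\rho_k(\ell_2^n)$, but $\rho_k(\ell_2^n)$ is not yet computed in the excerpt, so instead I would argue directly. The projective norm is at most $n^{k-1}$ times the injective norm on $X^{\otimes k}$ because, fixing a basis, $\|z\|_{\pi_k(X)} \le (\text{number of coordinate directions that matter})$; concretely, one uses that for any crossnorm, $\|z\|_\pi \le n^{k-1}\|z\|_\varepsilon$ follows from $\|z\|_\pi \le n^{(k-1)/2}\|z\|_{\mathrm{HS}}$ and $\|z\|_{\mathrm{HS}} \le n^{(k-1)/2}\|z\|_\varepsilon$ after passing to a Euclidean structure --- but these introduce the wrong power unless $X$ is Euclidean.

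I now think the intended argument for (3) is simply: by \eqref{eq:tau-basic-bound} applied at level $1$, $\tau_1(\mathrm{id}_X) = 1$, and one uses the submultiplicativity relation $\rho_{k}(X)^{k} \le$ (something) --- no. The correct and simplest justification is the direct inequality $\|z\|_{\pi_k(X)} \le n^{k-1}\|z\|_{\varepsilon_k(X)}$ for every $z \in X^{\otimes k}$, which holds for \emph{any} $n$-dimensional normed space: pick Auerbach bases, expand $z = \sum_{j_1,\dots,j_k} c_{j_1\cdots j_k}\, e_{j_1}\otimes\cdots\otimes e_{j_k}$ with $|c_{j_1\cdots j_k}| \le \|z\|_{\varepsilon_k(X)}$ (each coefficient is the value of $z$ on a tensor product of functionals of norm $1$), hence $\|z\|_{\pi_k(X)} \le \sum |c_{j_1\cdots j_k}| \le n^k \|z\|_{\varepsilon_k(X)}$ --- giving $\rho_k(X)\le n$. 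To recover the extra $n^{-1/k}$, refine: by trace duality / the fact that in an Auerbach system the identity decomposes with $n$ terms, one gets $n^{k-1}$ in place of $n^k$. Then $\rho_k(X) \le (n^{k-1})^{1/k} = n^{1-1/k}$, and letting $k \to \infty$ (or invoking $\rho_\infty = \sup_k \rho_k \le \lim n^{1-1/k} = n$, using Lemma~\ref{lem:subadditivity}) gives $\rho_\infty(X) \le n$. The main obstacle is pinning down exactly the $n^{k-1}$ constant in the projective-vs-injective comparison; I expect the paper proves this via an Auerbach basis argument or by factoring $\mathrm{id}_X$ through $\ell_\infty^n$ and $\ell_1^n$ and using that $\mathrm{id}: \ell_\infty^n \to \ell_1^n$ has nuclear norm $n$ while $\|\mathrm{id}\|_{\varepsilon_k(\ell_\infty^n)\to\pi_k(\ell_1^n)}$ is controlled accordingly. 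Everything else is a one-line consequence of the lemmas already proved.
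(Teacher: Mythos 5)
Items (1) and (2) are correct and are exactly the paper's argument: Lemma~\ref{lem:tau-duality} applied to $\ident_X$ (noting $(\ident_X)^*=\ident_{X^*}$), and Lemma~\ref{lem:ideal-property} applied to the factorization $\ident_X = U\circ \ident_Y\circ U^{-1}$ followed by an infimum over bijections $U$.

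For item (3) there is a genuine gap. Your Auerbach-basis expansion of $z$ over all $k$ tensor legs gives $\|z\|_{\pi_k(X)}\leq n^k\|z\|_{\e_k(X)}$, hence $\rho_k(X)\leq n$ and (by $\rho_\infty=\sup_k\rho_k$) the bound $\rho_\infty(X)\leq n$ --- so the second half of (3) is fine. But the sharper claim $\rho_k(X)\leq n^{1-1/k}$ is never actually proved: the passage from $n^k$ to $n^{k-1}$ is asserted via ``trace duality / the identity decomposes with $n$ terms,'' which is not an argument. The paper's refinement is concrete and you should spell it out: expand $z$ over an Auerbach basis $(x_i)$ in only the first $k-1$ legs, keeping the last leg as a \emph{vector-valued} coefficient,
\[
z=\sum_{i_1,\dots,i_{k-1}} x_{i_1}\otimes\cdots\otimes x_{i_{k-1}}\otimes y_{i_1\dots i_{k-1}},
\qquad
y_{i_1\dots i_{k-1}}=(x_{i_1}^*\otimes\cdots\otimes x_{i_{k-1}}^*\otimes\ident_X)(z)\in X .
\]
Each $y_{i_1\dots i_{k-1}}$ satisfies $\|y_{i_1\dots i_{k-1}}\|_X=\sup_{\lambda\in B_{X^*}}|(x_{i_1}^*\otimes\cdots\otimes x_{i_{k-1}}^*\otimes\lambda)(z)|\leq\|z\|_{\e_k(X)}$ because the $x_i^*$ have dual norm $1$; since also $\|x_{i_j}\|_X=1$, the triangle inequality for $\pi_k$ gives $\|z\|_{\pi_k(X)}\leq\sum_{i_1,\dots,i_{k-1}}\|y_{i_1\dots i_{k-1}}\|_X\leq n^{k-1}\|z\|_{\e_k(X)}$, whence $\rho_k(X)\leq n^{1-1/k}$. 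The point you were missing is that the gain of one factor of $n$ comes from treating one tensor leg as an $X$-valued coefficient measured in the norm of $X$ rather than as $n$ scalar coefficients. Your alternative suggestion of routing through $\|z\|_{\mathrm{HS}}$ does not work for general $X$, as you yourself noted.
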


\begin{proof} (1) is Lemma \ref{lem:tau-duality} applied to $T = \ident_X$. The inequality (2) can be deduced from the ideal property (Lemma \ref{lem:ideal-property}) by optimizing over linear bijections $U : Y \to X$. To obtain (3), consider an Auerbach basis $(x_i)$ for $X$, i.e. such that $\|x_i\|=\|x_i^*\|_*=1$, where $(x_i^*)$ denotes the basis of $X^*$ dual to $(x_i)$. Any $z \in X^{\otimes k}$ can be expanded as
\[ z = \sum_{i_1,\dots,i_{k-1}} x_1 \otimes \dots \otimes x_{k-1} \otimes y_{i_1\dots i_{k-1}} \]
for some $y_{i_1,\dots,i_{k-1}} \in X$. We have 
\[ \|z\|_{\pi_k(X)} \leq \sum_{i_1,\dots,i_{k-1}} \|y_{i_1\dots i_{k-1}}\|_* \leq n^{k-1} \max_{i_1,\dots,i_{k-1}} 
\|y_{i_1\dots i_{k-1}}\|_* \leq n^{k-1} \|z\|_{\e_k(X)}
\]
and the result follows.
\end{proof}

The quantity $\rho_2(X)$ (or rather, its square $\rho_2(X)^2$) was studied extensively in \cite{ALPSW20}, where it has been shown that
\[ n^{\frac 14 -o(1)} \leq \rho_2(X) \leq \sqrt{n} ,\]
as $n = \dim (X)$ tends to infinity. Both of these estimates are sharp since we have, for example, $\rho_2(\ell_2^n) = \sqrt{n}$ and $\rho_2(\ell_1^n) \leq (2n)^{\frac 14}$.

\subsection{\Names of Euclidean spaces} 

Our first result determines the \name of a Euclidean space.

\begin{thm} \label{thm:euclidean}
For every $n \geq 1$, we have $\rho_\iy(\ell_2^n)=n$.
\end{thm}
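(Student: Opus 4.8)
The plan is to prove the two inequalities $\rho_\infty(\ell_2^n) \leq n$ and $\rho_\infty(\ell_2^n) \geq n$ separately; the upper bound is immediate from Proposition \ref{prop:properties-rho}(3), so the entire content is the lower bound. For the lower bound, I would use a probabilistic argument: exhibit, for each $k$, a tensor $z \in (\ell_2^n)^{\otimes k}$ for which $\|z\|_{\pi_k(\ell_2^n)} / \|z\|_{\e_k(\ell_2^n)}$ is close to $n^k$. The natural candidate is a random unit vector $z$ drawn uniformly from the sphere of $(\ell_2^n)^{\otimes k} \cong \ell_2^{n^k}$, so that $\|z\|_{\mathrm{HS}_k(\ell_2^n)} = 1$. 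Since the Hilbert--Schmidt norm equals the injective norm only on product vectors and is otherwise larger, and since $\|z\|_{\e_k} \leq \|z\|_{\mathrm{HS}_k} = 1$ always, it suffices to show two things: that the injective norm of a random $z$ is typically small (of order $\mathrm{poly}(k,n) \cdot c^k$ with $c$ as small as possible, ideally close to $1/\sqrt n \cdot \sqrt n = $ something controlled), and that the projective norm is typically large (close to $n^{k/2}$, or at least $n^{k/2}$ up to subexponential factors). Taking $k \to \infty$ then kills all polynomial and subexponential corrections and yields $\rho_\infty \geq n$.

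The key steps, in order: \textbf{(i)} Reduce to showing $\rho_k(\ell_2^n) \geq n^{1-o(1)}$ as $k \to \infty$, using $\rho_\infty = \sup_k \rho_k$ from Lemma \ref{lem:subadditivity}. \textbf{(ii)} Upper bound the injective norm of a random unit tensor. Here I would use an $\varepsilon$-net argument on the product of $k$ copies of the sphere $S^{n-1}$ (the set of product functionals $\lambda_1 \otimes \cdots \otimes \lambda_k$), combined with concentration of measure for the linear functional $z \mapsto \langle \lambda_1 \otimes \cdots \otimes \lambda_k, z\rangle$ on the sphere $S^{n^k - 1}$; a net of size $C^{nk}$ suffices, and a union bound gives $\|z\|_{\e_k(\ell_2^n)} \leq c^k$ for a constant $c$ with high probability, where $c$ can be taken arbitrarily close to (something like) $1$ — in any case, $c$ does not grow with $k$. \textbf{(iii)} Lower bound the projective norm of a random unit tensor. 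I would use trace duality: $\|z\|_{\pi_k(\ell_2^n)} \geq \Tr[(M^{\otimes k})(\text{something})]$ type bounds, or more directly, the duality $\|z\|_{\pi_k(\ell_2^n)} = \sup\{ \langle w, z \rangle : \|w\|_{\e_k(\ell_2^n)} \leq 1\}$, pairing $z$ with a cleverly chosen witness $w$. A cleaner route: test $z$ against $z$ itself, noting $\|z\|_{\pi_k} \geq \langle z, z\rangle / \|z\|_{\e_k} = 1/\|z\|_{\e_k}$ — but this only gives $c^{-k}$, not $n^{k}$, so it is too weak. Instead I would lower-bound the projective norm by comparing with the trace norm of the ``matricization'' of $z$ across a balanced bipartition of the $k$ factors: writing $z$ as a vector in $\ell_2^{n^{k/2}} \otimes \ell_2^{n^{k/2}}$ and using that the projective norm dominates the nuclear/trace norm of the corresponding operator, which for a random unit vector is typically of order $n^{k/2}/\mathrm{poly}$ by random matrix theory (the Marchenko--Pastur law: a random $n^{k/2} \times n^{k/2}$ unit-Frobenius-norm matrix with i.i.d.-like Gaussian entries has trace norm $\approx \frac{8}{3\pi} \cdot n^{k/4} \cdot n^{k/4} = \Theta(n^{k/2})$). \textbf{(iv)} Combine: with positive probability a single $z$ satisfies both bounds, so $\rho_k(\ell_2^n) \geq (n^{k/2}/\mathrm{poly}(k,n)) / c^{k} $...

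Here I realize the bipartition bound gives $n^{k/2}$, not $n^k$, which combined with $\|z\|_{\e_k} \leq c^k$ yields only $\rho_k \geq (n^{k/2}/c^k)^{1/k} = \sqrt n / c$ — far short of $n$. So the genuinely correct approach must extract the full factor $n^k$ from the projective norm, which means the injective norm of the witness, not a bipartition trace norm, is what controls things. The right move is: $\|z\|_{\pi_k(\ell_2^n)} \geq \langle z, w \rangle$ where $w$ is chosen with $\|w\|_{\e_k(\ell_2^n)}$ small and $\langle z, w\rangle$ large; taking $w = z$ gives $\|z\|_{\pi_k} \geq 1/\|z\|_{\e_k} \geq c^{-k}$ which after the $1/k$-th root gives $1/c$, again weak. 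The actual resolution — and the step I expect to be the main obstacle — is to show that for a \emph{random} $z$, BOTH $\|z\|_{\e_k(\ell_2^n)} \leq c_1^k$ AND $\|z\|_{\pi_k(\ell_2^n)} \geq c_2^k \cdot n^{k}/\mathrm{poly}$ simultaneously, where the projective lower bound comes not from trace-norm-across-a-cut but from iterating a one-step bound: the injective norm of $\ell_2^n$-tensors behaves multiplicatively badly, and a sharp entropy/volume computation (comparing the Hilbert--Schmidt ball to the injective ball in $(\ell_2^n)^{\otimes k}$, whose volume ratio is known to be $n^{k(1-o(1))/2} \cdot n^{k(1-o(1))/2}$ after dualizing) forces $\|z\|_{\pi_k}\|z\|_{\e_k}^{-1}$... — the cleanest rigorous path is the volumetric one: the volume radius of the injective ball $B_{\e_k(\ell_2^n)}$ and of the projective ball $B_{\pi_k(\ell_2^n)}$ can each be estimated (the injective ball is not much larger than the HS ball in an average sense, and by duality the projective ball is not much smaller, but their \emph{product of volume radii} satisfies a reverse bound forcing a typical vector to have the desired ratio). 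I would therefore carry out step (iii) via: lower bound $\int \|z\|_{\pi_k(\ell_2^n)}\, dz$ over the HS sphere from below by $n^{k(1-o(1))}$ using the Santaló-type inequality / volume-ratio estimate of $B_{\pi_k}$ against $B_2^{n^k}$, dual to the $\varepsilon$-net bound on $B_{\e_k}$; with step (ii) controlling $\|z\|_{\e_k}$ from above on a set of measure $> 1/2$, one such $z$ does both. The main obstacle is thus obtaining the sharp $n^{k(1-o(1))}$ lower bound on the typical projective norm — equivalently the sharp volume estimate for the projective ball of $(\ell_2^n)^{\otimes k}$ — since the crude bipartition/trace-norm bound only gives the exponent $k/2$ and loses a factor $\sqrt n$ per factor.
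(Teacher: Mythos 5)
There is a genuine gap, and it is located precisely at the point where you discard the correct argument. In step (ii) you claim that the net-plus-concentration argument only yields $\|z\|_{\e_k(\ell_2^n)} \leq c^k$ for a random unit tensor, with $c$ a constant ``close to $1$''. This is a substantial underestimate. A fixed product functional $\lambda_1\otimes\cdots\otimes\lambda_k$ is a \emph{unit vector} of $\ell_2^{n^k}$, so for $z$ uniform on the Hilbert--Schmidt sphere the pairing $|\langle \lambda_1\otimes\cdots\otimes\lambda_k, z\rangle|$ concentrates at scale $n^{-k/2}$ with tails $\exp(-c\,n^k t^2)$; a union bound over a net of the $k$-fold product of spheres (of cardinality $C^{nk}$) then costs only a factor polynomial in $k$ (for fixed $n$), giving $\|z\|_{\e_k(\ell_2^n)} \leq C_n\sqrt{k\log k}\,/\,n^{k/2}$. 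This is exactly the content of the paper's Lemma \ref{lem:random-entanglement}, which the paper invokes as a black box. Once you have this \emph{exponentially small} bound, the route you dismissed as ``too weak'' --- testing $z$ against itself --- closes the proof in one line: since $\ell_2^n$ is self-dual and $\e_k$, $\pi_k$ are dual norms on $(\ell_2^n)^{\otimes k}$, one has $\|z\|_{\mathrm{HS}}^2 = \langle z,z\rangle \leq \|z\|_{\e_k(\ell_2^n)}\,\|z\|_{\pi_k(\ell_2^n)}$, hence
\[
\rho_k(\ell_2^n)^k \;\geq\; \frac{\|z\|_{\pi_k(\ell_2^n)}}{\|z\|_{\e_k(\ell_2^n)}} \;\geq\; \left(\frac{\|z\|_{\mathrm{HS}}}{\|z\|_{\e_k(\ell_2^n)}}\right)^{2} \;\geq\; \frac{n^{k}}{C_n^2\, k\log k},
\]
and taking $k\to\infty$ gives $\rho_\infty(\ell_2^n)\geq n$. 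This is the paper's proof.

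Because you rejected this route on the basis of the wrong estimate, the remainder of your proposal drifts into approaches that either provably fall short (the balanced-bipartition trace-norm bound gives only the exponent $k/2$, i.e.\ $\rho_\infty \geq \sqrt{n}$, as you yourself observe) or are left as unresolved ``main obstacles'' (the volumetric lower bound on the typical projective norm). The lesson is that no separate lower bound on $\|z\|_{\pi_k}$ is needed at all: the duality $\langle z,z\rangle \leq \|z\|_{\e_k}\|z\|_{\pi_k}$ converts a sufficiently strong \emph{upper} bound on the injective norm of a single well-chosen (random) tensor into the full projective lower bound, and the factor $n^{k}$ arises as $(n^{k/2})^2$ from squaring the $n^{-k/2}$ decay of the injective norm.
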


Our proof of Theorem \ref{thm:euclidean} uses the following lemma, which is an immediate extension of the $n=2$ case which appears in \cite[Proposition 8.28]{aubrun2017alice}. Since it is based on a standard random construction, we omit the proof here. We denote by $\|\cdot\|_{\mathrm{HS}}$ the Hilbert--Schmidt norm on $(\ell_2^d)^{\otimes k}$.

\begin{lem} \label{lem:random-entanglement}
Given integers $n, k \geq 2$, there exists a tensor $z \in (\ell_2^n)^{\otimes k}$ such that 
$\|z\|_{\mathrm{HS}}=1$ and
\[ \|z\|_{\e_k(\ell^n_2)} \leq \frac{C_n \sqrt{k \log k}}{n^{k/2}}, \]
where $C_n$ is a constant which does not depend on $k$.
\end{lem}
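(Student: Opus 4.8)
The plan is to use the standard Gaussian tensor model together with a chaining bound. Write $D = n^k$ and identify $(\ell_2^n)^{\otimes k}$ with $\ell_2^D$ via the Hilbert--Schmidt inner product. Since $\ell_2^n$ is Euclidean and self-dual, the injective norm of a tensor $w\in(\ell_2^n)^{\otimes k}$ is a supremum over product unit vectors,
\[ \|w\|_{\e_k(\ell_2^n)} = \sup\big\{ |\langle u_1\otimes\cdots\otimes u_k, w\rangle| : u_i\in\ell_2^n,\ \|u_i\|_2 = 1\big\}, \]
where $\langle\cdot,\cdot\rangle$ is the Hilbert--Schmidt inner product. Let $\Sigma\subset\ell_2^D$ be the (symmetric) set of such product unit vectors; each $u\in\Sigma$ has $\|u\|_{\mathrm{HS}}=1$. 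I would take $G$ to be a standard Gaussian vector in $\ell_2^D$ (i.i.d.\ $N(0,1)$ coordinates in an orthonormal basis) and set $z = G/\|G\|_{\mathrm{HS}}$, which is uniform on the unit sphere. The claim then follows by showing that, with positive probability, $\|G\|_{\e_k(\ell_2^n)}$ is small while $\|G\|_{\mathrm{HS}}$ is large.

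The first and main step is to bound $\E\,\|G\|_{\e_k(\ell_2^n)} = \E\sup_{u\in\Sigma}\langle u, G\rangle$. This is the supremum of the canonical Gaussian process on $\Sigma$, whose increment metric $\big(\E(\langle u,G\rangle - \langle u',G\rangle)^2\big)^{1/2} = \|u - u'\|_{\mathrm{HS}}$ is the ambient Euclidean metric. I would control it by Dudley's entropy integral,
\[ \E\sup_{u\in\Sigma}\langle u, G\rangle \leq C\int_0^2\sqrt{\log N(\Sigma,\|\cdot\|_{\mathrm{HS}},\delta)}\,\d\delta, \]
using the covering estimate $\log N(\Sigma,\|\cdot\|_{\mathrm{HS}},\delta)\leq nk\log(3k/\delta)$. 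The latter comes from the telescoping bound $\|u_1\otimes\cdots\otimes u_k - u_1'\otimes\cdots\otimes u_k'\|_{\mathrm{HS}}\leq\sum_j\|u_j - u_j'\|_2$, so that a product of $(\delta/k)$-nets of the $k$ spheres $S^{n-1}$ yields a $\delta$-net of $\Sigma$. Carrying out the integral gives $\int_0^2\sqrt{nk\log(3k/\delta)}\,\d\delta\lesssim\sqrt{nk\log k}$, whence $\E\,\|G\|_{\e_k(\ell_2^n)}\leq C'\sqrt{nk\log k}$ for a universal constant $C'$.

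The second step is concentration. The map $G\mapsto\|G\|_{\e_k(\ell_2^n)}$ is $1$-Lipschitz on $\ell_2^D$ (a supremum of linear functionals indexed by unit vectors), so Gaussian concentration gives $\|G\|_{\e_k(\ell_2^n)}\leq 2\E\,\|G\|_{\e_k(\ell_2^n)}\leq 2C'\sqrt{nk\log k}$ with probability close to $1$. Independently, $\|G\|_{\mathrm{HS}}^2$ is a $\chi^2$ variable with $D = n^k$ degrees of freedom, hence $\|G\|_{\mathrm{HS}}\geq\tfrac12 n^{k/2}$ with probability close to $1$ as well. Both events therefore hold simultaneously on a set of positive probability, and for any $G$ in that set the normalized tensor $z = G/\|G\|_{\mathrm{HS}}$ satisfies $\|z\|_{\mathrm{HS}} = 1$ and
\[ \|z\|_{\e_k(\ell_2^n)} = \frac{\|G\|_{\e_k(\ell_2^n)}}{\|G\|_{\mathrm{HS}}}\leq\frac{4C'\sqrt{n}\,\sqrt{k\log k}}{n^{k/2}}, \]
which is the desired estimate with $C_n = 4C'\sqrt n$. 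In the complex case one argues identically after identifying $\C^{D}$ with $\R^{2D}$, absorbing the extra factors into $C_n$.

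The step I expect to be delicate is the first one. The naive route --- a single $\delta$-net at the scale $\delta\approx n^{-k/2}$ forced by the target, followed by a union bound --- produces $\log N(\Sigma,\|\cdot\|_{\mathrm{HS}},\delta)\approx\tfrac12 nk^2\log n$ and hence a bound of order $k\,n^{-k/2}$, which is off from the claimed $\sqrt{k\log k}\,n^{-k/2}$ by a power of $k$. It is precisely the multi-scale chaining in Dudley's integral that converts the per-factor net scale $\delta/k$ into a single factor $\sqrt{\log k}$ rather than $k$; verifying the covering numbers and the convergence of the entropy integral to $\sqrt{\log k}$ is the technical heart of the argument.
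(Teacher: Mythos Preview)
Your argument is correct and is precisely the ``standard random construction'' the paper alludes to: the paper omits the proof entirely, citing only the $n=2$ case from \cite[Proposition 8.28]{aubrun2017alice} and remarking that a uniformly chosen random unit vector is typically very entangled; your Gaussian-plus-normalization model is exactly that, and your Dudley entropy computation with the product-net covering bound $\log N(\Sigma,\delta)\leq nk\log(3k/\delta)$ fills in the details the paper leaves out. Your diagnosis of why a single-scale net falls short by a factor $\sqrt{k/\log k}$ is also accurate.
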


An improved version of Lemma \ref{lem:random-entanglement} can be proved in the real case: There is a tensor $z$ in the real space $(\ell_2^n)^{\otimes k}$ such that $\|z\|_{\mathrm{HS}} = 1$ and $|z\|_{\e_k(\ell^n_2)} \leq C_n/n^{k/2}$ (see~\cite{aubrun2017alice,paperA}). It seems to be unknown whether such an improvement is possible in the complex case, even for $n=2$ (see \cite[Problem 8.27]{aubrun2017alice}).

\begin{proof}[Proof of Theorem \ref{thm:euclidean}]
By Proposition \ref{prop:properties-rho}, we have $\rho_{\infty}(\ell_2^n)\leq n$ and we only need to prove that $\rho_{\infty}(\ell_2^n)\geq n$. Fix $k \geq 1$. The Hilbert--Schmidt norm on $(\ell_2^n)^{\otimes k}$ satisfies the inequality $\|z\|_{\mathrm{HS}}^2 \leq \|z\|_{\pi_k(\ell^n_2)}\|z\|_{\e_k(\ell^n_2)}$ for every $z \in (\ell_2^n)^{\otimes k}$. Therefore, we have
\[ 
\rho_k(\ell_2^n) \geq \sup_{z \in (\ell_2^n)^{\otimes k}} \left( \frac{\|z\|_{\mathrm{HS}}}{\|z\|_{\e_k(\ell^n_2)}} \right)^{2/k}.
\] 
Using Lemma \ref{lem:random-entanglement} and taking the limit $k \to \infty$ shows that $\rho_k(\ell_2^n)\geq n$.
\end{proof}

\subsection{Proof of Theorem~\ref{thm:Main1}} \label{sec:proof-th1}

Let $X$ be a $n$-di\-men\-sion\-al normed space. Since $\rho_\infty(\ell_2^n)=n$, we obtain the lower bound
\begin{equation}\label{equ:lowerBoundOnRhoInf}
\rho_{\infty}(X) \geq \frac{ \rho_{\infty}(\ell_2^n)}{\mathrm{d}_X} \geq \sqrt{n} 
\end{equation}
using Proposition \ref{prop:properties-rho} and Theorem \ref{thm:euclidean}.

It remains to show that $\rho_{\infty}(X)<n$ if we assume that $X$ is not Euclidean. Without loss of generality we may assume that $X=(\R^n,\|\cdot\|)$ such that $B_{\ell^n_2}$ is the Loewner ellipsoid of $X$. Identifying $X^*$ with $(\R^n,\|\cdot\|_{*})$ such that $\|x\|_*=\sup\lset |\braket{x}{y}|:\|y\|\leq 1\rset$ implies that $\|\cdot\|_* \leq \|\cdot\|_2$. Since $X^*$ is not Euclidean, there exists $x_1 \in \R^n$ such that $\|x_1\|_2=1$ and 
$\|x_1\|_*<1$. Next, we complete $\{x_1\}$ to an orthonormal basis $\lb x_1,\ldots ,x_n\rb$ of $\ell_2^n$ and note that $\|x_i\|_* \leq 1$ for every $i$. Since $\ident_{\R^n} = \sum^n_{i=1} x_i \braket{x_i}{\cdot}$, we have $\|\ident_{\R^n}\|_{N(X\ra X^*)}\leq \sum^n_{i=1} \|x_i\|^2_{*}<n$. Furthermore, by \eqref{eq:nuclear-norm-john} we have $\|\ident_{\R^d}\|_{N(X^*\ra X)} = n$. Using Lemma \ref{lem:UpperBound} for $Q_1=Q_2=\ident_{\R^n}$, we find that 
\[
\rho_\infty(X)= \tau_\infty(\ident_{\R^n}) \leq \|\ident_{\R^n}\|^{1/2}_{N(X\ra X^*)}\|\ident_{\R^n}\|^{1/2}_{N(X^*\ra X)} < n.
\] 
This finishes the proof.

\subsection{Proof of Theorem \ref{thm:enough-symmetries}} \label{sec:proof-th2}

We start with an easy lemma: If the John and Loewner ellipsoids are proportional, they realize the infimum in the Banach--Mazur distance to the Euclidean space.

\begin{lem}
\label{lem:john-loewner-distance}
Let $X$ be an $n$-dimensional normed space with John ellipsoid $\mathcal{E}$ and with Loewner ellipsoid $\alpha \mathcal{E}$ for some $\alpha \geq 1$. Then $\alpha = \mathrm{d}_X$.
\end{lem}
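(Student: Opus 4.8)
The plan is to establish the two inequalities $\mathrm{d}_X \leq \alpha$ and $\mathrm{d}_X \geq \alpha$ separately, after first normalizing so that the John ellipsoid $\mathcal{E}$ is the standard Euclidean ball $B_2^n$. This normalization costs nothing: choosing an invertible linear map $W$ with $W(\mathcal{E}) = B_2^n$ and replacing $X$ by the isometric space $\big(\R^n,\,\|W^{-1}(\cdot)\|_X\big)$, the John ellipsoid becomes $B_2^n$ and the Loewner ellipsoid becomes $\alpha B_2^n$ --- linear maps multiply all volumes by a common factor, so the maximal/minimal volume characterizations of the two ellipsoids are preserved, and both remain centered at the origin since $B_X$ is symmetric --- while $\mathrm{d}(X,\ell_2^n)$ is unchanged. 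After this reduction we have the sandwich $B_2^n \subseteq B_X \subseteq \alpha B_2^n$.

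For the upper bound, the inclusions $B_2^n \subseteq B_X \subseteq \alpha B_2^n$ say exactly that $\|x\|_X \leq \|x\|_2 \leq \alpha \|x\|_X$ for every $x \in \R^n$. Hence the formal identity $\mathrm{id}_{\R^n}$, regarded as a linear bijection $U : X \to \ell_2^n$, satisfies $\|U\|_{X \to \ell_2^n} \leq \alpha$ and $\|U^{-1}\|_{\ell_2^n \to X} \leq 1$, so that $\mathrm{d}_X \leq \|U\|_{X \to \ell_2^n}\,\|U^{-1}\|_{\ell_2^n \to X} \leq \alpha$.

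For the lower bound, let $U : X \to \ell_2^n$ be an arbitrary linear bijection and set $\mathcal{F} := U^{-1}(B_2^n)$, an origin-symmetric ellipsoid in $\R^n$. Unwinding the definitions of the operator norms, $\rho := \|U\|_{X \to \ell_2^n}$ is the least $\rho > 0$ with $B_X \subseteq \rho\mathcal{F}$, and $\sigma := \|U^{-1}\|_{\ell_2^n \to X}$ is the least $\sigma > 0$ with $\mathcal{F} \subseteq \sigma B_X$; in particular $\tfrac{1}{\sigma}\mathcal{F} \subseteq B_X \subseteq \rho\mathcal{F}$. Since $\tfrac{1}{\sigma}\mathcal{F}$ is an ellipsoid contained in $B_X$, maximality of the John ellipsoid yields $\mathrm{vol}\big(\tfrac{1}{\sigma}\mathcal{F}\big) \leq \mathrm{vol}(B_2^n)$; since $\rho\mathcal{F}$ is an ellipsoid containing $B_X$, minimality of the Loewner ellipsoid yields $\mathrm{vol}(\rho\mathcal{F}) \geq \mathrm{vol}(\alpha B_2^n) = \alpha^n\,\mathrm{vol}(B_2^n)$. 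Dividing these two inequalities cancels $\mathrm{vol}(\mathcal{F})$ and $\mathrm{vol}(B_2^n)$ and gives $(\rho\sigma)^n \geq \alpha^n$, i.e.\ $\|U\|_{X \to \ell_2^n}\,\|U^{-1}\|_{\ell_2^n \to X} \geq \alpha$. Taking the infimum over all such $U$ gives $\mathrm{d}_X \geq \alpha$, which together with the previous paragraph proves $\mathrm{d}_X = \alpha$.

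The argument is elementary; the only point requiring care is the bookkeeping between the operator norms of $U$ and $U^{-1}$ and the two-sided sandwiching of $B_X$ by rescalings of $\mathcal{F}$, together with the observation that every ellipsoid appearing above is centered at the origin, so that the volume comparisons against the John and Loewner ellipsoids are legitimate. I do not foresee a genuine obstacle.
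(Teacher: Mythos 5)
Your proof is correct and follows essentially the same route as the paper's: the easy direction from the sandwich $B_2^n \subseteq B_X \subseteq \alpha B_2^n$, and the reverse direction by comparing volumes of an arbitrary sandwiching ellipsoid against the extremal volumes of the John and Loewner ellipsoids. The only difference is that you make explicit the (standard) dictionary between $\|U\|\,\|U^{-1}\|$ and two-sided containment of $B_X$ by dilates of $U^{-1}(B_2^n)$, which the paper leaves implicit.
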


\begin{proof} The inequality $\mathrm{d}_X\leq \alpha$ is immediate. Conversely, if an ellipsoid $\mathcal{F} \subset X$ and a number $\beta \geq 1$ satisfy $\mathcal{F} \subset B_X \subset \beta \mathcal{F}$, then by the definition of the John and Loewner ellipsoids,
\[ \mathrm{vol}(\mathcal{F}) \leq \mathrm{vol}(\mathcal{E}), \ \ \mathrm{vol}(\beta \mathcal{F}) \geq \mathrm{vol}(\alpha \mathcal{E}) \]
from which we infer that $\beta \geq \alpha$.
\end{proof}

\begin{proof}[Proof of Theorem \ref{thm:enough-symmetries}]
We already noticed in \eqref{equ:lowerBoundOnRhoInf} that the lower bound $\rho_{\infty}(x) \geq n/\mathrm{d}_X$ holds for every $n$-dimensional space $X$. Set $\alpha = \mathrm{d}_X$ and assume without loss of generality that $X=(\R^n,\|\cdot\|)$, with $B_{\ell^n_2}$ being the John ellipsoid for $X$. By Lemma \ref{lem:john-loewner-distance}, $\alpha B_{\ell^n_2}$ is the Loewner ellipsoid of $X$.

Since $\alpha B_{\ell^n_2}$ is the John ellipsoid of $X^*=(\R^n,\|\cdot\|_*)$ with $\|x\|_*=\sup\lset |\braket{x}{y}/\alpha^2|:\|y\|\leq 1\rset$, we can apply Theorem \ref{thm:john-lowner} to obtain a decomposition
\[
\ident_{\R^n}=\sum_{i}c_i v_i\braket{v_i}{\cdot},
\]
with $v_i\in\R^n$ satisfying $\|v_i\|_2=\alpha$ and $\|v_i\|=1$. Taking the trace shows that $\sum_i c_i = n/\alpha^2$ and we find that 
\[
\|\ident_{\R^d}\|_{N(X^*\ra X)} \leq \frac{n}{\alpha^2}.
\]
Together with \eqref{eq:nuclear-norm-john} (using that $B_{l^n_2}$ is the John ellipsoid of $X$) and by Lemma \ref{lem:UpperBound} for $Q_1=Q_2=\ident_{\R^n}$ this implies
\[ \rho_\infty(X) \leq \|\ident_{\R^n}\|^{1/2}_{N(X \to X^*)}\|\ident_{\R^n}\|^{1/2}_{N(X^* \to X)} \leq \frac{n}{\alpha}, \]
as needed.
\end{proof}

As a corollary to Theorem \ref{thm:enough-symmetries}, we compute the \name of many natural examples of finite-dimensional normed spaces:

\begin{cor}
For $n \geq 1$ and $1 \leq p \leq \infty$,
\begin{itemize}
    \item the \name of the space $\ell_p^n$ equals
    \[ \rho_\infty(\ell^n_p) = n^{1-\left| \frac 12 - \frac 1p \right|} \]
    \item the \name of the space $S_p^n$, defined as the space of $n \times n$ matrices equipped with Schatten $p$-norm $\|X\|_{S_p}=(\Tr\lbr |X|^p\rbr)^{1/p}$, equals
    \[ \rho_\infty(\mathcal{S}^n_p) = n^{2 -
\left| \frac 12 - \frac 1p \right|}. \]
\end{itemize}
\end{cor}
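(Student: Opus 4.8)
The plan is to deduce this corollary directly from Theorem~\ref{thm:enough-symmetries}, so the only real work is computing the Banach--Mazur distance $\mathrm{d}(X,\ell_2^n)$ for $X = \ell_p^n$ and $X = S_p^n$, and checking that these spaces fall under the hypotheses of that theorem. First I would note that both $\ell_p^n$ and $S_p^n$ have enough symmetries: for $\ell_p^n$ the signed permutation matrices preserve the $p$-norm and act irreducibly enough to force any commuting operator to be scalar (this is the standard fact quoted in Section~\ref{subsec:enough-symmetries}), while for $S_p^n$ the maps $A \mapsto UAV$ with $U,V$ unitary preserve every Schatten norm and again leave only multiples of the identity in their commutant. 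Hence by the discussion in Section~\ref{subsec:enough-symmetries}, the John and Loewner ellipsoids of these spaces are proportional, and Theorem~\ref{thm:enough-symmetries} applies, giving $\rho_\infty(X) = n/\mathrm{d}(X,\ell_2^n)$ in the first case and (since $\dim S_p^n = n^2$) $\rho_\infty(S_p^n) = n^2/\mathrm{d}(S_p^n, \ell_2^{n^2})$ in the second.

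Next I would compute the distances. For $\ell_p^n$, the classical computation (see \cite[\S\,15]{tomczak1989banach}) gives $\mathrm{d}(\ell_p^n,\ell_2^n) = n^{|1/2 - 1/p|}$: one uses the formal identity map together with the extremal $\ell_p$-versus-$\ell_2$ norm comparisons $\|\cdot\|_2 \le \|\cdot\|_p \le n^{1/p-1/2}\|\cdot\|_2$ for $p \le 2$ (and dually for $p \ge 2$), and checks optimality via the fact that $\ell_p^n$ has enough symmetries so that the optimal $U$ in the Banach--Mazur infimum can be taken to be (a multiple of) the identity, reducing the problem to the one-dimensional optimization over diagonal scalings, which is pinned down by testing on a single coordinate vector and on the all-ones vector. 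Substituting into $\rho_\infty(\ell_p^n) = n/\mathrm{d}(\ell_p^n,\ell_2^n)$ yields $n^{1 - |1/2 - 1/p|}$. For $S_p^n$, the analogous and equally classical fact is $\mathrm{d}(S_p^n, \ell_2^{n^2}) = \mathrm{d}(S_p^n, S_2^n) = n^{|1/2 - 1/p|}$, since $S_2^n$ is exactly the Hilbert--Schmidt space $\ell_2^{n^2}$ and the norm inequalities $\|\cdot\|_{S_2} \le \|\cdot\|_{S_p} \le n^{1/p - 1/2}\|\cdot\|_{S_2}$ (for $p \le 2$) are sharp, attained on rank-one matrices and on the identity matrix respectively. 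Plugging into $\rho_\infty(S_p^n) = n^2/\mathrm{d}(S_p^n,\ell_2^{n^2})$ gives $n^{2 - |1/2 - 1/p|}$.

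I do not expect a genuine obstacle here: the corollary is essentially a dictionary lookup combined with Theorem~\ref{thm:enough-symmetries}. The one point that deserves a careful sentence rather than a wave of the hand is the verification that $S_p^n$ has enough symmetries (and hence proportional John/Loewner ellipsoids) — one should remark that the group generated by $A \mapsto UAV^{*}$ already suffices, because its commutant in $L(S_p^n, S_p^n)$ consists only of scalars by Schur's lemma applied to the $U(n)\times U(n)$ action on $\mathbb{C}^n \otimes \overline{\mathbb{C}^n}$. A secondary point worth stating explicitly is that the unitarily invariant inner product singled out by "enough symmetries" is, up to scaling, the Hilbert--Schmidt inner product, so that the relevant Euclidean space against which one measures the Banach--Mazur distance really is $\ell_2^{n^2}$ with the standard inner product; this makes the identification $S_2^n = \ell_2^{n^2}$ legitimate. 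With those two remarks in place, the stated formulas follow immediately.
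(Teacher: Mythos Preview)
Your proposal is correct and follows exactly the approach the paper intends: the corollary is stated without proof as an immediate consequence of Theorem~\ref{thm:enough-symmetries} together with the standard Banach--Mazur distance computations $\mathrm{d}(\ell_p^n,\ell_2^n)=n^{|1/2-1/p|}$ and $\mathrm{d}(S_p^n,\ell_2^{n^2})=n^{|1/2-1/p|}$. The details you supply (enough symmetries for both families, the extremal vectors/matrices witnessing the distances) are precisely the implicit ingredients.
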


\section{\Names of operators} \label{sec:tau=nuclear-for-euclidean}

\subsection{Proof of Theorem \ref{thm:main-nuclear-norm}}\label{sec:ProofofMain-nuclear-norm}
We start by restating Theorem \ref{thm:main-nuclear-norm}:

\begin{thm}[Theorem \ref{thm:main-nuclear-norm}, restated]
\label{thm:tauRedIdentity}
Consider $n\in\N$ and a finite-dimensional normed space $Y$. For every operator $T \in L(\ell_2^n,Y)$ or $T \in L(Y,\ell_2^n)$, we have
\begin{equation} \label{eq:tauRedIdentity}
\tau_{\infty}(T) = \|T\|_{N}.
\end{equation}
\end{thm}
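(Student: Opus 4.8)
The plan is to prove the two inequalities in the statement separately. The bound $\tau_\infty(T)\le\|T\|_N$ is already \eqref{equ:tauInfUpperLower}, so the content is the reverse inequality $\tau_\infty(T)\ge\|T\|_N$. By Lemma \ref{lem:tau-duality} together with the identity $\|T\|_{N(X\to Y)}=\|T^*\|_{N(Y^*\to X^*)}$ (a consequence of trace duality \eqref{eq:trace-duality}), applying the statement to $T^*$ reduces the case $T\in L(Y,\ell_2^n)$ to the case $T\in L(\ell_2^n,Y)$, so I assume throughout that the \emph{domain} is Euclidean, $T:\ell_2^n\to Y$, and write $\tau:=\|T\|_N>0$ (the case $T=0$ being trivial). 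The first step is to use trace duality to pick $Q\in L(Y,\ell_2^n)$ with $\|Q\|_{Y\to\ell_2^n}\le1$ and $\Tr[Q\circ T]=\tau$ (after multiplying $Q$ by a unimodular scalar we may take this trace to be a nonnegative real). Set $R:=Q\circ T\in L(\ell_2^n,\ell_2^n)$. By the ideal property of the nuclear norm, $\|R\|_{N(\ell_2^n\to\ell_2^n)}\le\|Q\|\,\|T\|_N=\tau$, so in the chain $\tau=\Tr[R]\le|\Tr[R]|\le\|R\|_{N(\ell_2^n\to\ell_2^n)}\le\tau$ every inequality is an equality; since $\Re\Tr[R]=\|R\|_{N(\ell_2^n\to\ell_2^n)}$ (the latter being the trace norm) forces positivity, $R$ is positive semidefinite with $\Tr[R]=\tau$. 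Fix an orthonormal eigenbasis $(e_i)_{i=1}^n$ of $R$, with $Re_i=\lambda_i e_i$, $\lambda_i\ge0$ and $\sum_i\lambda_i=\tau$.

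The heart of the argument is a random ``flat'' entangled tensor built \emph{in this eigenbasis}. For $k\ge2$ I would set $z:=n^{-k/2}\sum_{I\in[n]^k}\theta_I\,e_{i_1}\otimes\cdots\otimes e_{i_k}$, where the $\theta_I$ are i.i.d.\ Rademacher signs (uniform on the unit circle in the complex case). Then $\|z\|_{\mathrm{HS}}=1$ deterministically, and the net/union‑bound argument behind Lemma \ref{lem:random-entanglement} — which is insensitive to the choice of orthonormal basis, injective norms on $\ell_2^n$ being unitarily invariant — gives, with positive probability, $\|z\|_{\e_k(\ell_2^n)}\le\delta_k$ for some $\delta_k\le C_n\sqrt{k\log k}\,n^{-k/2}$ with $C_n$ independent of $k$; fix such a $z$. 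The point of choosing the random signs in the eigenbasis of $R$ is that then $R^{\otimes k}$ is diagonal in the orthonormal basis $\{e_{i_1}\otimes\cdots\otimes e_{i_k}\}_I$ while every coordinate of $z$ has modulus $n^{-k/2}$, so the relevant quadratic form is computed exactly, with no probabilistic loss:
\[\langle z,\,R^{\otimes k}z\rangle_{\mathrm{HS}}=n^{-k}\sum_{I\in[n]^k}\prod_{j=1}^k\lambda_{i_j}=n^{-k}\Big(\sum_{i=1}^n\lambda_i\Big)^{k}=\frac{\tau^k}{n^k}.\]

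Next I would convert this into a lower bound on $\|T^{\otimes k}z\|_{\pi_k(Y)}$ by testing against $\Lambda:=(Q^*)^{\otimes k}(z)\in(Y^*)^{\otimes k}$ (with a complex conjugate inserted in the complex case). The metric mapping property of $\e$ gives $\|\Lambda\|_{\e_k(Y^*)}\le\|Q^*\|^k\|z\|_{\e_k(\ell_2^n)}\le\delta_k$, while unwinding the definitions (using $\langle Q^*a,y\rangle_{Y^*,Y}=\langle a,Qy\rangle$ factorwise) yields $\langle\Lambda,T^{\otimes k}z\rangle=\langle z,(QT)^{\otimes k}z\rangle_{\mathrm{HS}}=\tau^k/n^k$. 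By the duality between $\pi_k(Y)$ and $\e_k(Y^*)$,
\[\tau_k(T)^k\ \ge\ \frac{\|T^{\otimes k}z\|_{\pi_k(Y)}}{\|z\|_{\e_k(\ell_2^n)}}\ \ge\ \frac{|\langle\Lambda,T^{\otimes k}z\rangle|}{\|\Lambda\|_{\e_k(Y^*)}\,\|z\|_{\e_k(\ell_2^n)}}\ \ge\ \frac{\tau^k}{n^k\delta_k^{2}}\ \ge\ \frac{\tau^k}{C_n^{2}\,k\log k}.\]
Taking $k$‑th roots, letting $k\to\infty$, and recalling $\tau_\infty(T)=\sup_k\tau_k(T)$ from Lemma \ref{lem:subadditivity}, this gives $\tau_\infty(T)\ge\tau=\|T\|_N$, which together with \eqref{equ:tauInfUpperLower} proves the theorem.

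The main obstacle — and the only point where the Euclidean hypothesis on $X$ is genuinely used — is producing the tensor $z$: it must at once be highly entangled in the strong sense of Lemma \ref{lem:random-entanglement} (injective norm of order $n^{-k/2}$, as opposed to the order $n^{-k/4}$ achieved by the maximally entangled state, which would only yield bounds strictly weaker than $\|T\|_N$ in general) \emph{and} have all its coordinates of equal modulus in the eigenbasis of $R$, so that $\langle z,R^{\otimes k}z\rangle_{\mathrm{HS}}$ equals $\tau^k/n^k$ on the nose rather than a quantity that — for a generic unit vector — has the correct expectation but far too large a variance to be of any use. The Rademacher/random‑phase construction achieves both simultaneously; the only cost is that one must re‑inspect, rather than merely cite, the probabilistic estimate underpinning Lemma \ref{lem:random-entanglement}, for instance by phrasing that lemma for random‑sign tensors from the outset or by recording the needed variant as a separate lemma.
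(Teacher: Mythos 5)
Your proposal is correct, but it takes a genuinely different route from the paper. The paper's proof is an averaging argument: for an arbitrary contraction $Q : Y \to \ell_2^n$ it twirls $Q\circ T$ over the orthogonal group, so that by \eqref{eq:twirling} the average $R$ equals $\frac{1}{n}\Tr[QT]\,\mathrm{id}_{\ell_2^n}$; the triangle inequality plus the ideal property give $\tau_\infty(R)\le\tau_\infty(T)$, and then Theorem \ref{thm:euclidean} ($\rho_\infty(\ell_2^n)=n$) is invoked as a black box to conclude $\Tr[QT]\le\tau_\infty(T)$ for \emph{every} such $Q$, whence the claim by trace duality. You instead fix the trace-duality \emph{maximizer} $Q$, observe that $R=QT$ is then forced to be positive semidefinite (a nice point the paper never needs, since the averaging argument works for non-normal $QT$), and build an explicit witness tensor in the eigenbasis of $R$, testing $T^{\otimes k}z$ against $(Q^*)^{\otimes k}\bar z$. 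In effect you re-prove Theorem \ref{thm:euclidean} in a weighted form rather than citing it. What your route buys is an explicit quantitative rate, $\tau_k(T)^k\ge \|T\|_N^k/(C_n^2\,k\log k)$, and it avoids Haar integration; what the paper's route buys is modularity and brevity, and in particular it never needs any structural information about the entangled tensor beyond the bare existence statement of Lemma \ref{lem:random-entanglement}.

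The one point requiring care, which you correctly flag yourself, is that Lemma \ref{lem:random-entanglement} as stated only asserts the \emph{existence} of a unit tensor with small injective norm; it says nothing about its coordinates in a prescribed orthonormal basis, and your computation of $\langle z, R^{\otimes k}z\rangle=\tau^k/n^k$ genuinely needs all coordinates of $z$ to have modulus exactly $n^{-k/2}$ in the eigenbasis of $R$. The random-sign (or random-phase) variant you propose is true and provable by the same net-plus-concentration argument (the injective norm on $(\ell_2^n)^{\otimes k}$ being invariant under $U_1\otimes\cdots\otimes U_k$, the choice of basis is immaterial), but it is a strictly stronger statement than the cited lemma and would need to be recorded and proved as a separate lemma for your argument to be complete. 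With that lemma in hand, every remaining step of your proof — the reduction to Euclidean domain via Lemma \ref{lem:tau-duality}, the positivity of $QT$ at the maximizer, the duality bound $\|T^{\otimes k}z\|_{\pi_k(Y)}\ge |\langle\Lambda,T^{\otimes k}z\rangle|/\|\Lambda\|_{\e_k(Y^*)}$, and the passage to the limit — is sound.
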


\begin{proof}
The inequality $\tau_\infty(T) \leq \|T\|_N$ holds by \eqref{equ:tauInfUpperLower}.
Consider $T \in L(\ell_2^n,Y)$, and let $Q \in L(Y,\ell_2)$ such that $\|Q\| \leq 1$. Let $\mathrm{d}U$ be the Haar measure on the orthogonal group $O(n)$. Define $R : \ell_2^n \to \ell_2^n$ by
\begin{equation}
R := \int_{O(n)} U^{-1}\circ Q\circ T\circ U \text{d}U .
\label{equ:Rdef}
\end{equation}
By \eqref{eq:twirling}, we have $R = \alpha \ident_{\ell_2^n}$ with $\alpha = \Tr [QT]/n$.
On the other hand, for every $k\in\N$, we have
\[
R^{\otimes k} = \int_{O(n)} \cdots \int_{O(n)} (U_1^{-1} \otimes \cdots \otimes U_k^{-1}) \circ (Q \circ T)^{\otimes k} \circ (U_1 \otimes \cdots \otimes U_k) \mathrm{d} U_1 \dots \mathrm{d}U_k.
\]

Using the triangle inequality, the ideal property of the operator norm, and taking the $k$th root shows that $\tau_k(R) \leq \tau_k(Q \circ T)$. Taking the limit $k \to \infty$ gives
\[ \tau_\infty(R) \leq \tau_\infty(Q \circ T) \leq \tau_\infty(T). \]
We have $\tau_\infty(R) = \alpha \tau_\infty(\ident_{\ell_2^n}) = \Tr [QT]$, using Theorem 
\ref{thm:euclidean}. We proved that
\begin{equation} \label{eq:trace-duality-in-proof} \sup_{\|Q\| \leq 1} \Tr \lbr QT\rbr \leq \tau_\infty(T) \end{equation}
By trace duality (cf \eqref{eq:trace-duality}), the left-hand side of \eqref{eq:trace-duality-in-proof} is exactly the nuclear norm $\|T\|_N$, completing the proof.

If $T \in L(Y,\ell_2^n)$, the proof is completely similar by considering the supremum of $\Tr \lbr QT\rbr$ over $Q : \ell_2^n \to Y$ such that $\|Q\| \leq 1$.
\end{proof}

The following corollary establishes a general lower bound on $\tau_\infty$ in terms of the nuclear norm. This also generalizes the bound from \eqref{equ:lowerBoundOnRhoInf} on $\rho_\infty$.

\begin{cor}\label{cor:tauInftyLowerBound}
Let $X, Y$ be finite dimensional normed spaces. We have
\begin{equation} 
\tau_{\infty}(T) \geq \frac{\|T\|_{N}}{\min(d_X,d_Y)} ,
\end{equation}
for every operator $T \in L(X,Y)$.
\end{cor}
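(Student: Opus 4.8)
The plan is to reduce everything to the Euclidean case already settled by Theorem~\ref{thm:main-nuclear-norm}, using the ideal property of the \name (Lemma~\ref{lem:ideal-property}) together with the ideal property of the nuclear norm.

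First I would work on the domain side. Set $n = \dim X$ and pick a linear bijection $U : \ell_2^n \to X$ achieving (or nearly achieving) the Banach--Mazur distance, so that $\|U\|_{\ell_2^n \to X}\,\|U^{-1}\|_{X \to \ell_2^n} = d_X$; in finite dimension the infimum defining $d_X$ is attained, but one could equally use a near-optimal $U$ and let the error tend to $0$ at the end. The composition $T \circ U : \ell_2^n \to Y$ has Euclidean domain, so Theorem~\ref{thm:main-nuclear-norm} gives $\tau_\infty(T \circ U) = \|T \circ U\|_{N}$.

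Next I would combine two estimates. Applying Lemma~\ref{lem:ideal-property} to the factorization $T \circ U = \mathrm{id}_Y \circ T \circ U$ yields $\tau_\infty(T \circ U) \leq \|U\|\,\tau_\infty(T)$, while applying the ideal property of the nuclear norm to $T = (T \circ U)\circ U^{-1}$ yields $\|T\|_N \leq \|T \circ U\|_N\,\|U^{-1}\|$. Chaining these with the identity from Theorem~\ref{thm:main-nuclear-norm} gives
\[
\tau_\infty(T) \;\geq\; \frac{\tau_\infty(T \circ U)}{\|U\|} \;=\; \frac{\|T \circ U\|_N}{\|U\|} \;\geq\; \frac{\|T\|_N}{\|U\|\,\|U^{-1}\|} \;=\; \frac{\|T\|_N}{d_X}.
\]

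Finally I would run the symmetric argument on the range side: with $m = \dim Y$ and an optimal bijection $V : Y \to \ell_2^m$, the operator $V \circ T : X \to \ell_2^m$ has Euclidean range, so Theorem~\ref{thm:main-nuclear-norm} applies to it, and the two ideal-property estimates for $V \circ T = V \circ T \circ \mathrm{id}_X$ and $T = V^{-1} \circ (V \circ T)$ give $\tau_\infty(T) \geq \|T\|_N / d_Y$ in the same way. Taking the larger of the two lower bounds yields $\tau_\infty(T) \geq \|T\|_N / \min(d_X, d_Y)$. I do not expect any genuine obstacle: the argument is a direct assembly of Theorem~\ref{thm:main-nuclear-norm} and the ideal properties, and the only mildly delicate point is the normalization and attainment of the Banach--Mazur distance, which is routine in the finite-dimensional setting.
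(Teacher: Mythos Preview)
Your proof is correct and follows essentially the same approach as the paper: compose $T$ with a Banach--Mazur-optimal bijection to/from a Euclidean space, invoke Theorem~\ref{thm:main-nuclear-norm} on the resulting operator, and chain the ideal properties of $\tau_\infty$ and of the nuclear norm. The paper's version is slightly more terse (it writes the chain $\|T\|_N \leq \|TU\|_N\|U^{-1}\| = \tau_\infty(TU)\|U^{-1}\| \leq \tau_\infty(T)\|U\|\|U^{-1}\|$ in one line and dismisses the range-side case as analogous), but the logic is identical.
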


\begin{proof}
Assume that $\dim X=n$ and that $d_X\leq d_Y$ (if $d_Y>d_X$, then the proof works in the same way). Consider a bijection $U \in L(\ell_2^n,X)$. We have 
\[ \|T\|_N \leq \|TU\|_N \|U^{-1}\| = \tau_\infty(TU) \|U^{-1}\| \leq \tau_\infty(T) \|U\| \cdot \|U^{-1}\|\]
where the equality follows from Theorem \ref{thm:tauRedIdentity}, and the inequalities from the ideal property of $\tau_{\infty}$ and $\|\cdot\|_N$. The result follows by taking the infimum over $U$.
\end{proof}

\subsection{Normed spaces without the nuclear tensorization property}\label{sec:SpacesWithoutNTP}
Theorem \ref{thm:Main1} shows that for every non-Euclidean space $X$ the pair $(X,X)$ does not have the nuclear tensorization property (NTP, see Definition \ref{defn:NTP}). Moreover, Theorem \ref{thm:main-nuclear-norm} shows that $(X,Y)$ has the NTP when $X$ or $Y$ is Euclidean, and one may ask whether the converse holds. In the case of spaces of the same dimension, the following criterion shows that a large class of examples fails the NTP.

\begin{prop} \label{prop:john-avoids-loewner}
Let $X$, $Y$ be finite-dimensional normed spaces of the same dimension. Let $\mathcal{E}$ be the John ellipsoid of $X$ and $\mathcal{F}$ be the Loewner ellipsoid of $Y$. Assume that $T : X \to Y$ is a linear map such that $T(\mathcal{E})=\mathcal{F}$ and such that, whenever $x$ is a John contact point of $X$, $T(x)$ is not a Loewner contact point of $Y$. Then $\tau_\iy(T) < \|T\|_{N(X \to Y)}$ and therefore $(X,Y)$ fails the NTP.
\end{prop}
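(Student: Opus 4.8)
The plan is to use Lemma~\ref{lem:UpperBound} with both $Q_1$ and $Q_2$ chosen to be formal identity maps, exactly as in the proofs of Theorems~\ref{thm:Main1} and~\ref{thm:enough-symmetries}, but now tracking the inequality carefully enough to see it is \emph{strict}. Normalize so that $X=(\R^n,\|\cdot\|_X)$ with $B_2^n$ its John ellipsoid $\mathcal{E}$, and so that $Y=(\R^n,\|\cdot\|_Y)$ with $B_2^n$ its Loewner ellipsoid $\mathcal{F}$ (this is possible after composing $T$ with linear isomorphisms, which changes neither the hypothesis nor the conclusion by the ideal property, Lemma~\ref{lem:ideal-property}). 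Under this normalization the hypothesis $T(\mathcal{E})=\mathcal{F}$ says precisely that $T$ is an orthogonal transformation of $\R^n$, so in particular $T^*=T^{-1}$ and $T^*\circ T=\ident_{\R^n}$, while $T\circ T^*=\ident_{\R^n}$ as well. Then Lemma~\ref{lem:UpperBound} applied to $Q_1=T$ (mapping $\ell_2^n\to Y$) and $Q_2=\ident_{\R^n}$ (mapping $X\to\ell_2^n$) gives
\[
\tau_\infty(T)\;\leq\;\|T\circ T^*\|_{N(Y^*\to Y)}^{1/2}\,\|\ident_{\R^n}\|_{N(X\to X^*)}^{1/2}
=\|\ident_{\R^n}\|_{N(Y^*\to Y)}^{1/2}\,\|\ident_{\R^n}\|_{N(X\to X^*)}^{1/2}.
\]
Since $B_2^n$ is the John ellipsoid of $X$, equation~\eqref{eq:nuclear-norm-john} gives $\|\ident_{\R^n}\|_{N(X\to X^*)}=n$; since $B_2^n$ is the Loewner ellipsoid of $Y$, it is the John ellipsoid of $Y^*$, so again by~\eqref{eq:nuclear-norm-john} $\|\ident_{\R^n}\|_{N(Y^*\to Y)}=n$. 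Hence $\tau_\infty(T)\le n$. On the other hand $\|T\|_{N(X\to Y)}=n$: this follows from trace duality~\eqref{eq:trace-duality} since $\Tr[T^{-1}\circ T]=\Tr[\ident_{\R^n}]=n$ and $\|T^{-1}\|_{Y\to X}\le 1$ (because $T^{-1}(\mathcal{F})=\mathcal{E}\subset B_X$, i.e. $T^{-1}$ maps $B_Y\subset\mathcal{F}$ into $\mathcal{E}\subset B_X$... more precisely $T^{-1}$ maps the Loewner ball $B_2^n\supset B_Y$ onto $B_2^n\supset$ but one checks $\|T^{-1}\|\le1$ directly from $B_Y\subset B_2^n$ and $T^{-1}(B_2^n)=B_2^n\subset B_X$... actually $B_Y\subset B_2^n$ gives $T^{-1}(B_Y)\subset T^{-1}(B_2^n)=B_2^n$, and we need this inside $B_X$, which holds since $B_2^n=\mathcal{E}\subset B_X$). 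So the bound above shows $\tau_\infty(T)\le n=\|T\|_{N(X\to Y)}$, and equality would force equality throughout Lemma~\ref{lem:UpperBound}, i.e.\ $\tau_\infty(T)=n$.

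The real content is to rule out $\tau_\infty(T)=n$, and this is where the contact-point hypothesis enters; I expect this to be the main obstacle. The idea is that $\tau_\infty(T)=n$ can only happen if the upper bound from Lemma~\ref{lem:UpperBound} is tight in a very rigid way, and one must extract from tightness a John contact point of $X$ that is sent to a Loewner contact point of $Y$. Concretely, $\tau_\infty(\ident_{\ell_2^n})=n$ is achieved (Theorem~\ref{thm:euclidean}) using tensors $z\in(\ell_2^n)^{\otimes k}$ that are nearly extremal for the Hilbert--Schmidt versus injective norm ratio. Feeding such a $z$ through the two factorization inequalities in the proof of Lemma~\ref{lem:UpperBound}, one sees that to have $\|T^{\otimes k}z\|_{\pi_k(Y)}$ close to $n^k\|z\|_{\e_k(X)}$ one needs: first, $\|z\|_{\e_k(X)}$ close to $n^{-k/2}\|z\|_{\mathrm{HS}}$, which by the definition of the injective norm on $\e_k(X)$ and a compactness argument forces the ``test functionals'' $\lambda_1,\dots,\lambda_k\in B_{X^*}$ achieving near-equality in $\|z\|_{\e_k(X)}$ to be essentially John contact points of $X^*$ — equivalently, $z$ concentrates on tensor powers of John contact points of $X$; and second, the dual reasoning applied to $T\circ(\text{these})$ in $\pi_k(Y)$ forces the images under $T$ of those contact points to be Loewner contact points of $Y$ (since $\|\ident_{\R^n}\|_{N(Y^*\to Y)}=n$ is attained only via the John decomposition of $Y^*$, i.e.\ via Loewner contact points of $Y$). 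This would contradict the hypothesis.

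To make the rigidity argument clean I would phrase it via a single approximate-extremizer sequence $z_k$ with $\|z_k\|_{\mathrm{HS}}=1$ and $\|z_k\|_{\e_k(X)}=n^{-k/2+o(k)}$, and then carefully chase the two inequalities
\[
\|z_k\|_{\mathrm{HS}}^2=[(T^*\circ T)^{\otimes k}z_k](z_k)\le\Big(\sum_i\|\tilde x_i\|_{X^*}\|x_i\|_{X^*}\Big)^k\|z_k\|_{\e_k(X)}^{?}\quad\text{and its $Y$-side analogue}
\]
(being precise about exponents), forcing, in the limit, that the optimal nuclear decompositions of $\ident_{\R^n}$ as a map $X\to X^*$ and $Y^*\to Y$ must be supported on common contact vectors $v_i$ with $T(v_i)$ a Loewner contact point of $Y$. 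An alternative, possibly shorter route: prove the contrapositive by a perturbation/strict-convexity argument — if every John contact point of $X$ has non-contact image, then for the decomposition $\ident_{\R^n}=\sum_i c_i v_i\langle v_i,\cdot\rangle$ from Theorem~\ref{thm:john-lowner} (John points $v_i$ of $X$), one has $\|T v_i\|_Y<\|Tv_i\|_L=\|v_i\|_2$ strictly, so a compactness argument over the (finitely many, or compact set of) contact points yields a uniform gap $\|Tv\|_Y\le(1-\delta)\|v\|_2$, and then re-running the factorization bound with this improvement gives $\tau_\infty(T)\le(1-\delta)^{c}n<n$ for some explicit $c>0$; since $\|T\|_{N(X\to Y)}=n$, the NTP fails. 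I expect the second route to be more robust, but it still requires care in showing the uniform gap propagates through the tensor-power estimate, which is the technical heart of the proof.
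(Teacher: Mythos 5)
Your first half is on target: normalizing so that $\mathcal{E}=\mathcal{F}=B_2^n$ and applying Lemma~\ref{lem:UpperBound} with identity factorizations together with \eqref{eq:nuclear-norm-john} to get $\tau_\infty(T)\le n$ is exactly what the paper does. But the second half has a genuine gap, and moreover aims the strictness at the wrong side of the inequality. You assert $\|T\|_{N(X\to Y)}=n$, but trace duality with the single test operator $Q=T^{-1}$ only gives $\|T\|_N\ge n$; you never produce a matching upper bound, and in fact under the hypotheses the claim is \emph{false}: the contact-point condition forces $\|T^{-1}\|_{Y\to X}<1$ strictly (for any $y$ with $\|y\|_Y=1$ one has $\|y\|_2\le 1$ and $\|T^{-1}y\|_X\le\|T^{-1}y\|_2=\|y\|_2\le 1$, and equality throughout would make $T^{-1}y$ a John contact point of $X$ mapped to a Loewner contact point of $Y$; compactness of the unit sphere then gives a uniform gap). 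Trace duality \eqref{eq:trace-duality} with $Q=T^{-1}/\|T^{-1}\|_{Y\to X}$ then yields $\|T\|_{N(X\to Y)}\ge n/\|T^{-1}\|_{Y\to X}>n\ge\tau_\infty(T)$, which finishes the proof in two lines. This is the paper's argument: the strict inequality lives entirely on the nuclear-norm side.

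Because you instead set $\|T\|_N=n$, you are forced into trying to prove $\tau_\infty(T)<n$, which you correctly identify as the ``technical heart'' but leave as an unexecuted sketch; neither rigidity route is carried out, so the proof is incomplete as written. Your second route also contains a sign error that would block it: since $B_Y\subseteq\mathcal{F}=B_2^n$, the Loewner norm satisfies $\|\cdot\|_L\le\|\cdot\|_Y$, so for a John contact point $v$ of $X$ one has $\|Tv\|_Y>\|Tv\|_L=\|v\|_2$, not $<$. That strict inequality in the correct direction is precisely what drives $\|T^{-1}\|_{Y\to X}<1$ above. In short: abandon the attempt to push $\tau_\infty(T)$ strictly below $n$ (the paper never does, and it is not needed) and instead show the nuclear norm strictly exceeds $n$.
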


\begin{proof}
Without loss of generality, assume that $X=(\R^n,\|\cdot\|_X)$, $Y=(\R^n,\|\cdot\|_Y)$, $\mathcal{E} = \mathcal{F} = B_2^n$ and $T = \ident_{\R^n}$. Applying Lemma \ref{lem:UpperBound} for $Q_1=Q_2=\ident_{\R^n}$ implies
\[ \tau_{\iy}(T) \leq \|\ident_{\R^n}\|^{1/2}_{N(X \to X^*)}
\|\ident_{\R^n}\|^{1/2}_{N(Y^* \to Y)} \leq n, \]
by using \eqref{eq:nuclear-norm-john} twice. 

Let $x \in \R^n$ such that $\|x\|_2 = 1$. We have $\|x\|_X \leq 1 \leq \|x\|_Y$. Since $x$ cannot be both a John contact point of $X$ and a Loewner contact point of $Y$, it follows that $\|x\|_X < \|x\|_Y$ and therefore $\|\ident_{\R^n}\|_{Y \to X} < 1$. By trace duality, we have
\[ n = \Tr (\ident_{\R^n}) \leq \|\ident_{\R^n}\|_{Y \to X} \|\ident_{\R^n}\|_{N(X \to Y)} \]
and therefore $\|T\|_{N(X \to Y)} > n \geq \tau_\infty(T)$.
\end{proof} 

Proposition \ref{prop:john-avoids-loewner} can be applied when $X$ (resp., $Y$) has few John (resp., Loewner) contact points. For example, real spaces with polyhedral unit balls have finitely many John and Loewner contact points; in this case the hypothesis of Proposition \ref{prop:john-avoids-loewner} is satisfies for a generic map $T$. 

We now consider the case of a pair $(\ell_\iy^2,Y)$.

\begin{prop} \label{prop:conjecture-square}
Let $Y$ be a finite-dimensional normed space. The pair $(\ell_\infty^2,Y)$ has the NTP if and only if $Y$ is Euclidean.
\end{prop}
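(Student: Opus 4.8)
The ``if'' direction is immediate: if $Y$ is Euclidean then $(\ell_\infty^2,Y)$ has the NTP by Theorem~\ref{thm:main-nuclear-norm}. The plan is therefore to prove the converse: assuming $Y$ is not Euclidean, I want to exhibit an operator $T:\ell_\infty^2\to Y$ with $\tau_\infty(T)<\|T\|_{N(\ell_\infty^2\to Y)}$.

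\emph{Step 1 (reduction to $\dim Y=2$).} Since $Y$ is not Euclidean, its norm violates the parallelogram law for some pair of vectors, which must then span a two-dimensional non-Euclidean subspace $Z\subseteq Y$. The key elementary fact I would record is the exact identity
\[ \|T\|_{N(\ell_\infty^2\to W)} = \|Te_1\|_W + \|Te_2\|_W \]
valid for every normed space $W$ and every $T\in L(\ell_\infty^2,W)$, where $e_1,e_2$ is the standard basis: ``$\leq$'' comes from the nuclear decomposition $T(\cdot)=e_1^*(\cdot)Te_1+e_2^*(\cdot)Te_2$ with $\|e_i^*\|_{(\ell_\infty^2)^*}=1$, and ``$\geq$'' from trace duality \eqref{eq:trace-duality}, noting that $Q\in L(W,\ell_\infty^2)$ has norm $\leq 1$ exactly when its two coordinate functionals lie in $B_{W^*}$. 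Hence if $T':\ell_\infty^2\to Z$ is a counterexample to the NTP for $(\ell_\infty^2,Z)$ and $T:=j\circ T'$ with $j:Z\hookrightarrow Y$ the isometric inclusion, then $\|T\|_{N(\ell_\infty^2\to Y)}=\|T'\|_{N(\ell_\infty^2\to Z)}$ by the identity (the relevant target norms agree on $Z$), while $\tau_\infty(T)\leq\tau_\infty(T')$ by the ideal property (Lemma~\ref{lem:ideal-property}); so $T$ is a counterexample for $(\ell_\infty^2,Y)$. It therefore suffices to treat $\dim Y=2$.

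\emph{Step 2 (the two-dimensional case).} Normalize $Y=(\R^2,\|\cdot\|_Y)$ so that its Loewner ellipsoid is $B_2^2$; then $\|x\|_Y\geq\|x\|_2$ for all $x$, and since $Y$ is not Euclidean there is a unit vector $u_0$ with $\|u_0\|_Y>1$. Let $u_0^\perp$ be a unit vector orthogonal to $u_0$ and let $T:\ell_\infty^2\to Y$ be the operator whose matrix in the standard bases is the orthogonal matrix with columns $u_0,u_0^\perp$. By the identity of Step~1, $\|T\|_{N(\ell_\infty^2\to Y)}=\|u_0\|_Y+\|u_0^\perp\|_Y>2$. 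To bound $\tau_\infty(T)$ I would factor $T=Q_1\circ Q_2$ with $Q_2=\ident_{\R^2}$ regarded as a map $\ell_\infty^2\to\ell_2^2$ and $Q_1$ the operator $T$ regarded as a map $\ell_2^2\to Y$, and apply Lemma~\ref{lem:UpperBound}:
\[ \tau_\infty(T)\leq \|Q_1\circ Q_1^*\|_{N(Y^*\to Y)}^{1/2}\,\|Q_2^*\circ Q_2\|_{N(\ell_\infty^2\to\ell_1^2)}^{1/2}. \]
Here $Q_2^*\circ Q_2=\ident_{\R^2}:\ell_\infty^2\to\ell_1^2$ has nuclear norm $2$ by \eqref{eq:nuclear-norm-john}, since $B_2^2$ is the John ellipsoid of $\ell_\infty^2$; and $Q_1\circ Q_1^*=\ident_{\R^2}:Y^*\to Y$ (because $T$ is orthogonal) has nuclear norm $2$ by \eqref{eq:nuclear-norm-john} applied to $Y^*$, whose John ellipsoid is $B_2^2$ (being the Loewner ellipsoid of $Y$, by the duality recalled in Section~\ref{subsection:john}). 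Thus $\tau_\infty(T)\leq\sqrt2\cdot\sqrt2=2<\|T\|_{N(\ell_\infty^2\to Y)}$, and $(\ell_\infty^2,Y)$ fails the NTP.

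\emph{Expected main obstacle.} The only genuinely delicate point is the choice of $T$ in Step~2. Proposition~\ref{prop:john-avoids-loewner} would require an orthonormal pair of directions \emph{both} avoiding the Loewner contact points of $Y$, and such a pair need not exist (for instance when the Loewner contact set is a pair of antipodal arcs of length slightly above $\pi/2$). What makes the argument go through is the exact value $\|T\|_{N(\ell_\infty^2\to Y)}=\|Te_1\|_Y+\|Te_2\|_Y$: it reduces the task to finding an orthonormal pair containing a \emph{single} non-contact direction — which exists for every non-Euclidean $Y$ — while the estimate $\tau_\infty(T)\leq 2$ is available for \emph{any} $T$ carrying the John ellipsoid of $\ell_\infty^2$ onto the Loewner ellipsoid of $Y$.
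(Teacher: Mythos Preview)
Your proof is correct but follows a different route from the paper. Both arguments rest on Lemma~\ref{lem:UpperBound} together with the identity $\|T\|_{N(\ell_\infty^2\to W)}=\|Te_1\|_W+\|Te_2\|_W$, but they differ in how the strict gap is created. The paper works directly in $Y$ (no dimensional reduction): it invokes Schoenberg's characterization of Euclidean norms to find unit vectors $x,y\in Y$ with $\|x+y\|^2+\|x-y\|^2<4$, sets $T(a,b)=ax+by$ so that $\|T\|_N=2$, and factors $T$ through $\ell_2^2$ via a Hadamard-type rotation to obtain $\tau_\infty(T)\leq(\|x+y\|^2+\|x-y\|^2)^{1/2}<2$. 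You instead reduce to $\dim Y=2$, normalize so that $B_2^2$ is the Loewner ellipsoid of $Y$, take an orthogonal $T$ hitting a non-contact direction, and get $\tau_\infty(T)\leq 2<\|T\|_N$. In effect, the paper fixes the nuclear norm at $2$ and pushes $\tau_\infty$ strictly below; you fix the bound $\tau_\infty\leq 2$ and push the nuclear norm strictly above. Your approach avoids the external Schoenberg reference and ties the result more visibly to the John/Loewner machinery already in use (it is the $(\ell_\infty^2,Y)$ analogue of the argument in Section~\ref{sec:proof-th1}); the paper's approach avoids the reduction step and yields the sharper quantitative estimate $\tau_\infty(T)\leq(\|x+y\|^2+\|x-y\|^2)^{1/2}$.
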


\begin{proof}
It suffices to show that $(\ell_\infty^2,Y)$ fails the NTP is we assume that $Y$ is not Euclidean.
We use a classical characterization of Euclidean spaces \cite{Schoenberg52}: a norm $\|\cdot\|$ is Euclidean if and only if the inequality $\|x+y\|^2 + \|x-y\|^2 \geq 4$ holds for every unit vectors $x,y$. Since $Y$ is not Euclidean, it contains unit vectors $x$, $y$ such that $\|x+y\|^2 + \|x-y\|^2 < 4$.

Consider the operator $T : \ell_\infty^2 \to Y$ given by $T(a,b)=a x + b y$. By~\cite[Proposition 8.7]{tomczak1989banach}, we have $\|T\|_{N(\ell_\infty^2 \to Y)} = \|x\|+\|y\|= 2$. We now prove that
\begin{equation} \label{eq:square-2}
 \tau_\infty(T) \leq \left(\|x+y\|^2 + \|x-y\|^2 \right)^{1/2} < 2. \end{equation}
To prove \eqref{eq:square-2}, write $T=Q_1 \circ Q_2$, where $Q_1 : \ell_2^2 \to Y$ is given by $Q_1(a,b) = a(x+y)+b(x-y)$ and $Q_2 : \ell_{\infty}^2 \to \ell_2^2$ is given by $Q_2(a,b)=\frac 12 ( a+b,a-b)$. By Lemma \ref{lem:UpperBound}, we have
\[ \tau_\iy(T) \leq \|Q_1 \circ Q_1^*\|_{N(Y^* \to Y)}^{1/2} 
\|Q_2^* \circ Q_2\|_{N(\ell_\infty^2 \to \ell_1^2)}^{1/2} \]
Since $Q_2^* \circ Q_2 = \frac{1}{2}\ident_{\R^2}$, we have $\|Q_2^* \circ Q_2\|_{N(\ell_\infty^2 \to \ell_1^2)}=1$. The map $Q_1 \circ Q_1^*$ can be decomposed as $(x+y)(\cdot)(x+y) + (x-y)(\cdot)(x-y)$; this decomposition gives the bound $\|Q_1 \circ Q_1^*\|_{N(Y^* \to Y)} \leq \|x+y\|^2 + \|x-y\|^2$. We obtained the inequality $\tau_{\iy}(T) < 2 = \|T\|_{N(\ell_\iy^2 \to Y)}$, showing that $(\ell_\iy^2,Y)$ fails the NTP.
\end{proof}

Let $Y$ be a normed space. A subspace $Y' \subset Y$ is said to be \emph{$1$-complemented} if there is a projection $P:Y \to Y'$ with $\|P\| = 1$. The next lemma shows that the NTP is inherited by complemented subspaces.

\begin{lem} \label{lem:complementation}
Let $X$, $Y$ be finite-dimensional normed spaces and $X' \subset X$, $Y' \subset Y$ be $1$-complemented subspaces. If $(X,Y)$ has the NTP, then $(X',Y')$ has the NTP.
\end{lem}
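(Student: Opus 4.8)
The plan is to exploit the ideal property of $\tau_\infty$ (Lemma \ref{lem:ideal-property}) together with the corresponding ideal property of the nuclear norm, using the norm-one projections and the inclusion maps as the intertwining operators. Let $P_X : X \to X'$ and $P_Y : Y \to Y'$ be projections with $\|P_X\| = \|P_Y\| = 1$, and let $\iota_X : X' \hookrightarrow X$, $\iota_Y : Y' \hookrightarrow Y$ denote the inclusion maps, which are isometric embeddings and hence also have norm one. Given any operator $T : X' \to Y'$, I would form the "inflated" operator $\widetilde{T} := \iota_Y \circ T \circ P_X : X \to Y$ and the obvious identity $T = P_Y \circ \widetilde{T} \circ \iota_X$ (which holds because $P_Y \iota_Y = \ident_{Y'}$ and $P_X \iota_X = \ident_{X'}$).

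First I would apply Lemma \ref{lem:ideal-property} to the factorization $T = P_Y \circ \widetilde{T} \circ \iota_X$ to get $\tau_\infty(T) \leq \|P_Y\| \, \tau_\infty(\widetilde{T}) \, \|\iota_X\| = \tau_\infty(\widetilde{T})$. Next, since $(X,Y)$ has the NTP, $\tau_\infty(\widetilde{T}) = \|\widetilde{T}\|_{N(X \to Y)}$. Now I apply the ideal property of the nuclear norm to the factorization $\widetilde{T} = \iota_Y \circ T \circ P_X$, giving $\|\widetilde{T}\|_{N(X \to Y)} \leq \|\iota_Y\| \, \|T\|_{N(X' \to Y')} \, \|P_X\| = \|T\|_{N(X' \to Y')}$. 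Chaining these three steps yields $\tau_\infty(T) \leq \|T\|_{N(X' \to Y')}$. The reverse inequality $\tau_\infty(T) \geq \|T\|_{N(X' \to Y')}$ is not needed as a hypothesis-dependent fact: it is false in general, but the inequality we actually need is $\tau_\infty(T) \geq \|T\|_{N}$... wait, that is also not automatic. Rather, I recall that $\tau_\infty(T) \leq \|T\|_{N(X' \to Y')}$ always holds by \eqref{equ:tauInfUpperLower}, so the content is really the reverse: combining, we in fact obtain equality. Let me restate: from \eqref{equ:tauInfUpperLower} applied within $L(X',Y')$ we always have $\tau_\infty(T) \leq \|T\|_{N(X' \to Y')}$; the chain above gives the same inequality, so to prove NTP for $(X',Y')$ I instead need $\tau_\infty(T) \geq \|T\|_{N(X' \to Y')}$. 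But the chain above runs in the direction $\tau_\infty(T) \leq \tau_\infty(\widetilde T) = \|\widetilde T\|_N \leq \|T\|_N$, which is the wrong direction to prove equality.

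So the genuine argument must go the other way: I would instead show $\|T\|_{N(X' \to Y')} \leq \tau_\infty(T)$. Apply the ideal property of the nuclear norm to $T = P_Y \circ \widetilde T \circ \iota_X$: $\|T\|_{N(X'\to Y')} \leq \|\widetilde T\|_{N(X \to Y)}$; then use NTP of $(X,Y)$ to write $\|\widetilde T\|_{N(X\to Y)} = \tau_\infty(\widetilde T)$; then apply the ideal property of $\tau_\infty$ (Lemma \ref{lem:ideal-property}) to $\widetilde T = \iota_Y \circ T \circ P_X$ to get $\tau_\infty(\widetilde T) \leq \|\iota_Y\| \, \tau_\infty(T) \, \|P_X\| = \tau_\infty(T)$. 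Chaining: $\|T\|_{N(X'\to Y')} \leq \tau_\infty(T)$, and combined with the universal bound $\tau_\infty(T) \leq \|T\|_{N(X'\to Y')}$ this gives equality, i.e. $(X',Y')$ has the NTP. The only subtlety — and the main thing to get right — is the bookkeeping of which maps have norm one (the $1$-complementation gives $\|P_X\|=\|P_Y\|=1$, and isometric embeddings give $\|\iota_X\|=\|\iota_Y\|=1$) and the identities $P_X \iota_X = \ident_{X'}$, $P_Y \iota_Y = \ident_{Y'}$; once these are in place the proof is a three-line application of the two ideal properties. No serious obstacle is expected.
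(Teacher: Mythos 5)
Your final argument is correct and is essentially identical to the paper's proof: both form the inflated operator $\iota_Y \circ T \circ P_X$, use the ideal property of the nuclear norm plus the NTP of $(X,Y)$ to get $\|T\|_{N(X'\to Y')} \leq \tau_\infty(\iota_Y \circ T \circ P_X)$, then the ideal property of $\tau_\infty$ to bound this by $\tau_\infty(T)$, and conclude via the universal inequality $\tau_\infty \leq \|\cdot\|_N$. The only flaw is presentational: the first half of your writeup chases the inequality in the wrong direction before self-correcting, and should be deleted.
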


\begin{proof}
Let $\iota_1 : X' \to X$, $\iota_2 : Y' \to Y$ be the inclusion maps, and $P_1 : X \to X'$, $P_2~:~Y \to Y'$ be projections of norm $1$. Let $T \in L(X',Y')$ a linear map and set $S = \iota_2 \circ T \circ P_1 \in L(X,Y)$. The ideal property of the nuclear norm and the NTP for $(X,Y)$ imply that
\[ \|T\|_{N (X' \to Y')} = \|P_2 \circ S \circ \iota_1\|_{N (X' \to Y')} \leq  \|S \|_{N (X \to Y)}
= \tau_\infty(S). \]
Similarly, the ideal property of the \name (see Lemma \ref{lem:ideal-property}) implies that $\tau_\iy(S) \leq \tau_\iy(T)$. We conclude that $\tau_\iy(T) = \|T\|_{N(X' \to Y')}$ and the result follows.
\end{proof}

Let $X$ be a finite-dimensional normed space containing a subspace isometric to $\ell_\iy^2$ (this includes in particular the spaces $\ell_\iy^n$ and, in the real case, the spaces $\ell_1^n$). Such a subspace is necessarily $1$-complemented (this follows from the Hahn--Banach theorem). It follows from Proposition \ref{prop:conjecture-square} and Lemma \ref{lem:complementation} that $(X,Y)$ fails the NTP whenever $Y$ is not Euclidean.

Our methods also imply that the $\ell_p$-spaces over the reals, denoted by $\ell_p^n(\R)$, only have the NTP in the Euclidean case.

\begin{prop}
Let $m,n \geq 2$ and $p,q \in [1,\iy]$. If $(\ell_p^m(\R),\ell_q^n(\R))$ has the NTP, then $p=2$ or $q=2$.
\end{prop}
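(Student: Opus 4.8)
The plan is to prove the contrapositive: assuming $p\neq 2$ and $q\neq 2$, I will show that $(\ell_p^m(\R),\ell_q^n(\R))$ fails the NTP. Two of the cases are already covered by the discussion following Proposition \ref{prop:conjecture-square}. Indeed, if $p\in\{1,\infty\}$ then $\ell_p^m(\R)$ contains an isometric (hence $1$-complemented) copy of $\ell_\infty^2$ (trivially for $p=\infty$, and via $\ell_1^2\cong\ell_\infty^2$ for $p=1$), so $(\ell_p^m(\R),\ell_q^n(\R))$ fails the NTP because $\ell_q^n(\R)$ is not Euclidean. If $q\in\{1,\infty\}$, I pass to duals: by Lemma \ref{lem:tau-duality} the pair $(\ell_p^m,\ell_q^n)$ has the NTP if and only if $(\ell_{q'}^n,\ell_{p'}^m)$ does (with $\tfrac1r+\tfrac1{r'}=1$), and the latter falls into the previous case since $q'\in\{1,\infty\}$ and $p'\neq 2$. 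It remains to treat $p,q\in(1,\infty)\setminus\{2\}$.

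For this range I would first reduce the dimension. Since $\ell_p^2$ is $1$-complemented in $\ell_p^m$ and $\ell_q^2$ is $1$-complemented in $\ell_q^n$, Lemma \ref{lem:complementation} shows it is enough to prove that $(\ell_p^2,\ell_q^2)$ fails the NTP. Here I would invoke Proposition \ref{prop:john-avoids-loewner}, for which I need the John and Loewner ellipsoids of $\ell_r^2$. Each of these ellipsoids is invariant under the symmetries $(x,y)\mapsto(\pm x,\pm y)$ and $(x,y)\mapsto(y,x)$ of $\ell_r^2$, hence is a Euclidean disk centred at the origin; optimizing $\|x\|_2$ over $\{x:\|x\|_r=1\}$ then shows that, for $r>2$, the John ellipsoid of $\ell_r^2$ is $B_2^2$ with contact points $\pm e_1,\pm e_2$ while the Loewner ellipsoid is $2^{1/2-1/r}B_2^2$ with contact points $2^{-1/r}(\pm 1,\pm 1)$; for $r<2$ the two descriptions are interchanged. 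In particular the John contact points of $\ell_p^2$ lie on a union $L$ of two lines through the origin, each at an angle that is a multiple of $\pi/4$ (the coordinate axes if $p>2$, the diagonals if $p<2$), and likewise the Loewner contact points of $\ell_q^2$ lie on such a union $L'$.

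Let $O$ be the rotation by $\pi/6$ and let $\lambda>0$ be the ratio of the Euclidean radius of the Loewner ellipsoid of $\ell_q^2$ to that of the John ellipsoid of $\ell_p^2$, so that $T:=\lambda O:\ell_p^2\to\ell_q^2$ maps the John ellipsoid of $\ell_p^2$ onto the Loewner ellipsoid of $\ell_q^2$. Since $O$ sends every line whose angle is a multiple of $\pi/4$ to a line whose angle is not a multiple of $\pi/4$, we have $O(L)\cap L'=\{0\}$. Now if $v$ is a John contact point of $\ell_p^2$, then $\|Tv\|_2$ equals the Euclidean radius of the Loewner ellipsoid of $\ell_q^2$, which is the common Euclidean norm of the Loewner contact points of $\ell_q^2$; but the direction of $Tv$ lies in $O(L)$, which is disjoint from $L'$, so $Tv$ is not a Loewner contact point of $\ell_q^2$. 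Proposition \ref{prop:john-avoids-loewner} then gives $\tau_\iy(T)<\|T\|_{N(\ell_p^2\to\ell_q^2)}$, so $(\ell_p^2,\ell_q^2)$ fails the NTP, and with it $(\ell_p^m(\R),\ell_q^n(\R))$.

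The only step that is not pure bookkeeping is the explicit determination of the John and Loewner ellipsoids of $\ell_r^2$ and of their contact points — a routine two-variable extremal computation — so I expect no real obstacle; everything else is the reduction to dimension $2$, the duality argument for $q\in\{1,\infty\}$, and an angle count, all immediate from Lemmas \ref{lem:complementation} and \ref{lem:tau-duality} and Propositions \ref{prop:conjecture-square} and \ref{prop:john-avoids-loewner}.
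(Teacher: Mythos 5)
Your proof is correct and follows essentially the same route as the paper: reduce to $(\ell_p^2,\ell_q^2)$ by $1$-complementation, compute the John/Loewner contact points of $\ell_r^2$ (axes for $r>2$, diagonals for $r<2$), and apply Proposition \ref{prop:john-avoids-loewner} to a suitably scaled rotation by an angle that is not a multiple of $\pi/4$. Your separate treatment of $p\in\{1,\infty\}$ and $q\in\{1,\infty\}$ is harmless but unnecessary, since the contact-point description and the rotation argument remain valid at the endpoints.
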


\begin{proof}
By Lemma \ref{lem:complementation}, we may assume that $m=n=2$ ($\ell_p^n$ contains a $1$-com\-ple\-ment\-ed subspace isometric to $\ell_p^2$). If $p \neq 2$, the John contact points of $\ell_p^2$ (which coincide with the Loewner contact points of $\ell_{p*}^2$ if $1/p+1/p^*=1$), are the following $4$ points (up to normalization)
\[ \begin{cases} (\pm 1,0) \textnormal{ or } (0,\pm 1) & \textnormal{ if } p > 2  \\
(\pm 1,\pm 1) & \textnormal{ if } p < 2.
\end{cases} \]
If both $p \neq 2$ and $q \neq 2$, the fact that the pair $(\ell_p^2,\ell_q^2)$ does not have the NPT follows from Proposition \ref{prop:john-avoids-loewner}, by choosing $T : \R^2 \to \R^2$ to be a rotation of angle not multiple of $\pi/4$. The result follows.
\end{proof}

Some natural pairs of spaces are not covered by the criterion of Proposition \ref{prop:john-avoids-loewner}. Examples of such pairs, for which we do not know whether they have the NTP, are 
\begin{enumerate}
\item the pair $(X,X^*)$, where $X$ is the space $\R^2$ equipped with the norm \[ \max(\|\cdot\|_2,(1+\e)\|\cdot\|_{\iy}) \] for some small $\e>0$, 
\item the pair $(X,X^*)$, where $X$ is the space $\ell_1^2 \otimes_\pi \ell_2^2$,
\item in the complex case, the pair $(\ell_1^2,\ell_\infty^2)$ -- the previous example is obtained by forgetting the complex structure.
\end{enumerate}
Note that all these examples have enough symmetries. 

\section{Further questions about \names and some answers}\label{sec:FurtherQuestions}

\subsection{Is the \name continuous, or even a norm?}\label{sec:Norm}

One may wonder whether $\tau_k$ is a norm on the space $L(X,Y)$. Here is an example showing that this is not the case in full generality.

\begin{example} 
Over the real field, consider the operators $S$, $T : \ell_1^2 \to \ell_1^2$ given by the following matrices
\[ S = \begin{pmatrix} 1 & 1/3 \\ 1/3 & 1 \end{pmatrix}, \ \ T = \begin{pmatrix} 1/2 & 0 \\ 0 & 1 \end{pmatrix}. \]
One computes that $\tau_2(S) = \sqrt{2}$, $\tau_2(S) = \frac{3}{2\sqrt{2}}$ and $\tau_2(S+T) =  \sqrt{\frac{155}{24}} $, so that
\[ 2.54\ldots \approx \tau_2 (S+T)  >  \tau_2(S) + \tau_2(T) \approx 2.47\ldots \]
To obtain these values, we used the fact that the extreme points of the unit ball for the norm $\eps_2(\ell_1^2)$ are given by $\pm e_i \otimes e_j$ and $\frac 12 (\eta_1 e_1 \otimes e_1 + \eta_2 e_1 \otimes e_2 + \eta_3 e_2 \otimes e_1 + \eta_4 e_2 \otimes e_2)$, where $(\eta_i)$ is a permutation of either $(1,1,1,-1)$ or $(1,-1,-1,-1)$.
\end{example}

Despite this counterexample, it turns out that $\tau_2$ is a norm in the special case of $L(\ell_2^m,\ell_2^n)$. We have the following proposition, observed without proof in~\cite{KJohn1}

\begin{prop}
Let $m,n$ be integers. For $T \in L(\ell_2^m,\ell_2^n)$, $\tau_2(T)$ coincides with the Hilbert--Schmidt norm given by $\|T\|_{\mathrm{HS}} = \left( \Tr \lbr TT^*\rbr \right)^{1/2}$.
\end{prop}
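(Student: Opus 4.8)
The plan is to reduce the computation of $\tau_2(T)^2=\|T^{\otimes2}\|_{\e_2(\ell_2^m)\to\pi_2(\ell_2^n)}$, this equality being the definition~\eqref{eq:def-tau}, to an explicit matrix optimization. First I would identify $\ell_2^k\otimes\ell_2^k$ with the space $M_k$ of $k\times k$ matrices via $a\otimes b\mapsto ab^{\top}$. Under this identification the injective norm $\|\cdot\|_{\e_2(\ell_2^k)}$ becomes the operator norm $\|\cdot\|_{S_\infty}$ (its defining supremum is precisely $\sup\{|u^{\top}Zv|:\|u\|_2,\|v\|_2\le1\}$), and, because the projective norm is dual to the injective one, $\|\cdot\|_{\pi_2(\ell_2^k)}$ becomes the trace norm $\|\cdot\|_{S_1}$, i.e.\ the dual of $S_\infty$ for the trace pairing. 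Moreover $T^{\otimes2}$ acts by $Z\mapsto TZT^{\top}$. Hence
\[ \tau_2(T)^2=\sup_{Z\in M_m,\ \|Z\|_{S_\infty}\le1}\|TZT^{\top}\|_{S_1}, \]
and the claim amounts to showing that this supremum equals $\|T\|_{\mathrm{HS}}^2$.

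For the inequality ``$\le$'' I would invoke Hölder's inequality for Schatten norms, $\|ABC\|_{S_1}\le\|A\|_{S_2}\|B\|_{S_\infty}\|C\|_{S_2}$: with $A=T$, $B=Z$, $C=T^{\top}$ this gives $\|TZT^{\top}\|_{S_1}\le\|T\|_{S_2}\|T^{\top}\|_{S_2}=\|T\|_{\mathrm{HS}}^2$ whenever $\|Z\|_{S_\infty}\le1$, since $\|T\|_{S_2}=\|T^{\top}\|_{S_2}=\|T\|_{\mathrm{HS}}$.

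For the inequality ``$\ge$'' I would first note that $\tau_2$ is unchanged under pre- and post-composition of $T$ by unitaries: the ideal property (Lemma~\ref{lem:ideal-property}), applied with unitaries and their inverses, which are again unitaries of norm $1$, yields $\tau_2(UTV)=\tau_2(T)$; the same invariance is clear for $\|\cdot\|_{\mathrm{HS}}$. Using a singular value decomposition I may therefore assume $T$ is the rectangular diagonal operator $\ell_2^m\to\ell_2^n$ whose diagonal entries are the singular values $\sigma_1\ge\dots\ge\sigma_r\ge0$ of $T$. Choosing $Z=I_m$ (the $m\times m$ identity, so $\|Z\|_{S_\infty}=1$) then gives $TZT^{\top}=TT^{*}=\mathrm{diag}(\sigma_1^2,\dots,\sigma_r^2,0,\dots)$, a positive semidefinite matrix whose trace norm is $\sum_i\sigma_i^2=\|T\|_{\mathrm{HS}}^2$. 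Combining the two bounds yields $\tau_2(T)=\|T\|_{\mathrm{HS}}$.

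I do not expect any genuinely difficult step here. The point that needs care is setting up the two identifications correctly, and in particular observing that over $\C$ the map $T^{\otimes2}$ acts by $Z\mapsto TZT^{\top}$ with the \emph{transpose}, not the adjoint. Because of this the obvious test tensor $\sum_i e_i\otimes e_i$ produces $TT^{\top}$, whose trace norm need not equal $\|T\|_{\mathrm{HS}}^2$; the reduction to a diagonal $T$ via the unitary invariance of $\tau_2$ is exactly what removes this difficulty, since after the reduction $T$ is real and $T^{\top}=T^{*}$. The remaining verifications — that $\|\sum_i e_i\otimes e_i\|_{\e_2(\ell_2^m)}=1$, and the Schatten-norm (in)equalities used above — are routine.
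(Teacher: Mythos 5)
Your proposal is correct and follows essentially the same route as the paper: identify $\ell_2^k\otimes\ell_2^k$ with matrices (injective norm $=$ operator norm, projective norm $=$ trace norm), reduce to a diagonal $T$ via the singular value decomposition and the ideal property, and test with the identity matrix; the only cosmetic difference is that you bound $\|TZT^{\top}\|_{S_1}$ directly by Schatten--H\"older, whereas the paper dualizes the trace norm against a second operator-norm ball and applies Cauchy--Schwarz, which amounts to the same estimate. Your remark about the transpose versus the adjoint over $\C$ is a worthwhile point of care that the paper handles implicitly through the same diagonal reduction.
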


\begin{proof}
Consider $T \in L(\ell_2^m,\ell_2^n)$. If $U \in L(\ell_2^m,\ell_2^m)$ and $V \in L(\ell_2^n,\ell_2^n)$ are isometries, we have $\tau_2(UTV)=\tau_2(T)$ by the ideal property of $\tau_2$ (Lemma \ref{lem:ideal-property}). Using the singular value decompositions, it suffices to prove the result when $T$ is diagonal, i.e., of the form $T = \sum_{i=1}^{\min(m,n)} \lambda_i e_ie_i^*$ for $\lambda_i \geq 0$. We have
then
\begin{align*} \tau_2(T)^2 & = \sup \left\{ \left| \langle (T\otimes T)(x),y \rangle \right| \, :\, \|x\|_{\ell_2^m \otimes_\e \ell_2^m} \leq 1,\ \|y\|_{\ell_2^n \otimes_\e \ell_2^n} \leq 1 \right\} \\
 & = \sup \left\{ \left| \Tr \lbr TXTY^t\rbr \right| \, :\, \|X\|_{\ell_2^m \to \ell_2^m} \leq 1,\ \|Y\|_{\ell_2^n \to \ell_2^n} \leq 1 \right\} \end{align*}
after identifying tensors with operators. For $X$, $Y$ as above, we have 
\[ | \Tr \lbr TXTY^t\rbr | \leq \|T\|_{\mathrm{HS}}\|XTY^t\|_{\mathrm{HS}} \leq \|T\|_{\mathrm{HS}}^2 \] with equality when $X=\ident_{\ell_2^m}$, $Y = \ident_{\ell_2^n}$, proving the result.
\end{proof}

We do not know whether $\tau_k$ is a norm on $L(\ell_2^m,\ell_2^n)$ for any other $k\in\N$. It would also be interesting to decide whether the tensor radius $\tau_{\infty}$ is a norm on $L(X,Y)$ (that question also appears in \cite{KJohn4}). By Theorem \ref{thm:main-nuclear-norm} the answer to this question is positive when $X$ or $Y$ is Euclidean. However, in general we do not know the answer. We will now show a weaker version of the triangle inequality, which implies that the tensor radius is a continuous function.

\begin{prop} \label{prop:tau-continuous}
For finite-dimensional normed spaces $X$, $Y$ and $S, T \in L(X,Y)$, we have
\[ \tau_\infty(S+T) \leq \tau_\infty(S) + \|T\|_N .\]
\end{prop}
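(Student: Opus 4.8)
The plan is to fix $k$, expand $(S+T)^{\otimes k}$ into its $2^k$ mixed terms, estimate each term's injective-to-projective norm, and recombine via the binomial theorem. Write $(S+T)^{\otimes k} = \sum_{A \subseteq \{1,\dots,k\}} R_A$, where $R_A : X^{\otimes k} \to Y^{\otimes k}$ carries the operator $T$ in the tensor slots indexed by $A$ and $S$ in the remaining slots. Since $\e_k(X)$ and $\pi_k(Y)$ are invariant under permutations of the tensor factors, for $|A|=j$ I may assume $A = \{1,\dots,j\}$, so $R_A = T^{\otimes j} \otimes S^{\otimes(k-j)}$. The asymmetric shape of the desired inequality (with $\tau_\infty(S)$ but $\|T\|_N$) is exactly what this decomposition produces: the $S$-blocks will be handled by the tensor radius, the $T$-blocks by a nuclear decomposition.

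The core estimate is $\|R_A\|_{\e_k(X) \to \pi_k(Y)} \leq \tau_\infty(S)^{k-j} \|T\|_N^j$. To prove it, fix $z \in X^{\otimes k}$ with $\|z\|_{\e_k(X)} \leq 1$ and a nuclear decomposition $T(\cdot) = \sum_l x_l^*(\cdot) y_l$, and substitute it into the first $j$ factors to obtain $R_A(z) = \sum_{l_1,\dots,l_j} (y_{l_1} \otimes \cdots \otimes y_{l_j}) \otimes S^{\otimes(k-j)}(w_{l_1\cdots l_j})$, where $w_{l_1\cdots l_j} := (x_{l_1}^* \otimes \cdots \otimes x_{l_j}^* \otimes \ident_X^{\otimes(k-j)})(z) \in X^{\otimes(k-j)}$ is the partial evaluation of $z$. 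Two bookkeeping facts, both immediate from the definitions of the injective and projective norms, then close the term: partial evaluation against functionals does not increase the injective norm, so $\|w_{l_1\cdots l_j}\|_{\e_{k-j}(X)} \leq \|x_{l_1}^*\|_{X^*} \cdots \|x_{l_j}^*\|_{X^*}$; and tensoring a projective decomposition in $Y^{\otimes(k-j)}$ on the left by a fixed elementary tensor in $Y^{\otimes j}$ yields a projective decomposition in $Y^{\otimes k}$, so by the triangle inequality $\|R_A(z)\|_{\pi_k(Y)} \leq \sum_{l_1,\dots,l_j} \|y_{l_1}\|_Y \cdots \|y_{l_j}\|_Y \, \|S^{\otimes(k-j)}(w_{l_1\cdots l_j})\|_{\pi_{k-j}(Y)}$. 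Now bound $\|S^{\otimes(k-j)}(w)\|_{\pi_{k-j}(Y)} \leq \tau_{k-j}(S)^{k-j} \|w\|_{\e_{k-j}(X)} \leq \tau_\infty(S)^{k-j} \|w\|_{\e_{k-j}(X)}$, the last step being $\tau_{k-j}(S) \leq \sup_{m\geq 1}\tau_m(S) = \tau_\infty(S)$ from Lemma \ref{lem:subadditivity}. Combining the three inequalities and then optimizing over nuclear decompositions of $T$ gives $\|R_A(z)\|_{\pi_k(Y)} \leq \tau_\infty(S)^{k-j}\|T\|_N^j$, as claimed.

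Finally, sum over all $A$ and count the $\binom{k}{j}$ subsets of each size $j$: for $\|z\|_{\e_k(X)}\leq 1$,
\[ \|(S+T)^{\otimes k}(z)\|_{\pi_k(Y)} \leq \sum_{j=0}^k \binom{k}{j} \tau_\infty(S)^{k-j}\|T\|_N^j = \big(\tau_\infty(S) + \|T\|_N\big)^k . \]
Hence $\tau_k(S+T) \leq \tau_\infty(S) + \|T\|_N$ for every $k$, and letting $k \to \infty$ finishes the proof. I expect the only mildly delicate point to be the uniform passage from $\tau_{k-j}(S)$ to the larger $\tau_\infty(S)$ for each $j$ — without it the binomial sum would not collapse to a clean $k$-th power — together with phrasing the two partial-evaluation and partial-tensoring facts carefully; everything else is routine manipulation of the injective and projective tensor norms, essentially refining the computation already used to prove $\tau_k(T)\leq\|T\|_N$ in \eqref{eq:tau-basic-bound}.
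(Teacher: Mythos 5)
Your proof is correct and follows essentially the same route as the paper's: both expand $(S+T)^{\otimes k}$ binomially, handle the $T$-slots via a nuclear decomposition (your two ``bookkeeping facts'' are exactly the content of the paper's Lemma~\ref{lem:EasyLemNormsRank1}), bound the $S$-block by $\tau_\infty(S)^{k-j}$ using $\tau_m(S)\leq\tau_\infty(S)$, and collapse the binomial sum before letting $k\to\infty$ and optimizing over nuclear decompositions.
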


Using Corollary \ref{cor:tauInftyLowerBound}, the previous proposition implies:

\begin{cor}[Weak triangle inequality]
Let $X$, $Y$ be finite-dimensional normed spaces and $S, T \in L(X,Y)$. Then
\[ \tau_\infty(S+T) \leq \tau_\infty(S) + \min\lb d_X,d_Y\rb\tau_\infty(T) .\]
\end{cor}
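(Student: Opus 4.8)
The Weak triangle inequality is immediate from what precedes it: Proposition \ref{prop:tau-continuous} gives $\tau_\infty(S+T) \leq \tau_\infty(S) + \|T\|_N$, while Corollary \ref{cor:tauInftyLowerBound} applied to $T$ gives $\|T\|_N \leq \min(d_X,d_Y)\,\tau_\infty(T)$; chaining the two bounds yields the claim. So the real content lies in Proposition \ref{prop:tau-continuous}, whose proof I would carry out as follows.

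The plan is to expand $(S+T)^{\otimes k}$ into its $2^k$ ``mixed'' summands and estimate each one by absorbing the $T$-legs into the nuclear norm of $T$ and the $S$-legs into $\tau_\infty(S)$. Fix $k \geq 1$ and $\delta > 0$, and choose a nuclear decomposition $T(\cdot) = \sum_i x_i^*(\cdot)\,y_i$ with $\sum_i \|x_i^*\|_{X^*}\|y_i\|_Y \leq \|T\|_N + \delta$. For $A \subseteq \{1,\dots,k\}$, let $M_A \in L(X^{\otimes k}, Y^{\otimes k})$ act as $T$ on the legs indexed by $A$ and as $S$ on the remaining legs, so that distributivity of $\otimes$ over $+$ gives $(S+T)^{\otimes k} = \sum_{A} M_A$. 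The key estimate I would establish is
\[ \|M_A\|_{\e_k(X) \to \pi_k(Y)} \leq \tau_\infty(S)^{k-|A|}\,(\|T\|_N + \delta)^{|A|}. \]

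To prove it, set $j = |A|$; since permutations of the tensor legs are isometries for both $\e_k$ and $\pi_k$, we may assume $M_A = S^{\otimes(k-j)} \otimes T^{\otimes j}$. Given $z \in X^{\otimes k}$ with $\|z\|_{\e_k(X)} \leq 1$, expand $T^{\otimes j}$ via the chosen decomposition; contracting the last $j$ legs of $z$ against $x_{i_1}^* \otimes \cdots \otimes x_{i_j}^*$ produces a tensor $z_{i_1\dots i_j} \in X^{\otimes(k-j)}$ with $\|z_{i_1\dots i_j}\|_{\e_{k-j}(X)} \leq \|x_{i_1}^*\|\cdots\|x_{i_j}^*\|$, directly from the definition of the injective norm. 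Then $M_A(z) = \sum_{i_1,\dots,i_j} \big(S^{\otimes(k-j)} z_{i_1\dots i_j}\big) \otimes y_{i_1} \otimes \cdots \otimes y_{i_j}$, and applying the definition of the projective norm together with $\|S^{\otimes(k-j)} z_{i_1\dots i_j}\|_{\pi_{k-j}(Y)} \leq \tau_{k-j}(S)^{k-j}\|z_{i_1\dots i_j}\|_{\e_{k-j}(X)} \leq \tau_\infty(S)^{k-j}\|z_{i_1\dots i_j}\|_{\e_{k-j}(X)}$ (using $\tau_{k-j}(S) \leq \tau_\infty(S)$ from Lemma \ref{lem:subadditivity}) bounds $\|M_A(z)\|_{\pi_k(Y)}$ by $\tau_\infty(S)^{k-j}\big(\sum_i \|x_i^*\|\|y_i\|\big)^j \leq \tau_\infty(S)^{k-j}(\|T\|_N + \delta)^j$; the case $j = k$ is just the bound $\|T^{\otimes k}\|_{\e_k(X)\to\pi_k(Y)} \leq \|T\|_N^k$ of \eqref{eq:tau-basic-bound}.

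Summing the key estimate over all $A$ and grouping by $|A| = j$ then gives, by the binomial theorem,
\[ \tau_k(S+T)^k = \|(S+T)^{\otimes k}\|_{\e_k(X) \to \pi_k(Y)} \leq \sum_{j=0}^{k} \binom{k}{j}\, \tau_\infty(S)^{k-j}(\|T\|_N + \delta)^j = \big(\tau_\infty(S) + \|T\|_N + \delta\big)^k. \]
Taking $k$-th roots, letting $k \to \infty$ via Lemma \ref{lem:subadditivity}, and finally $\delta \to 0$ yields Proposition \ref{prop:tau-continuous}, and the Weak triangle inequality follows as above. I do not expect a genuine obstacle: the whole argument is bookkeeping for the binomial expansion of $(S+T)^{\otimes k}$, the only mildly delicate points being that contracting tensor legs against functionals is a map $\e_k(X) \to \e_{k-j}(X)$ of norm $\prod_m \|x_{i_m}^*\|$ and that $\|\cdot\|_{\pi_k(Y)}$ of a sum of elementary tensors is controlled by the corresponding sum of products of factor norms — both immediate from the definitions of the injective and projective norms.
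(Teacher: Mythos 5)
Your proof is correct and follows essentially the same route as the paper: the corollary itself is obtained by chaining Proposition \ref{prop:tau-continuous} with Corollary \ref{cor:tauInftyLowerBound}, and your binomial expansion of $(S+T)^{\otimes k}$ with the mixed terms $S^{\otimes(k-j)}\otimes T^{\otimes j}$ bounded via a nuclear decomposition of $T$ is exactly the paper's argument. Your direct contraction estimate (slotting the $x_{i_m}^*$ into the injective norm and the $y_{i_m}$ into the projective norm) is just an unpacked version of the paper's Lemma \ref{lem:EasyLemNormsRank1}, so there is no substantive difference.
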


We first prove a lemma:

\begin{lem}\label{lem:EasyLemNormsRank1}
For $T \in L(X_1,Y_1)$, $y\in Y_2$, and $x^*\in X_2^*$, we have 
\[
\|T\otimes yx^*\|_{X_1\otimes_\e X_2 \to Y_1\otimes_\pi Y_2} = \|T\|_{X_1\ra Y_1}\|x^*\|_{X_2^*}\| y\|_{Y_2}.
\] 
\end{lem}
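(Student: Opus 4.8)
The plan is to establish the two inequalities ``$\leq$'' and ``$\geq$'' separately; both are elementary, the essential point being that the second tensor factor $yx^*$ -- the rank-one operator $X_2\ra Y_2$ sending $\xi\mapsto x^*(\xi)y$, of operator norm $\|y\|_{Y_2}\|x^*\|_{X_2^*}$ -- has rank one, so that the mixed injective-to-projective behaviour does not cost any constant.

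\textbf{Upper bound.} I would first observe that $\ident_{X_1}\otimes yx^*$ sends every $z\in X_1\otimes X_2$ to a pure tensor. Regarding $x^*$ as a functional $X_2\ra\K$, so that $\ident_{X_1}\otimes x^*$ maps $X_1\otimes X_2$ onto $X_1\otimes\K\cong X_1$, set $w:=(\ident_{X_1}\otimes x^*)(z)\in X_1$; checking on elementary tensors gives $(\ident_{X_1}\otimes yx^*)(z)=w\otimes y$, hence $(T\otimes yx^*)(z)=(Tw)\otimes y$. Since $\pi$ is a crossnorm, $\|(Tw)\otimes y\|_{\pi}=\|Tw\|_{Y_1}\|y\|_{Y_2}\leq\|T\|\,\|w\|_{X_1}\,\|y\|_{Y_2}$. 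Finally I would bound $\|w\|_{X_1}$ directly from the definition of the injective norm: for $\lambda\in B_{X_1^*}$ one has $\lambda(w)=(\lambda\otimes x^*)(z)$ and $|(\lambda\otimes x^*)(z)|\leq\|\lambda\|_{X_1^*}\|x^*\|_{X_2^*}\|z\|_{\e}\leq\|x^*\|_{X_2^*}\|z\|_{\e}$, so $\|w\|_{X_1}\leq\|x^*\|_{X_2^*}\|z\|_{\e}$. Combining these yields $\|(T\otimes yx^*)(z)\|_{\pi}\leq\|T\|\,\|x^*\|_{X_2^*}\,\|y\|_{Y_2}\,\|z\|_{\e}$, i.e.\ the asserted upper bound.

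\textbf{Lower bound.} Here I would simply test the operator on a well-chosen pure tensor. Using compactness of the unit balls in finite dimensions, choose $a\in X_1$ with $\|a\|_{X_1}=1$ and $\|Ta\|_{Y_1}=\|T\|$, and $b\in X_2$ with $\|b\|_{X_2}=1$ and $x^*(b)=\|x^*\|_{X_2^*}$. For $z=a\otimes b$ we have $\|z\|_{\e}=\|a\|_{X_1}\|b\|_{X_2}=1$ because $\e$ is a crossnorm, while $(T\otimes yx^*)(z)=x^*(b)\,(Ta)\otimes y$, so $\|(T\otimes yx^*)(z)\|_{\pi}=|x^*(b)|\,\|Ta\|_{Y_1}\,\|y\|_{Y_2}=\|x^*\|_{X_2^*}\,\|T\|\,\|y\|_{Y_2}$, again by the crossnorm property of $\pi$. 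This gives ``$\geq$'' and, together with the upper bound, the claimed equality.

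\textbf{Main point.} There is no genuine obstacle; the statement is essentially bookkeeping. The one step that truly uses the hypothesis is the collapse identity $(\ident_{X_1}\otimes yx^*)(z)=w\otimes y$: for a second factor $S:X_2\ra Y_2$ of higher rank one cannot factor $T\otimes S$ through a space carrying a single tensor norm without paying a constant (this mixed $\e$-to-$\pi$ discrepancy is precisely what $\tau_1$ measures), so it is exactly the rank-one structure that makes the norm multiply on the nose. Everything else is the crossnorm identity $\|u\otimes v\|=\|u\|\,\|v\|$ for $\e$ and $\pi$, together with the defining supremum for $\|\cdot\|_{\e}$.
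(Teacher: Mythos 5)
Your proof is correct and follows essentially the same route as the paper's: collapse $(T\otimes yx^*)(z)$ to the pure tensor $(Tw)\otimes y$ with $w=(\ident_{X_1}\otimes x^*)(z)$, bound $\|w\|_{X_1}\leq\|x^*\|_{X_2^*}\|z\|_{\e}$ via the injective norm, and obtain the reverse inequality by testing on a norming elementary tensor. You merely spell out the two estimates that the paper leaves implicit.
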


\begin{proof}
For any $z\in X_1\otimes X_2$ we have 
\[
\|(T\otimes yx^*)(z)\|_{Y_1\otimes_\pi Y_2} = \|T(z_x)\|_{Y_1}\|y\|_{Y_2},
\]
for $z_x= (\ident_{X_1}\otimes x^*)(z)\in X_1$, by the metric mapping property of $\pi$. Since $\|z_x\|_{X_1}\leq \|x^*\|_{X_2^*}\|z\|_{X_1\otimes_\e X_2}$, we conclude that 
\[
\|(T\otimes yx^*)(z)\|_{Y_1\otimes_\pi Y_2} \leq \|T\|_{X_1\ra Y_1} \|x^*\|_{X_2^*}\|y\|_{Y_2}\|z\|_{X_1\otimes_\e X_2},
\]
showing one direction of the identity in the lemma. The other direction follows by inserting $z=z_1\otimes z_2$ for a suitable choice of $z_1$ and $z_2$. 
\end{proof}

\begin{proof}[Proof of Proposition \ref{prop:tau-continuous}]
Let $T=\sum_i y_ix^*_i$ denote a nuclear decomposition. For $i,j\in \N$, Lemma \ref{lem:EasyLemNormsRank1} implies 
\begin{align*}
\|& S^{\otimes (k-j)} \otimes T^{\otimes j} \|_{ \e_{k-j}(X) \otimes_\e \e_j(X) \to \pi_{k-j}(Y) \otimes_\pi \pi_j(Y)} \\
&\leq \sum_{i_1,\ldots ,i_j} \| S^{\otimes (k-j)} \otimes (y_{i_1}x^*_{i_1}) \otimes \cdots \otimes (y_{i_j}x^*_{i_j})\|_{ \e_{k-j}(X) \otimes_\e \e_j(X) \to \pi_{k-j}(Y) \otimes_\pi \pi_j(Y)} \\
&\leq \| S^{\otimes (k-j)}\|_{\e_{k-j}(X) \to \pi_{k-j}(Y)} \lb\sum_{i}\|y_i\|_Y\|x^*_i\|_{X^*}\rb^j .
\end{align*}
Using the associativity and commutativity of the injective and projective tensor products, we may write
\begin{align*}
\tau_k(S+T)^k &=  \| (S+T)^{\otimes k} \|_{\e_k(X) \to \pi_k(Y)}   \\
& \leq \sum_{j=0}^k \binom{k}{j} \| S^{\otimes (k-j)} \otimes T^{\otimes j} \|_{ \e_{k-j}(X) \otimes_\e \e_j(X) \to \pi_{k-j}(Y) \otimes_\pi \pi_j(Y)} \\
& \leq \sum_{j=0}^k \binom{k}{j}  \|S^{\otimes (k-j)} \|_{ \e_{k-j}(X)\to \pi_{k-j}(Y)} \lb\sum_{i}\|y_i\|_Y\|x^*_i\|_{X^*}\rb^{j} \\
& \leq \left( \tau_\iy(S) + \sum_{i}\|y_i\|_Y\|x^*_i\|_{X^*} \right)^k .
\end{align*}
We are done by taking $k \to \iy$ and optimizing over nuclear decompositions of $T$.
\end{proof}

\subsection{Is the \name multiplicative?}\label{sec:submultiplicativity}

Another natural question is whether the quantities $\tau_\infty$ and $\rho_\infty$ are multiplicative under taking injective or projective tensor products. The examples for which we know $\rho_\infty$ immediately give a counterexample:
\[
\rho_\infty(\ell^d_2\otimes_\pi \ell^d_2) = \rho_\infty(S^d_1) = d^{3/2}  < d^2 = \rho_\infty(\ell^d_2)^2 .
\]
However, it turns out that $\tau_\infty$ and $\rho_\infty$ are submultiplicative in the following sense:
\begin{prop}
Let $X, Y$ be finite dimensional normed spaces and $T \in L(X,Y)$. Consider a crossnorm $\otimes_\alpha$ on $X \otimes X$ and a crossnorm $\otimes_\beta$ on $Y \otimes Y$. Then
\[
\tau_\infty\lb \lbr T\otimes T:X\otimes_\alpha X \ra Y\otimes_\beta Y\rbr \rb \leq \tau_\infty(T)^2 ,
\]
In particular, we have
\[
\rho_\infty(X\otimes_\alpha X)\leq \rho_\infty(X)^2 ,
\]
for any tensor norm $\otimes_\alpha$.
\label{prop:Submultiplicativity}
\end{prop}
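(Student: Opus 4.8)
The plan is to factor $(T\otimes T)^{\otimes k}$, viewed as an operator from $\e_k(X\otimes_\alpha X)$ to $\pi_k(Y\otimes_\beta Y)$, through $T^{\otimes 2k}:\e_{2k}(X)\to\pi_{2k}(Y)$, and then combine the ideal property of the operator norm with the existence of $\tau_\infty$ from Lemma~\ref{lem:subadditivity}. Throughout I identify $(X\otimes X)^{\otimes k}$ with $X^{\otimes 2k}$ via the canonical reordering of tensor factors, the $i$-th copy of $X\otimes X$ becoming the $(2i-1)$-st and $2i$-th copies of $X$; under this identification $(T\otimes T)^{\otimes k}$ becomes $T^{\otimes 2k}$, and likewise on the $Y$ side.

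First I would check that the formal identity $\e_k(X\otimes_\alpha X)\to\e_{2k}(X)$ is a contraction. If $\lambda_1,\dots,\lambda_{2k}\in B_{X^*}$, then grouping the $\lambda_i$ in consecutive pairs and using the crossnorm property of $\alpha$ on functionals gives $\|\lambda_{2j-1}\otimes\lambda_{2j}\|_{(X\otimes_\alpha X)^*}=\|\lambda_{2j-1}\|_{X^*}\|\lambda_{2j}\|_{X^*}\leq 1$, so that $\lambda_1\otimes\cdots\otimes\lambda_{2k}$, read as a functional on $(X\otimes X)^{\otimes k}$, is a tensor product of $k$ functionals in $B_{(X\otimes_\alpha X)^*}$. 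Hence $|(\lambda_1\otimes\cdots\otimes\lambda_{2k})(z)|\leq\|z\|_{\e_k(X\otimes_\alpha X)}$ for every $z$, and taking the supremum over the $\lambda_i$ yields $\|z\|_{\e_{2k}(X)}\leq\|z\|_{\e_k(X\otimes_\alpha X)}$. Dually, the formal identity $\pi_{2k}(Y)\to\pi_k(Y\otimes_\beta Y)$ is a contraction: a decomposition $w=\sum_i y_i^{(1)}\otimes\cdots\otimes y_i^{(2k)}$ in $Y^{\otimes 2k}$ becomes, after grouping factors in pairs, a decomposition of $w$ in $(Y\otimes_\beta Y)^{\otimes k}$ whose cost $\sum_i\prod_j\|y_i^{(2j-1)}\otimes y_i^{(2j)}\|_\beta$ equals $\sum_i\prod_l\|y_i^{(l)}\|_Y$ by the crossnorm property of $\beta$; taking the infimum gives $\|w\|_{\pi_k(Y\otimes_\beta Y)}\leq\|w\|_{\pi_{2k}(Y)}$. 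Only the defining crossnorm equalities and their dual counterparts are used here; equivalently one could invoke the standard bounds $\e\leq\alpha\leq\pi$.

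With these two facts in hand, the operator $(T\otimes T)^{\otimes k}:\e_k(X\otimes_\alpha X)\to\pi_k(Y\otimes_\beta Y)$ is the composition of the contraction $\e_k(X\otimes_\alpha X)\to\e_{2k}(X)$, the operator $T^{\otimes 2k}:\e_{2k}(X)\to\pi_{2k}(Y)$, and the contraction $\pi_{2k}(Y)\to\pi_k(Y\otimes_\beta Y)$. Submultiplicativity of the operator norm gives $\tau_k(T\otimes T)^k\leq\|T^{\otimes 2k}\|_{\e_{2k}(X)\to\pi_{2k}(Y)}=\tau_{2k}(T)^{2k}$, hence $\tau_k(T\otimes T)\leq\tau_{2k}(T)^2$. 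Letting $k\to\infty$ and using that $\tau_{2k}(T)\to\tau_\infty(T)$ yields $\tau_\infty(T\otimes T)\leq\tau_\infty(T)^2$, which is the first claim. The ``in particular'' statement follows by taking $X=Y$, $\beta=\alpha$ (a tensor norm restricts to a crossnorm on $X\otimes X$), and $T=\ident_X$, since then $\tau_\infty(\ident_X\otimes\ident_X)=\rho_\infty(X\otimes_\alpha X)$ and $\tau_\infty(\ident_X)=\rho_\infty(X)$.

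I do not expect a substantial obstacle here; the only thing requiring care is the tensor-factor bookkeeping — using the pairing of factors consistently on the injective (dual-functional) side and on the projective (decomposition) side, and verifying that $(T\otimes T)^{\otimes k}$ genuinely becomes $T^{\otimes 2k}$ under the reordering isomorphism. Everything else is immediate from the definitions of the injective and projective norms, of crossnorms, and from Lemma~\ref{lem:subadditivity}.
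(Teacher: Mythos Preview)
Your proof is correct and follows essentially the same route as the paper: both arguments establish $\tau_k(T\otimes T)^k\leq\tau_{2k}(T)^{2k}$ by comparing $\e_k(X\otimes_\alpha X)$ with $\e_{2k}(X)$ and $\pi_k(Y\otimes_\beta Y)$ with $\pi_{2k}(Y)$, then let $k\to\infty$. The paper packages the two norm comparisons into a separate lemma (comparing any crossnorm to the extremal $\e$ and $\pi$, then using associativity of the latter), whereas you prove them inline directly from the crossnorm axioms; this is a purely cosmetic difference.
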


We start with the following lemma, whose proof follows by using the metric mapping property of $\pi_k$ and $\e_k$.

\begin{lem}
For finite-dimensional normed spaces $X$ and $Y$, we have 
\[
\|\cdot\|_{\pi_k(X\otimes_\alpha Y)} \leq \|\cdot\|_{\pi_k(X\otimes_\pi Y)} \text{ and } \|\cdot\|_{\e_k(X\otimes_\alpha Y)} \geq \|\cdot\|_{\e_k(X\otimes_\e Y)},
\]
for every crossnorm $\otimes_\alpha$ on $X \otimes X$.
\label{lem:MixedTP}
\end{lem}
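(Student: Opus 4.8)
The plan is to deduce both inequalities from the classical fact that every crossnorm lies between the injective and projective norms, combined with the metric mapping property of the tensor norms $\pi_k$ and $\e_k$.

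First I would record the sandwich estimate
\[ \|\cdot\|_{\e(X \otimes Y)} \;\leq\; \|\cdot\|_{\alpha} \;\leq\; \|\cdot\|_{\pi(X \otimes Y)}, \]
valid for any crossnorm $\alpha$ on $X \otimes Y$. The upper bound follows from the triangle inequality together with $\|x \otimes y\|_\alpha = \|x\|_X \|y\|_Y$ applied to an arbitrary representation $z = \sum_i x_i \otimes y_i$ and then optimized. For the lower bound, given $x^* \in B_{X^*}$ and $y^* \in B_{Y^*}$, the dual part of the crossnorm condition gives $\|x^* \otimes y^*\|_{\alpha^*} = \|x^*\|_{X^*}\|y^*\|_{Y^*} \leq 1$, hence $|(x^* \otimes y^*)(z)| \leq \|z\|_\alpha$; taking the supremum over all such functionals recovers $\|z\|_{\e(X\otimes Y)}$.

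Next I would reinterpret this as a bound on formal identity maps: the operators $\ident : X \otimes_\pi Y \to X \otimes_\alpha Y$ and $\ident : X \otimes_\alpha Y \to X \otimes_\e Y$ both have operator norm at most $1$. Since $\pi$ and $\e$ are tensor norms, the metric mapping property (in its $k$-fold form, obtained from the bilinear case in Section \ref{sec:notation} by associativity) shows that $\ident^{\otimes k} : \pi_k(X \otimes_\pi Y) \to \pi_k(X \otimes_\alpha Y)$ and $\ident^{\otimes k} : \e_k(X \otimes_\alpha Y) \to \e_k(X \otimes_\e Y)$ also have operator norm at most $1$. As $\ident^{\otimes k}$ is the identity on the common underlying space $(X \otimes Y)^{\otimes k}$, these two bounds are precisely the two asserted inequalities.

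No genuine difficulty arises; the only point worth spelling out is the passage from the bilinear metric mapping property to its $k$-fold version. If one prefers to avoid invoking it, one can argue directly: for the $\pi_k$-inequality, apply $\ident^{\otimes k}$ term by term to a near-optimal elementary decomposition of $z$ in $\pi_k(X \otimes_\pi Y)$; for the $\e_k$-inequality, use that the adjoint of $\ident^{\otimes k}$ sends product functionals on $(X \otimes_\e Y)^{\otimes k}$ to product functionals on $(X \otimes_\alpha Y)^{\otimes k}$ of no larger norm.
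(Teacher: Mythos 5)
Your argument is correct and follows exactly the route the paper intends: the paper omits the proof, remarking only that it "follows by using the metric mapping property of $\pi_k$ and $\e_k$", and your proposal carries this out by sandwiching $\alpha$ between $\e$ and $\pi$ and applying the metric mapping property to the formal identity maps. The extra details you supply (the sandwich estimate and the reduction to norm-one identity operators) are exactly the ones the paper leaves implicit.
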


We can now present the proof of Proposition \ref{prop:Submultiplicativity}:

\begin{proof}[Proof of Proposition \ref{prop:Submultiplicativity}]
Let $T:X\ra Y$ denote a linear operator and consider Banach space tensor products $\otimes_\alpha$ and $\otimes_\beta$. In the following, we consider the operator $T\otimes T:X\otimes_\alpha X\ra Y\otimes_\beta Y$. For every $k\in \N$ we have
\begin{align*}
\tau_k(T\otimes T)^k &= \sup_{z\in (X\otimes X)^{\otimes k}} \frac{\|(T\otimes T)^{\otimes k}(z)\|_{\pi_k(Y\otimes_\alpha Y)}}{\|z\|_{\e_k(X\otimes_\beta X)}}\\
&\leq \sup_{z\in (X\otimes X)^{\otimes k}} \frac{\|(T\otimes T)^{\otimes k}(z)\|_{\pi_k(Y\otimes_\pi Y)}}{\|z\|_{\e_k(X\otimes_\e X)}} = \tau_{2k}(T)^{2k}.
\end{align*}
Taking the $k$th root and the limit $k\ra \infty$ finishes the proof.
\end{proof}

Note that $X\otimes_\alpha X$ has enough symmetries for any tensor norm $\alpha$ and any $X$ with enough symmetries (see~\cite[p.~62]{junge2021geometry}). Combining this fact with the previous proposition and with Theorem \ref{thm:enough-symmetries} leads to an estimate of independent interest:

\begin{cor} \label{cor:distance-tensor-product}
Let $X$ be a $n$-dimensional normed space with enough symmetries and $\alpha$ a tensor norm. Then
\[ \mathrm{d}_{X \otimes_\alpha X} \geq \mathrm{d}_X^2. \]
\end{cor}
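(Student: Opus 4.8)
The plan is to chain together the three main results already established, using that $X \otimes_\alpha X$ is again a space with enough symmetries.

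First I would record the dimension count: if $\dim X = n$, then $\dim(X \otimes_\alpha X) = n^2$. Next, I would invoke the cited fact that $X \otimes_\alpha X$ has enough symmetries whenever $X$ does and $\alpha$ is a tensor norm (see~\cite[p.~62]{junge2021geometry}). Since spaces with enough symmetries have proportional John and Loewner ellipsoids, Theorem~\ref{thm:enough-symmetries} applies both to $X$ and to $X \otimes_\alpha X$, giving
\[ \rho_\infty(X) = \frac{n}{\mathrm{d}_X} \qquad \text{and} \qquad \rho_\infty(X \otimes_\alpha X) = \frac{n^2}{\mathrm{d}_{X \otimes_\alpha X}}. \]

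Then I would apply the submultiplicativity bound from Proposition~\ref{prop:Submultiplicativity}, namely $\rho_\infty(X \otimes_\alpha X) \leq \rho_\infty(X)^2$, and substitute the two formulas above to obtain
\[ \frac{n^2}{\mathrm{d}_{X \otimes_\alpha X}} \leq \left( \frac{n}{\mathrm{d}_X} \right)^2 = \frac{n^2}{\mathrm{d}_X^2}. \]
Rearranging yields $\mathrm{d}_{X \otimes_\alpha X} \geq \mathrm{d}_X^2$, as claimed.

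There is no real obstacle here: the statement is a direct corollary, and the only point requiring slight care is checking that the hypotheses of Theorem~\ref{thm:enough-symmetries} are met for $X \otimes_\alpha X$ — which is exactly what the "enough symmetries are preserved under tensor norms" fact guarantees, together with the observation that enough symmetries forces the John and Loewner ellipsoids to be proportional. Everything else is bookkeeping with the dimension and the two explicit formulas for $\rho_\infty$.
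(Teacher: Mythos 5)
Your proof is correct and follows exactly the paper's argument: the corollary is stated in the paper immediately after the observation that $X \otimes_\alpha X$ inherits enough symmetries, and is obtained precisely by combining Theorem~\ref{thm:enough-symmetries} (applied to both $X$ and $X \otimes_\alpha X$) with Proposition~\ref{prop:Submultiplicativity}. Nothing is missing.
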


A natural question is whether the assumption that $X$ has enough symmetries can be removed. More generally, we could ask if for finite-dimensional normed spaces $X$ and $Y$ the inequality
\[ \mathrm{d}_{X \otimes_\alpha Y} \geq \mathrm{d}_X \mathrm{d}_Y \]
holds for every tensor norm $\alpha$.

\subsection{When is the \name minimal?}\label{sec:Minimality}

Another natural problem is to determine the $n$-dimensional spaces for which the \name equals $\sqrt{n}$, the minimal possible value. This is achieved by both $\ell_1^n$ and $\ell_{\infty}^n$, but there are many more examples: The construction from \cite[Example 1]{ATT05} produces a continuum of normed spaces $(\R^n,\|\cdot\|)$ for which $B_2^n$ is the John ellipsoid and $\sqrt{n}B_2^n$ is the Loewner ellipsoid; by Theorem \ref{thm:enough-symmetries}, each such space has a \name equal to $\sqrt{n}$. Other examples can be produced by using Corollary \ref{cor:distance-tensor-product}: For any $n\in \N$ and any tensor norm $\alpha$, consider the space $X = \ell_1^n \otimes_\alpha \ell_1^n$. The space $X$ has enough symmetries and satisfies $\mathrm{d}_X= n = \sqrt{\mdim(X)}$, and we conclude by Theorem \ref{thm:enough-symmetries} that $\rho_\infty(X)=\sqrt{\mdim(X)}$. We should also note that Corollary \ref{cor:distance-tensor-product} produces many examples of spaces with maximal distance to Euclidean in this way.      

Another question concerns the lower bound
\[ \rho_{\infty}(X) \geq \frac{n}{\mathrm{d}_X}, \]
which holds for every $n$-dimensional normed space $X$: Are there spaces for which this inequality is strict?

\section{Infinite dimensions} \label{sec:infinite}

While we focused exclusively on finite-dimensional normed spaces, the \names also make sense for infinite-dimensional Banach spaces. Accordingly, given Banach spaces $X$, $Y$ and $T :X \to Y$ a bounded linear operator, we may define $\tau_k(T)$ for $k \in \N \cup \{+\iy\}$ exactly as in the finite-dimensional case (note that in that, case, the supremum in \eqref{eq:def-tau} is taken over $z$ in the algebraic product $X^{\otimes k}$). The elementary inequality $\tau_\infty(T) \leq \|T\|_N$ holds in generality (the nuclear norm $\|\cdot\|_N$ is defined in \cite[p.41]{Ryan02}), and nuclear operators have a finite \name. The class of operators for which $\tau_k$ is finite (at fixed $k$) has been discussed in a series of papers by John \cite{KJohn1,KJohn2,KJohn3}. 

Whenever $X$ is an infinite-dimensional Banach space, we have $\rho_3(X) = +\iy$ (i.e., the injective and projective norms are not equivalent on $X \otimes X \otimes X$) and therefore $\rho_{\infty}(X)=+\iy$. An argument for this is given in \cite[4.5]{KJohn4}. This has to be compared with the famous example by Pisier \cite{Pisier83} of an infinite-dimensional space $X$ for which $\rho_2(X) < +\iy$, answering a question by Grothendieck \cite{Grothendieck53}.

We now explain how Theorem \ref{thm:tauRedIdentity} translates to the infinite-dimensional setting. Consider two Banach spaces $X$ and $Y$ and assume that one of them is a Hilbert space. Let $T : X \to Y$ be a bounded operator. An alternative description of $\|T\|_{N}$ can be given by trace duality: it is equal to the \emph{integral norm} of $T$, defined as 
\begin{equation} \label{def:integral-norm} \|T\|_{I} = \sup \{ \Tr \lbr QT\rbr : \|Q\|_{Y \to X} \leq 1, \ \mathrm{rank}(Q) < \iy \}. \end{equation}
We point out that, for general Banach spaces, the integral and nuclear norm are not equal. However, both quantities coincide when $X$ or $Y$ is a Hilbert space (or, more generally, when $X^*$ or $Y$ has the metric approximation property, see \cite[Corollary 4.17]{Ryan02}).

\begin{thm} \label{thm:infinite-dim}
Let $H$ be a Hilbert space, $X$ be a Banach space. Let $T \in L(X,H)$ and $S \in L(H,X)$ be bounded operators. Then $\tau_\infty(T) = \|T\|_N$ and $\tau_\infty(S) = \|S\|_N$.
\end{thm}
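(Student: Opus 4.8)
The plan is to follow the proof of Theorem~\ref{thm:tauRedIdentity}, but to replace its twirling step by a reduction to a finite-dimensional Euclidean subspace of $H$. The inequalities $\tau_\infty(T)\le\|T\|_N$ and $\tau_\infty(S)\le\|S\|_N$ hold in general, so only the reverse inequalities require an argument. Since one of the two spaces is a Hilbert space, the nuclear norm coincides with the integral norm \eqref{def:integral-norm} (see \cite[Corollary~4.17]{Ryan02}); thus $\|T\|_N=\|T\|_I$ is the supremum of $\Tr[QT]$ over finite-rank $Q:H\to X$ with $\|Q\|\le 1$, and $\|S\|_N=\|S\|_I$ is the supremum of $\Tr[QS]$ over finite-rank $Q:X\to H$ with $\|Q\|\le 1$. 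It therefore suffices to prove $\Tr[QT]\le\tau_\infty(T)$ (resp. $\Tr[QS]\le\tau_\infty(S)$) for each such $Q$.

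I would treat $T\in L(X,H)$ first. Fix a finite-rank $Q:H\to X$ with $\|Q\|\le 1$ and set $H_0:=(\ker Q)^\perp$, a finite-dimensional — hence Euclidean — subspace of $H$. Writing the orthogonal projection of $H$ onto $H_0$ as $\iota_0\pi_0$, with $\pi_0:H\to H_0$ and $\iota_0:H_0\hookrightarrow H$, we have $Q=(Q\iota_0)\pi_0$ and hence $QT=(Q\iota_0)(\pi_0 T)$ as an operator on $X$. By cyclicity of the trace of finite-rank operators (valid here because $H_0$ is finite-dimensional),
\[ \Tr[QT] \;=\; \Tr_{H_0}\big[(\pi_0 T)(Q\iota_0)\big] \;=\; \Tr_{H_0}[R_0], \qquad R_0:=\pi_0\circ T\circ(Q\iota_0)\in L(H_0,H_0). \]
Two facts now combine. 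First, the ideal property of $\tau_\infty$ (Lemma~\ref{lem:ideal-property}) extends to Banach spaces with an identical proof — it only uses the submultiplicativity of the injective and projective tensor norms under tensoring of operators, which holds for arbitrary Banach spaces — and yields $\tau_\infty(R_0)\le\|\pi_0\|\,\tau_\infty(T)\,\|Q\iota_0\|\le\tau_\infty(T)$, since $\|\pi_0\|\le 1$ and $\|Q\iota_0\|\le\|Q\|\le 1$. Second, because $H_0$ is Euclidean, Theorem~\ref{thm:tauRedIdentity} applies to $R_0:H_0\to H_0$ and gives $\tau_\infty(R_0)=\|R_0\|_{N(H_0\to H_0)}$, while trace duality \eqref{eq:trace-duality} on the finite-dimensional space $H_0$ gives $|\Tr_{H_0}[R_0]|\le\|R_0\|_{N(H_0\to H_0)}$. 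Chaining these,
\[ \Tr[QT] \;\le\; |\Tr_{H_0}[R_0]| \;\le\; \|R_0\|_{N(H_0\to H_0)} \;=\; \tau_\infty(R_0) \;\le\; \tau_\infty(T), \]
and taking the supremum over $Q$ finishes the case of $T$. The case $S\in L(H,X)$ is entirely symmetric: for a finite-rank $Q:X\to H$ with $\|Q\|\le 1$, take $H_0$ to be the (finite-dimensional) range of $Q$ and factor $Q=\iota_0\widetilde Q$ through $H_0$, so that $\Tr[QS]=\Tr_{H_0}[\widetilde Q\,S\,\iota_0]$, and run the same three-step estimate with $R_0:=\widetilde Q\circ S\circ\iota_0\in L(H_0,H_0)$.

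I do not expect a genuine obstacle: the one real idea is that the finite-rankness of the test operator $Q$ in the trace-duality formula confines the entire computation to a finite-dimensional \emph{Euclidean} subspace of $H$ (this is why it is essential to cut down inside $H$ rather than inside $X$), so that the infinite-dimensionality of $H$ is never felt and the already-proven finite-dimensional Theorem~\ref{thm:tauRedIdentity} can be invoked verbatim. The only points requiring care are routine: the extension of the ideal property of $\tau_k$ to Banach spaces; the coincidence of the integral and nuclear norms when a Hilbert space is involved (cited); and the well-definedness and cyclicity of the trace for finite-rank operators between a Banach space and a finite-dimensional Hilbert space.
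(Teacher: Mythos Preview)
Your proof is correct and follows essentially the same approach as the paper: reduce to a finite-dimensional Euclidean subspace $H_0\subset H$ via the finite-rank test operator $Q$, and then invoke the finite-dimensional result. The paper's proof is only a two-line sketch (``mimic the proof of Theorem~\ref{thm:tauRedIdentity}, identifying $TQ$ or $RS$ as an operator on $\ell_2^n$''), and your write-up simply fills in those details carefully; the one cosmetic difference is that you apply Theorem~\ref{thm:tauRedIdentity} to $R_0:H_0\to H_0$ as a black box rather than re-running the twirling argument, which is slightly cleaner.
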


\begin{proof}
For every finite rank operators $Q:H\to X$ and $R : X \to H$ such that $\|Q\| \leq 1$, $\|R\| \leq 1$, we have
\[ \Tr[TQ] \leq \tau_\infty(T) \textnormal{ and } \Tr[RS] \leq \tau_\infty(S). \]
These inequalities are obtained by mimicking the proof of Theorem \ref{thm:tauRedIdentity}, identifying $TQ$ or $RS$ as operator on $\ell_2^n$ for some $n$. The result follows from \eqref{def:integral-norm}.
\end{proof}

\section*{Acknowledgements}
We thank Gilles Pisier for useful comments and for pointing out articles by Kamil John. This work was supported in part by ANR (France) under the grant ESQuisses (ANR-20-CE47-0014-01). We acknowledge funding from the European Union’s Horizon 2020 research and innovation programme under the Marie Sk\l odowska-Curie Action TIPTOP (grant no. 843414).

\bibliographystyle{alpha}
\bibliography{Biblio.bib}

\end{document}